\NeedsTeXFormat{LaTeX2e}
\documentclass[a4paper]{amsart}
\usepackage{ a4wide} 
\usepackage{amssymb}
\usepackage{amsmath, amsthm, amsfonts}
\usepackage{latexsym}
\usepackage{pdfsync}
\usepackage{pdfsync}
\usepackage{xypic}
\usepackage[matrix, arrow]{xy}
\usepackage{array}

\usepackage{fancyhdr}
\usepackage{tabularx}


\theoremstyle{plain}
\newtheorem{theorem}{Theorem}[section]
\newtheorem{corollary}[theorem]{Corollary}
\newtheorem{lemma}[theorem]{Lemma}
\newtheorem*{lemma*}{Lemma}
\newtheorem{proposition}[theorem]{Proposition}
\newtheorem*{proposition*}{Propositon}

\theoremstyle{definition}
\newtheorem{definition}[theorem]{Definition}

\newtheorem{remark}[theorem]{Remark}
\newtheorem*{remark*}{Remark}

\newtheorem{remark/notation}[theorem]{Remark/Notation}
\newtheorem{remark/definition}[theorem]{Remark/Definition}
\newtheorem{remark/example}[theorem]{Remark/Example}
\newtheorem{example}[theorem]{Example}

\newtheorem*{example*}{Example}

\newtheorem*{claim*}{Claim}

\newcommand{\id}{\operatorname{Id}}
\newcommand{\ind}{\operatorname{ind}}
\newcommand{\supp}{\operatorname{supp}}
\newcommand{\res}{\operatorname{res}}

\newcommand{\GL}{\operatorname{GL}}
\newcommand{\Hom}{\operatorname{Hom}}
\newcommand{\Prim}{\operatorname{Prim}}
\newcommand{\Rep}{\operatorname{Rep}}
\newcommand{\Ad}{\operatorname{Ad}}
\newcommand{\ad}{\operatorname{ad}}

\newcommand{\Sp}{\operatorname{Sp}}
\newcommand{\R}{\operatorname{\mathbb{R}}}
\newcommand{\Q}{\operatorname{\mathbb{Q}}}

\newcommand{\Z}{\operatorname{\mathbb{Z}}}
\newcommand{\N}{\operatorname{\mathbb{N}}}

\newcommand{\RR}{\operatorname{\mathbb{R}}}
\newcommand{\QQ}{\operatorname{\mathbb{Q}}}
\newcommand{\CC}{\operatorname{\mathbb{C}}}
\newcommand{\ZZ}{\operatorname{\mathbb{Z}}}
\newcommand{\NN}{\operatorname{\mathbb{N}}}
\newcommand{\TT}{\operatorname{\mathbb{T}}}

\newcommand{\G}{\operatorname{\mathfrak{g}}}
\def\Char{\operatorname{char}}
\def\P{\mathbb P}

\newcommand{\m}{\mathfrak{m}}
\newcommand{\n}{\mathfrak{n}}
\newcommand{\z}{\mathfrak{z}}
\newcommand{\h}{\mathfrak{h}}
\def\a{\mathfrak{a}}
\def\j{\mathfrak{j}}
\newcommand{\rk}{\rangle}
\newcommand{\lk}{\langle}
\newcommand{\B}{\mathcal B}
\newcommand{\U}{\mathcal U}

\def\r{\mathfrak{r}}
\def\s{\mathfrak{s}}
\def\u{\mathfrak{u}}
\def\k{\mathfrak{k}}

\def\K{\mathcal K}
\setcounter{tocdepth}{2}

\begin{document}

\title[Kirillov Theory]
  {A general Kirillov Theory for locally compact nilpotent groups}

\author[Echterhoff]{Siegfried
  Echterhoff}
   \address{Westf\"alische Wilhelms-Universit\"at M\"unster,
  Mathematisches Institut, Einsteinstr. 62 D-48149 M\"unster, Germany}
\email{echters@uni-muenster.de}

  \author[Kl\"uver]{Helma
  Kl\"uver}
  \address{Westf\"alische Wilhelms-Universit\"at M\"unster,
  Mathematisches Institut, Einsteinstr. 62 D-48149 M\"unster, Germany}
\email{Helma$\underline{\ }$Kluever@gmx.de}

\begin{abstract} We discuss a very general Kirillov Theory for the representations of
certain nilpotent groups which gives a combined view an many known examples from the literature.
\end{abstract}

  \thanks{The research for this paper was partially supported by the German Research Foundation
  (SFB 478 and  SFB 878) and the EU-Network Quantum SpacesÐNoncommutative Geometry (Contract
No. HPRN-CT-2002-00280).}
\maketitle

\section{Introduction}
In this paper we want to present a quite general version of Kirillov's orbit method for the representations of second countable locally compact nilpotent groups, which resulted out of an attempt to understand the results of Howe in \cite{How}.
In particular, we wanted to understand Howe's version of a Kirillov theory for unipotent groups 
over function fields with ``small'' nilpotence length, which might be useful in the 
 study of the Baum-Connes conjecture for linear algebraic groups over such fields, following the ideas of \cite{CEN}.
Although we have to admit that we still struggle with some details in  Howe's paper, we learned 
enough from his ideas to find a way to formulate  a quite general version of 
Kirillov's theory which covers a big class of nilpotent locally compact groups, containing 
\begin{enumerate}
\item connected and simply connected real nilpotent Lie groups (the classical situation studied by 
 Kirillov \cite{Kir});
\item unipotent groups over $\QQ_p$ (which have been studied by Moore in \cite{Moo} and by Boyarchenko and Sabitova in \cite{BS});
\item Quasi-$p$ groups with ``small'' nilpotence length 
(studied by Howe in \cite{How}; but see also \cite{BS});
\item countable torsion free divisible groups (which have been studied by Carey, Moran and Pearce in \cite{CMP}).
\end{enumerate}
Following the ideas of Howe in \cite{How} we 
will generalize the notion of a Lie group by generalizing the classical notion of a Lie algebra.
For this we introduce the notion of nilpotent $k$-Lie pairs $(G,\G)$ 
for some $k\in \NN\cup\{\infty\}$ in which
$G$ is a locally compact nilpotent group of nilpotence length $l\leq k$ and $\G$ is a Lie-algebra
over the ring $\Lambda_k:=\ZZ[\frac{1}{k!}]$ if $k\in \NN$ and $\Lambda_k:=\QQ$ if $k=\infty$
such that $G$ and $\G$ can be identified via a
bijective homeomorphism $\exp:G\to \G$ which  satisfies the Campbell-Hausdorff formula.
In order to make things work, we need some other technical 
ingredients, which are explained 
in detail in \S \ref{section-Lie-pair-construction} below.  A very important one is the existence of a
locally compact $\Lambda_k$-module $\mathfrak{m}$ together with a basic character $\epsilon:\mathfrak{m}\to \TT$ such that the dual $\G^*:=\Hom_{\Lambda_k}(\G,\mathfrak{m})$ is isomorphic to 
the Pontrjiagin-dual $\widehat{\G}$ of the (locally compact) Lie algebra $\G$ via the map
$$\G^*\to \widehat{\G};f\mapsto \epsilon\circ f.$$
We then say that $(G,\G)$ is {\em $(\m,\epsilon)$-dualizable}.
For instance, if $G$ is a classical connected and simply connected 
nilpotent Lie-group with Lie-algebra $\G$, then 
$(G,\G)$ becomes an $(\RR,\epsilon)$-dualizable  nilpotent $k$-Lie pair for any $k\geq 2$ 
by taking $\epsilon:\RR\to \TT$ to be the basic character $\epsilon(t)=e^{2\pi it}$ (but any other character
of $\RR$ will do the job, too).

Having these data, then for each $f\in \G^*$ we can define in a more or 
less straightforward way the concept of {\em standard polarizing 
subalgebras} $\mathfrak{r}$ of $\G$ such that $f$ determines a character $\varphi_f\in \widehat{R}$
for $R=\exp(\mathfrak{r})$, and, using  the ideas of Kirillov and Howe, we can show that
the induced representations $\ind_R^G\varphi_f$ are always irreducible.
So far, everything works almost as in the classical Lie group case. But in our general setting, we 
cannot expect that every irreducible representation is induced, and it is also not clear that the induced representation 
$\ind_R^G\varphi_f$ is independent 
of the choice of the standard polarization $\mathfrak{r}$. However, it turns out that the primitive ideal
$P_f:=\ker (\ind_{R}^G\varphi_f)$ in the primitive ideal space $\Prim(C^*(G))$ of the group C*-algebra
$C^*(G)$ of $G$  is independent of the choice of $\mathfrak{r}$. We therefore get
a well-defined {\em Kirillov map}
$$\kappa: \G^*\to \Prim(C^*(G)); f\mapsto P_f=\ker (\ind_{R}^G\varphi_f).$$
There is a canonical coadjoint action $\Ad^*:G\to\GL(\G)$ and it is easily checked that the 
Kirillov map $\kappa$ is constant on $\Ad^*(G)$-orbits in $\G^*$. Moreover, we shall see that 
$\kappa$ is always continuous with respect to the given topology on $\G^*\cong\widehat{\G}$,
so that the Kirillov map will actually be constant on $\Ad^*(G)$-quasi orbits in $\G^*$ (two elements 
$f,f'\in \G^*$ lie in the same $\Ad^*(G)$-quasi orbit if and only if $f\in \overline{\Ad^*(G)f'}$ and 
$f'\in \overline{\Ad^*(G)f}$ --- we then write  $f\sim f'$). 
We therefore obtain a well defined continuous  map
$$\tilde{\kappa}: \G^*\!\!/\!\!\sim\;\to \Prim(C^*(G)); [f]\mapsto P_f,$$
which we call the {\em Kirillov-orbit map}. 
The main results of this paper are the following:

\begin{theorem}\label{thm-surjective}
Let $(G,\G)$ be an $(\m,\epsilon)$-dualizable  nilpotent $k$-Lie-pair. Then the Kirillov-orbit map 
$\tilde{\kappa}: \G^*\!\!/\!\!\sim\;\to \Prim(C^*(G))$ is continuous and surjective.
\end{theorem}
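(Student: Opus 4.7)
The plan is to treat continuity and surjectivity separately, both passing through the continuous surjection from the Fell topology on $\widehat{G}$ to the Jacobson topology on $\Prim C^*(G)$.

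For continuity, the reduction is to choose polarizations in a controlled way as $f$ varies. I would fix a flag $0=\G_0\subset\G_1\subset\cdots\subset\G_n=\G$ of $\Ad(G)$-invariant $\Lambda_k$-submodules and produce for each $f\in\G^*$ a Vergne-type standard polarization $\r_f$ adapted to the flag. Since $P_f$ is independent of the choice of standard polarization, this particular choice suffices. Stratify $\G^*$ by the combinatorial invariant of $\r_f$ (e.g.\ the ranks of $\G_i\cap\r_f$); on each stratum $R_f=\exp(\r_f)$ is locally constant and $\varphi_f$ depends continuously on $f$, so $\kappa$ is continuous on the stratum by continuity of induction in the Fell topology. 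The Jacobson topology is coarse enough that continuity extends across stratum boundaries by standard weak-containment arguments.

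Surjectivity I would attack by induction on the nilpotence length $l$ of $G$. The case $l=1$ is Pontrjagin duality: $\G^*\cong\widehat{\G}\cong\widehat{G}=\Prim C^*(G)$. For the inductive step, let $P=\ker\pi$ with $\pi$ irreducible, and let $Z=\exp(\z)$ be the center of $G$. Schur's lemma forces $\pi|_Z=(\epsilon\circ f_0)\cdot\Id$ for some $f_0\in\z^*$. If $f_0=0$, then $\pi$ descends to the quotient pair $(G/Z,\G/\z)$ of nilpotence length $l-1$, and the inductive hypothesis produces $g\in(\G/\z)^*\hookrightarrow\G^*$ with $P=P_g$. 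Otherwise, using $(\m,\epsilon)$-dualizability to identify $\widehat{\z}\cong\z^*$ and the failure of a chosen extension of $f_0$ to be $\Ad^*(G)$-invariant, Mackey's theorem applied to the central extension $1\to Z\to G\to G/Z\to 1$ presents $\pi$ as $\ind_K^G\sigma$ with $K\subsetneq G$ a closed subgroup containing $Z$ and $\sigma\in\widehat{K}$ irreducible; the inductive hypothesis applied to the sub-pair $(K,\k)$ gives $\sigma=\ind_R^K\varphi_g$ for some polarization $R\subset K$ at $g\in\k^*$, and induction-in-stages exhibits $P=P_g$.

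The main obstacle is the Mackey reduction step. Classically the reduction relies on Stone--von Neumann for the Heisenberg quotient constructed from $f_0$; here one must extract the analogue from $(\m,\epsilon)$-dualizability --- concretely, that the annihilator of $f_0$ in $\G$ is a closed $\Lambda_k$-submodule, that the quotient inherits a Heisenberg-type bilinear form, and that a suitable $K$ with the required properties exists and inherits a $k$-Lie pair structure to which the inductive hypothesis genuinely applies. Compounding this is the non-type-I setting, which forces one to work consistently with primitive ideals (weak containment) rather than with irreducible representations up to unitary equivalence. Once this reduction is in hand, the continuity argument is largely bookkeeping on the Vergne stratification.
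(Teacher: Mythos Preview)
Your surjectivity sketch has the right skeleton (induction on nilpotence length) but the inductive step as written has a real gap. You cannot invoke ``Mackey's theorem'' to produce $K\subsetneq G$ with $\pi\cong\ind_K^G\sigma$: the Mackey machine needs regularity/type~I hypotheses that are absent here, and even granting it, a Mackey stabilizer $K$ need not be exponentiable (so $(K,\k)$ is not a $k$-Lie pair) nor of strictly smaller nilpotence length (so induction does not bite). The paper avoids this by using Howe's specific construction: one first quotients by the largest exponentiable normal $J\subseteq\ker\pi$ so that the central functional becomes \emph{faithful} on $\z$ (your dichotomy $f_0=0$ vs.\ $f_0\neq 0$ is too coarse here), then takes a maximal abelian $A\subseteq Z_2(G)$ and its centralizer $N$. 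This $N$ is automatically exponentiable, normal, and of nilpotence length $\leq l-1$, and the faithful central character makes the bicharacter $G/N\times A/Z\to\TT$ nondegenerate, which is what replaces Stone--von~Neumann and yields $\ker\pi=\ker(\ind_N^G\rho)$ for some $\rho\in\widehat N$ at the primitive-ideal level. You correctly flag this reduction as the main obstacle, but the fix is structural (the $A$--$N$ construction), not an appeal to abstract Mackey theory.

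For continuity, your Vergne-flag/stratification plan is more elaborate than necessary and rests on steps that are not obviously available over $\Lambda_k$ (existence of Vergne-type polarizations, local constancy of $R_f$ on strata). The paper's argument is much lighter: given $f_n\to f$, pick \emph{any} standard polarizations $\r_n$, use compactness of the Fell subgroup space $\mathcal K(\G)$ to pass to a subsequence with $\r_n\to\s$; then $\s$ is automatically $f$-subordinate, $(R_n,\varphi_{f_n})\to(S,\varphi_f)$ in $\mathcal S(G)$, and continuity of induction gives $\ind_{R_n}^G\varphi_{f_n}\to\ind_S^G\varphi_f$. The one nontrivial ingredient is that $\ind_R^G\varphi_f\prec\ind_S^G\varphi_f$ for any $f$-subordinate $\s$, which the paper proves separately; this is what lets you avoid controlling the polarizations as $f$ varies.
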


Unfortunately, we can show injectivity of the Kirillov-orbit map only under the additional assumption, 
that the sum $\h+\n$ of two closed subalgebras $\h,\n\subseteq \G$ with $[\h,\n]\subseteq \n$ is again closed in $\G$. If this is satisfied, we say that $(G,\G)$ is {\em regular}.  It is easy to check that all examples mentioned in the above list are regular. For regular Lie pairs $(G,\G)$ we can use ideas of Joy \cite{Joy} to prove

\begin{theorem}\label{thm-bijective}
Suppose that $(G,\G)$ is a regular  $(\m,\epsilon)$-dualizable 
nilpotent $k$-Lie-pair. Then the Kirillov-orbit map
$\tilde{\kappa}: \G^*\!\!/\!\!\sim\;\to \Prim(C^*(G))$
is a homeomorphism.
\end{theorem}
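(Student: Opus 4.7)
\medskip
\noindent\textbf{Proof plan.}
The plan is to supplement Theorem~\ref{thm-surjective} with two additional facts: injectivity of $\tilde\kappa$, and openness of $\tilde\kappa$ onto $\Prim(C^*(G))$. Since both $\G^*/\!\!\sim$ and $\Prim(C^*(G))$ are $T_0$, a continuous bijection between them is a homeomorphism exactly when it is open, so these two ingredients suffice. Regularity of $(G,\G)$ enters crucially in the injectivity step, where sums $\h+\n$ with $[\h,\n]\subseteq\n$ appear whenever one modifies or compares standard polarizations.

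I would prove injectivity by induction on the nilpotence length $l$ of $G$. The base $l=1$ is immediate, since then $G$ is abelian, $R=G$ is the only admissible standard polarization, and $\tilde\kappa$ is the Pontrjagin identification $\G^*\cong\widehat G\cong\Prim(C^*(G))$ coming from $f\mapsto\epsilon\circ f$. For the inductive step, assume $P_f=P_{f'}$ for $f,f'\in\G^*$. Because any standard polarization at $f$ contains the center $\z(\G)$ (the skew form $(X,Y)\mapsto f([X,Y])$ vanishes on $\z(\G)$), the central characters of $\ind_R^G\varphi_f$ and $\ind_{R'}^G\varphi_{f'}$ are $\epsilon\circ f|_{\z(\G)}$ and $\epsilon\circ f'|_{\z(\G)}$; since these are detected by the kernel, $P_f=P_{f'}$ forces $f|_{\z(\G)}=f'|_{\z(\G)}$. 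If this common central restriction is trivial, both $f$ and $f'$ descend to the quotient pair $(G/\exp\z(\G),\G/\z(\G))$, which inherits regularity and $(\m,\epsilon)$-dualizability and has strictly smaller nilpotence length, and the inductive hypothesis applies after lifting the resulting quasi-orbit equivalence back to $\G^*$. Otherwise, I would choose a nontrivial closed $\Lambda_k$-subalgebra $\n\subseteq\z(\G)$ on which $f$ (and hence $f'$) is nonzero, and apply the Mackey machine to the central extension $\exp\n\hookrightarrow G\twoheadrightarrow G/\exp\n$; together with the standard Vergne-type replacement of an arbitrary polarization $\r$ by one containing $\n$, this reduces the problem to a pair of smaller nilpotence length. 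The Vergne replacement involves precisely sums $\h+\n$ with $[\h,\n]\subseteq\n$, and regularity ensures these are again closed subalgebras of $\G$ to which the formalism of induced characters applies. This is the point at which the ideas of Joy \cite{Joy} are invoked.

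For the openness of $\tilde\kappa$, I would use the fact that closure in each of $\G^*/\!\!\sim$ and $\Prim(C^*(G))$ has a transparent description: the closure of $[f]$ is the image of $\overline{\Ad^*(G)f}$, while the closure of $P_f$ consists of all primitive ideals containing $P_f$. Continuity from Theorem~\ref{thm-surjective} gives the inclusion $\tilde\kappa(\overline{\{[f]\}})\subseteq\overline{\{P_f\}}$, while injectivity combined with Fell-continuity of the induction functor produces the reverse inclusion. Hence $\tilde\kappa$ is closure-preserving, and being a continuous bijection of $T_0$-spaces is therefore a homeomorphism.

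The main obstacle is the non-central inductive step for injectivity: one must recover the $\Ad^*(G)$-quasi-orbit of $f$ from the primitive ideal $P_f$ after descending through the central subalgebra $\n$. In the classical simply connected Lie setting this is handled using smoothness of coadjoint orbits and analytic manipulations, neither of which survives in the general locally compact framework. The substitute, following Howe \cite{How} and Joy \cite{Joy}, is a purely algebraic induction combined with careful control of sums of closed subalgebras~--- and this is exactly why the regularity hypothesis, which asserts that such sums stay closed, is indispensable.
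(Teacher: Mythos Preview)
Your openness argument has a genuine gap. You write that ``injectivity combined with Fell-continuity of the induction functor produces the reverse inclusion'' $\overline{\{P_f\}}\subseteq\tilde\kappa(\overline{\{[f]\}})$, but this does not follow. Continuity of induction only tells you that convergence in $\G^*$ implies convergence of the induced primitive ideals; it says nothing about the converse. Concretely, you must show that if $P_{f'}\supseteq P_f$ (so $\ind_{R'}^G\varphi_{f'}\prec\ind_R^G\varphi_f$) then $f'\in\overline{\Ad^*(G)f}$. Injectivity alone cannot deliver this: knowing that distinct quasi-orbits map to distinct primitive ideals gives no control over limit behaviour. The paper does \emph{not} separate injectivity and openness in the way you propose. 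Instead, following Joy, it proves a single statement (Proposition~\ref{prop-open}) from which both follow: if $\kappa(f_n)\to\kappa(f)$ in $\Prim(C^*(G))$, then after passing to a subsequence there are $x_n\in G$ with $\Ad^*(x_n)f_n\to f$. Applied to a convergent sequence this gives openness; applied to the constant sequence $f_n=f'$ with $\kappa(f')=\kappa(f)$ it gives injectivity. The engine behind this is Fell's subgroup-representation space $\mathcal S(G)$: one tracks the convergence $(R_n,\varphi_{f_n})\to(S,\varphi_f)$ through the tower $G=G_n^0\supseteq G_n^1\supseteq\cdots\supseteq G_n^m=R_n$ arising from the standard polarization construction (Lemma~\ref{lem-open}), using continuity of induction \emph{and} restriction on $\mathcal S(G)$ at each stage. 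Regularity enters in Lemma~\ref{inducing-pair-lemma}, where one needs $HR$ closed (with $H$ normal, $G/H$ abelian, $R=\exp(\r)$ a standard polarization) to conclude $\rho\prec\pi|_H$ for $\rho\sim\ind_R^H\varphi_f$, $\pi\sim\ind_R^G\varphi_f$.

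Your injectivity sketch also has an unclear step. In the non-trivial-central-character case you propose to pass to $G/\exp\n$ for some $\n\subseteq\z(\G)$ on which $f$ is nonzero, but then $f$ does not descend to $(\G/\n)^*$, and even if you instead invoke the Mackey machine relative to the central subgroup $\exp\n$, the quotient $G/\exp\n$ need not have smaller nilpotence length. What actually reduces the length is passing to the centraliser $N$ of a maximal abelian $A\subseteq Z_2(G)$ after first dividing out the largest ideal in $\ker f$; but then one must compare the restrictions $\pi|_N$ and $\pi'|_N$, and this is exactly where the paper's argument via $\mathcal S(G)$ and Lemma~\ref{inducing-pair-lemma} does the work you are missing.
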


Of course, having these results, it is interesting to know under which conditions the Kirillov orbit method 
not only computes the primitive ideals  but also the irreducible representations of  $G$. Recall that it follows from Glimm's famous theorem (e.g., see \cite[Chapter 12]{Dix}) that an irreducible $*$-representation $\pi:A\to \B(H_\pi)$ of a separable C*-algebra $A$ is completely determined by its kernel 
$\ker\pi\in \Prim(A)$ if and only if its image contains the compact operators $\K(H_\pi)$. We call such representations {\em GCR-representations}.
If, moreover, $\pi(A)$ is equal to $\K(H_\pi)$, we say that  $\pi$ is a {\em CCR-representation}.
Glimm's result also implies that $\pi$ is GCR (resp. CCR) if and only if the point-set $\{\pi\}$ is locally closed\footnote{Recall that a subset $Y$ of a topological space $X$ is called {\em locally closed} if $Y$ is open in its closure $\overline{Y}$.}
(resp. closed) in $\widehat{A}$. For representations of a second countable locally compact group $G$, these notations carry over by the canonical identification of the unitary representations of $G$ with the $*$-representations of the group C*-algebra $C^*(G)$. We are able to show:

\begin{theorem}\label{thm-GCR}
Suppose that $(G,\G)$ is a regular $(\m,\epsilon)$-dualizable 
$k$-Lie pair. Let $f\in \G^*$  such that $\Ad^*(G)f$ is locally closed (resp. closed) in $\G^*$. Then $\pi_f=\ind_R^G\varphi_f$ is GCR (resp. CCR), where $R=\exp(\r)$ for a standard $f$-polarizing subalgebra  $\r$ of $\G$. In particular, if all $\Ad^*(G)$-orbits are locally closed in $\G^*$, we obtain a well defined homeomorphism
$$\widehat{\kappa}:\G^*/\Ad^*(G)\to \widehat{G}; \; \Ad^*(G)f\mapsto\pi_f$$
\end{theorem}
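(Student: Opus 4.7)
My strategy is to combine the homeomorphism $\tilde\kappa\colon\G^*\!\!/\!\!\sim\;\to\Prim(C^*(G))$ from Theorem~\ref{thm-bijective} with Glimm's criterion (reviewed above) that $\pi_f$ is GCR (resp.\ CCR) exactly when the singleton $\{P_f\}$ is locally closed (resp.\ closed) in the spectrum. A first preparatory step is to observe that if $O:=\Ad^*(G)f$ is locally closed in $\G^*$, the quasi-orbit $[f]$ already coincides with $O$: for any $f'\in[f]$ the orbit $\Ad^*(G)f'$ is dense in $\overline{O}$, so it must meet the non-empty relatively open subset $O\subseteq\overline{O}$, whence $\Ad^*(G)f'=O$; the argument simplifies further when $O$ is closed.

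The crucial step is showing that local closedness of $O$ in $\G^*$ descends to local closedness of $\{[f]\}$ in $\G^*\!\!/\!\!\sim$. For this I would first verify that $\overline{O}$ is $\sim$-saturated: any $g\in\overline{O}$ satisfies $\overline{\Ad^*(G)g}\subseteq\overline{O}$, so any $g'\sim g$ lies in $\overline{\Ad^*(G)g'}=\overline{\Ad^*(G)g}\subseteq\overline{O}$. Writing $q\colon\G^*\to\G^*\!\!/\!\!\sim$ for the quotient map, this gives $q(\overline{O})$ closed in the quotient and equal to $\overline{\{[f]\}}$. Combined with the identity $[f]=O$ from paragraph one, the difference $\overline{O}\setminus O$ is closed and $\sim$-saturated, so its complement in $\G^*$ is a saturated open set whose image in $\G^*\!\!/\!\!\sim$ meets $\overline{\{[f]\}}$ exactly in $\{[f]\}$; in the closed case $\overline{O}=O$ and $\{[f]\}$ is already closed. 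Transporting through $\tilde\kappa$ and invoking Glimm's theorem then gives that $\pi_f$ is GCR (resp.\ CCR).

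For the final assertion, suppose every $\Ad^*(G)$-orbit is locally closed. By the preceding argument every $\pi_f$ is GCR, and since by Theorem~\ref{thm-surjective} every primitive ideal arises as some $P_f$, every irreducible representation of $C^*(G)$ is GCR; that is, $C^*(G)$ is of type~I. The canonical map $\widehat G\to\Prim(C^*(G))$, $\pi\mapsto\ker\pi$, is then a homeomorphism by classical theory (cf.\ \cite{Dix}). Orbits and quasi-orbits coincide in this setting by paragraph one, so $\G^*/\Ad^*(G)$ and $\G^*\!\!/\!\!\sim$ agree as topological quotients, and $\widehat\kappa$ is the composition of $\tilde\kappa$ with the inverse of this canonical homeomorphism; well-definedness follows since $\pi_f$ and $\pi_{\Ad^*(g)f}$ have the same kernel and, being GCR, must be unitarily equivalent. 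The main technical delicacy lies in the saturation argument of paragraph two; everything else is a formal consequence of Theorem~\ref{thm-bijective}, Glimm's theorem, and the standard theory of type~I C*-algebras.
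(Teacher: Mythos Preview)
Your argument contains a genuine gap at the step ``Transporting through $\tilde\kappa$ and invoking Glimm's theorem then gives that $\pi_f$ is GCR (resp.\ CCR).'' The homeomorphism $\tilde\kappa$ lands in $\Prim(C^*(G))$, so what you have actually established is that $\{P_f\}$ is locally closed (resp.\ closed) in $\Prim(C^*(G))$. But Glimm's criterion, as quoted in the introduction, characterizes GCR (resp.\ CCR) representations by local closedness (resp.\ closedness) of $\{\pi_f\}$ in the \emph{spectrum} $\widehat{G}$, not of $\{P_f\}$ in $\Prim(C^*(G))$. These conditions are not equivalent in general: for any simple separable non-type-I $C^*$-algebra $A$ (e.g.\ the CAR algebra) one has $\Prim(A)=\{0\}$, a single point which is trivially both closed and locally closed, yet no irreducible representation of $A$ is GCR. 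Concretely, $\{P\}$ locally closed in $\Prim(A)$ yields a simple separable subquotient $J$ of $A$ whose irreducible representations are in bijection with the irreducibles of $A$ having kernel $P$; but $\widehat{J}$ is a singleton only when $J$ is elementary, which is exactly the GCR conclusion you are after. So the passage from $\Prim$ to $\widehat{G}$ is circular.

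The paper's proof (Theorem~\ref{theorem-GCR}) avoids this by an induction on the grade $m(f,\r)$ of the standard polarization. After reducing to the case where $f$ is faithful on $\z(\G)$, one takes the first subgroup $N=G_1$ in the chain $G\supseteq G_1\supseteq\cdots\supseteq R$ and invokes a crossed-product result (Lemma~\ref{lem-GCR}) saying that $\ind_N^G\rho$ is GCR precisely when $\rho=\ind_R^N\varphi_f$ is GCR and the $G$-orbit of $\rho$ is locally closed in $\widehat{N}$. The inductive hypothesis (applied to $(N,\n)$, where the grade has dropped) handles $\rho$, and a separate argument shows that local closedness of $\Ad^*(G)f$ in $\G^*$ descends to local closedness of $G\cdot\rho$ in $\widehat{N}$. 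This representation-theoretic induction is what supplies the missing bridge between $\Prim$ and $\widehat{G}$; your saturation argument in paragraph two and your treatment of the final homeomorphism assertion are fine once the first part is in place.
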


The paper is organized as follows: after this introductory section we start in \S \ref{sec-prel} with some preliminaries on induced representations of groups and the Fell-topology on the spaces of all equivalence classes of unitary representations. After that, in \S \ref{sec-Howe} we proceed with some material on representations of general nilpotent groups and, in particular, two-step nilpotent groups, which provides the base for the following sections. Most of the material in that section is based on work of Howe in \cite{How}
and might be well-known to the experts, but for completeness and for the readers convenience  we decided to present complete proofs. In \S \ref{section-Lie-pair-construction} we introduce our notion of 
$(\m,\epsilon)$-dualizable $k$-Lie pairs, and we present some of the basic properties, before we 
construct the Kirillov map in \S \ref{section-Kirillov-map}. The main results on (bi)-continuity and bijectivity of the Kirillov-orbit map are presented in \S \ref{section-the-kirillov-homeomorphism}. 
In  \S \ref{sec-GCR} we give the proof of Theorem \ref{thm-GCR}. 
The examples listed above are discussed in detail in \S \ref{section-examples} before we close this paper with some technical details on the Campbell-Hausdorff formula which are needed in the body of the paper.

This paper is partly based on the doctoral thesis 
of the second named author which was written under the direction of the first named author.

\section{Some preliminaries on induced representations}\label{sec-prel}

Let $G$ be a locally compact group, $H$ a closed
subgroup of $G$, $q:G \rightarrow G/H$ the canonical quotient map,
and $\sigma$ a unitary representation of $H$ on the Hilbert space
$H_{\sigma}$. Let us briefly recall the definition of the induced representation 
$\ind_H^G\sigma$ of $G$, where we want to use Blattner's construction  as 
introduced in \cite{Blat}. As a general reference we refer to Folland's book \cite{Fol}.
Let $\gamma:H\to (0,\infty)$ be given by 
$
\gamma(h)=\left({\Delta_G(h)}/{\Delta_H(h)}\right)^{\frac{1}{2}}
$
where $\Delta_G$ and $\Delta_H$ denote the modular functions of $G$ and $H$, respectively.
Note that nilpotent locally compact groups are unimodular (see \cite[Corollary 2]{How}), so that the function 
$\gamma$ will be trivial in the body of this paper. Let
\begin{align*}
\mathcal{F}_{\sigma}:=\{\xi \in C(G,H_{\sigma}) \mid q(\supp(f))\subseteq G/H &\;\text{is compact, and}\\
& \xi(x h)= \gamma(h^{-1})\sigma(h^{-1})\xi(x) \; \hbox{for} \; x \in G, h \in H  \}.
\end{align*}
If $\beta: G\to [0,\infty)$ is a Bruhat-section for
$G/H$, we may define an inner product $\lk \xi,\eta\rk\in \CC$ for $\xi,\eta\in \mathcal F_\sigma$ by
\begin{equation}\label{definition-pairing-on-F-sigma}
\langle \xi,\eta\rangle := \int_{G} \beta (x) \langle \xi(x), \eta(x)
\rangle_\sigma \; d\mu(x).
\end{equation}
We denote by $H_{\ind \sigma}$ the Hilbert space completion of
$\mathcal{F}_{\sigma}$ with respect to this inner product. The induced unitary  representation
$\ind_H^G\sigma$ of $G$ on $H_{\ind\sigma}$ is then given by
$$\big(\ind_H^G\sigma(x)\xi\big)(y)=\xi(x^{-1}y).$$
\begin{remark}\label{rem-properties-of-induction}
The following basic properties of the induced representation can be found in Folland's book \cite{Fol}. We shall use them throughout without further reference:
\\
{\bf (a)} (Induction in steps) Suppose $L\subseteq H\subseteq G$ are closed subgroups of $G$,
then $\ind_L^G \sigma\cong \ind_H^G( \ind_L^H \sigma)$.
\\
{\bf (b)}
(Induction-restriction)  If $\pi$ is a unitary representation of $G$ and
$\sigma$ a unitary representation of $H\subseteq G$, then
$
\ind_H^G (\pi|_H \otimes \sigma) \cong \pi \otimes \ind_H^G \sigma
$. 
In particular, with  $\sigma=1_H$, we get $\ind_H^G(\pi|_H)\cong \ind_H^G1_H\otimes\pi$.
\\
{\bf (c)}
Let $L$ be a closed normal subgroup of 
$G$ and let $H$ be a closed subgroup of $G$ with $L \subseteq H$. 
Then \begin{equation*}
\ind_H^G (\sigma \circ q) \cong (\ind_{H/L}^{G/L} \sigma) \circ q,
\end{equation*}
for any unitary representation $\sigma$ of $H/L$,
where $q: G \rightarrow G/L$ denotes the quotient map.
\\
{\bf (d)} If $\pi$ is a representation of the closed subgroup $H$ of $G$ and $x\in G$, then 
$\ind_{xHx^{-1}}^G(x\cdot \pi)\cong\ind_H^G\pi$, where $x\cdot \pi$ denotes the unitary representation
of $xHx^{-1}$ defined by $x\cdot \pi(y)=\pi(x^{-1}yx)$.
\end{remark}

If $H$ is a closed subgroup of $G$, then $G$ acts on $C_0(G/H)$ by left translation. 
A covariant representation of the C*-dynamical system $(G, C_0(G/H))$ consists of a pair $(\pi, P)$,
where $\pi:G\to \U(H_\pi)$ is a unitary representation of $G$ and $P:C_0(G/H)\to \B(H_\pi)$ is a non-degenerate $*$-representation on the same Hilbert space such that
$$P(x\cdot \varphi)=\pi(x)P(\varphi)\pi(x^{-1})$$
for all $\varphi\in C_0(G/H)$ and $x\in G$, where $x\cdot \varphi(yH)=\varphi(x^{-1}yH)$.
By Schur's lemma, a covariant representation $(\pi,P)$ is irreducible, iff every intertwiner for the pair $(\pi,P)$ 
is a multiple of the identity.

If $\sigma$ is a unitary representation of  $H$, then $\sigma$ induces to a covariant representation
 $(\ind_H^G\sigma, P^\sigma)$, where $\ind_H^G\sigma$ is the induced representation as explained above, and  
$P^\sigma:C_0(G/H)\to\B(H_{\ind\sigma})$ is given by
$\big(P^\sigma(\varphi)\xi\big)(x)=\varphi(xH)\xi(x)$. 
Note that any intertwiner $T: H_\sigma\to H_\rho$ for two unitary representations $\sigma$ and $\rho$ of $H$
induces an intertwiner $\tilde{T}$ for the induced covariant pairs $(\ind_H^G\sigma,P^\sigma)$ and 
$(\ind_H^G\rho, P^\rho)$ via $(\tilde{T}\xi)(x)=T(\xi(x))$.
We shall make extensive use of

\begin{theorem}[Mackey's imprimitivity theorem] \label{thm-Mackey}
The assignment
$\sigma\mapsto (\ind_H^G\sigma, P^\sigma)$
induces a bijective correspondence between the collection $\Rep(H)$ of all equivalence classes of 
unitary representations of $H$ and the collection $\Rep(G, C_0(G/H))$ of all equivalence classes of 
covariant representations of $(G, C_0(G/H))$.
Moreover, the induced pair $(\ind_H^G\sigma, P^\sigma)$ is irreducible if and only if $\sigma$ is irreducible.
\end{theorem}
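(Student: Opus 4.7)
The plan is to prove this in three main steps: verifying covariance, establishing surjectivity of the correspondence, and using that bijection to handle injectivity and irreducibility. First I would check directly that $(\ind_H^G\sigma, P^\sigma)$ is indeed a covariant pair. For $\xi \in \mathcal F_\sigma$, $x,y\in G$, and $\varphi \in C_0(G/H)$ one computes
$$\bigl(P^\sigma(x\cdot\varphi)\xi\bigr)(y) = \varphi(x^{-1}yH)\xi(y) = \bigl(\ind_H^G\sigma(x)\, P^\sigma(\varphi)\, \ind_H^G\sigma(x)^{-1}\xi\bigr)(y),$$
which yields covariance on the dense subspace $\mathcal F_\sigma$ and hence everywhere by continuity.

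The main content is surjectivity. Given a covariant pair $(\pi, P)$ on $H_\pi$, I would extend $P$ to a projection-valued measure on the Borel sets of $G/H$; the covariance relation $\pi(x) P(E) \pi(x)^{-1} = P(x\cdot E)$ together with transitivity of the $G$-action on $G/H$ allows a direct-integral decomposition $H_\pi \cong \int^\oplus_{G/H} \mathcal H_{yH}\,d\mu(yH)$ with respect to a quasi-invariant measure $\mu$, in which $P(\varphi)$ acts as pointwise multiplication while $\pi(x)$ transports fibers. Fixing a Borel section $s:G/H\to G$ one transports everything to the fiber $\mathcal H_{eH}$ and extracts a unitary representation $\sigma: H \to \U(\mathcal H_{eH})$ from the stabilizer action; a routine but delicate identification then exhibits the resulting covariant pair as unitarily equivalent to $(\ind_H^G\sigma, P^\sigma)$. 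An elegant alternative is to identify covariant pairs with nondegenerate $\ast$-representations of the crossed product $C_0(G/H) \rtimes G$ and invoke Rieffel's Morita equivalence $C^*(H) \sim_M C_0(G/H) \rtimes G$, which gives the bijection abstractly; the abstract Rieffel-induced representation must then be matched with Blattner's explicit construction, and this matching of inner products and module structures is the main technical obstacle in either approach.

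Injectivity follows at once because the construction in the previous step recovers $\sigma$ (up to unitary equivalence) from the fiber over $eH$, so equivalent covariant pairs force equivalent $\sigma$. For irreducibility, the assignment $T \mapsto \tilde T$ noted in the excerpt embeds the commutant of $\sigma$ into the commutant of the covariant pair $(\ind_H^G\sigma, P^\sigma)$; the bijection just established shows this embedding is surjective, so the two commutants are isomorphic. By Schur's lemma, triviality of one commutant is equivalent to triviality of the other, which settles both directions of the irreducibility claim.
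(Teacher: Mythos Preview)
The paper does not actually prove this theorem: it is stated as a classical result (Mackey's imprimitivity theorem) and used freely, with Folland's book \cite{Fol} given as the general reference for the material on induced representations. So there is no ``paper's own proof'' to compare against.

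Your sketch is a reasonable outline of the standard argument. The covariance check is correct, and both routes you mention for surjectivity --- the direct-integral/measurable-section approach going back to Mackey and the Morita-equivalence approach via $C_0(G/H)\rtimes G\sim_M C^*(H)$ --- are the two canonical proofs found in the literature. The commutant argument for irreducibility is also the right idea: the map $T\mapsto\tilde T$ is an isomorphism of commutants once the bijection is established, and Schur's lemma finishes it. The only caveat is that what you have written is genuinely a sketch; the ``routine but delicate identification'' in the direct-integral approach (measurability of the section, handling the Radon--Nikodym cocycle, matching with Blattner's realization) is where all the work lies, and similarly the explicit identification of Rieffel induction with Blattner induction requires care. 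But as an outline of a known theorem that the paper simply cites, this is fine.
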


We shall later need the following  lemma. It might be well-known to experts, but by lack of a reference, we shall give the proof.

\begin{lemma}\label{lem-ind-res}
Suppose that $G$ is a locally compact group, $R$ and $N$ are closed subgroups of $G$ such that 
$N$ is normal and $G=NR$ is the product of $N$ and $R$.  Suppose further that $\Delta_R(r)=\Delta_G(r)$ for all $r\in R$. Then, if $\rho$ is a representation 
of $R$, then $\ind_{R\cap N}^N(\rho|_{R\cap N})\cong (\ind_R^G\rho)|_N$.
\end{lemma}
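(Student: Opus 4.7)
The plan is to construct an explicit unitary intertwiner given by restriction of functions from $G$ to $N$. The key geometric observation is that since $G=NR$ with $N$ normal, the continuous map
$$\phi: N/(N\cap R)\to G/R,\quad n(N\cap R)\mapsto nR,$$
is a well-defined $N$-equivariant bijection; by second countability and standard arguments on the openness of multiplication $N\times R\to G$ in locally compact groups, $\phi$ is in fact a homeomorphism.

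Next, I would define $U:\mathcal F_\rho \to \mathcal F_{\rho|_{N\cap R}}$ by restriction, $U\xi=\xi|_N$. This is well-defined: the $(N\cap R)$-equivariance of $U\xi$ is inherited from the $R$-equivariance of $\xi$, using that the modular correction $\gamma=(\Delta_G/\Delta_R)^{1/2}$ equals $1$ by hypothesis; the support condition transfers through $\phi$. The map $U$ intertwines $N$-actions, since for $n_0,n\in N$,
$$(U\,\ind_R^G\rho(n_0)\xi)(n)=\xi(n_0^{-1}n)=\bigl(\ind_{N\cap R}^N(\rho|_{N\cap R})(n_0)\,U\xi\bigr)(n).$$
Surjectivity of $U$ is seen by extending any $\eta\in\mathcal F_{\rho|_{N\cap R}}$ to $\tilde\eta:G\to H_\rho$ via $\tilde\eta(nr)=\rho(r^{-1})\eta(n)$; well-definedness uses the $(N\cap R)$-equivariance of $\eta$ (if $nr=n'r'$ then $n^{-1}n'=r(r')^{-1}\in N\cap R$), and continuity together with the support condition transfers back via the homeomorphism $\phi$.

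The remaining and principal obstacle is the isometry property of $U$. The assumption $\Delta_G|_R=\Delta_R$ makes $g\mapsto\|\xi(g)\|^2$ descend to a function on $G/R$, whose integral against the (essentially unique) $G$-invariant Radon measure on $G/R$ gives $\|\xi\|^2$. Via the homeomorphism $\phi$, this measure pulls back to an $N$-invariant Radon measure on $N/(N\cap R)$, which by uniqueness must equal, up to a normalizing constant that can be absorbed into $U$, the one defining the inner product on $H_{\ind_{N\cap R}^N(\rho|_{N\cap R})}$. The technical hurdle is to verify that $N/(N\cap R)$ admits an $N$-invariant measure in the first place, i.e.\ that $\Delta_N|_{N\cap R}=\Delta_{N\cap R}$; this should follow from the hypothesis together with the normality of $N$ via the standard modular-function relations for the product decomposition $G=NR$. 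An alternative route that sidesteps this computation is to work directly with Bruhat sections: construct a Bruhat section for $G/R$ as the push-forward of one for $N/(N\cap R)$ through $\phi$ using the factorization $g=nr$, and then match the two inner products by Fubini.
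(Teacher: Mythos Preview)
Your proposal is correct and follows essentially the same route as the paper: the paper uses exactly your extension map $\tilde\eta(nr)=\rho(r^{-1})\eta(n)$ (i.e., the inverse of your restriction $U$) and takes your ``alternative route'' of pushing a Bruhat section for $N/(N\cap R)$ forward to one for $G/R$, then comparing inner products. The one place the paper is more explicit is in the Fubini step: it realizes Haar measure on $G=NR$ via the surjection $N\rtimes R\to NR$ and Weil's integral formula, which simultaneously handles the modular-function identity $\Delta_N/\Delta_{N\cap R}=\Delta_G/\Delta_R$ on $N\cap R$ that you flagged as the technical hurdle.
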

\begin{proof} Since $N$ is normal in $G$ and $N\cap R$ is normal in $R$, it follows from the assumptions that
$(\Delta_{G}(r)/\Delta_R(r))^\frac{1}{2}=1$ for all $r\in R$ and 
$(\Delta_{N}(r)/\Delta_{N\cap R}(r))^\frac{1}{2}=(\Delta_{G}(r)/\Delta_R(r))^\frac{1}{2}=1$ for $r\in N\cap R$.
It follows that the induced representation $\ind_R^G\rho$ acts on the Hilbert-space completion of 
\begin{equation}\label{eq-Frho}
\begin{split}
\mathcal F_\rho^R:=\{\xi\in C(NR, H_\rho): &\;\xi(xr)=\rho(r^{-1})\xi(x)\\
&\;\text{for all $x\in NR, r\in R$ and $\xi$ has compact support modulo $R$}\},
\end{split}
\end{equation}
while $\ind_{N\cap R}^N(\rho|_{N\cap R})$ 
acts on the Hilbert space completion of
\begin{equation}\label{eq-Frho1}
\begin{split}
\mathcal F_\rho^{N\cap R}:=&\{\xi\in C(N, H_\rho): \;\xi(xr)=\rho(r^{-1})\xi(x)\\
&\;\quad\text{for all $x\in N, r\in R\cap N$ and $\xi$ has compact support modulo $N\cap R$}\}
\end{split}
\end{equation}
It is then straightforward to check that we obtain a  bijective linear map 
$$\Phi: \mathcal F_\rho^{N\cap R}\to \mathcal F_\rho^{R}; \xi\mapsto\tilde\xi$$
with  $\tilde\xi(nr)=\rho(r^{-1})\xi(n)$.
It clear that this map preserves the left translation action by $N$ on both spaces. So the result will follow, if we can show that $\Phi$ preserves the inner products on both spaces.
For this let $\beta:N\to [0,\infty)$ be a Bruhat section for $N/(N\cap R)$ and let $\varphi\in C_c(R)^+$ such that 
$\int_R\varphi(r)\,dr=1$. It is then straightforward to check that
$$\tilde\beta:NR\to [0,\infty): \tilde\beta(nr)=\int_{N\cap R}\beta(nl)\varphi(l^{-1}r)\,dl$$
is a Bruhat section for $NR/R$. 

In order to compare Haar measures on $NR$ and $N$ we consider the semi-direct product $N\rtimes R$ given by the conjugation action of $R$ on $N$. Then one checks that
$$q:N\rtimes R\to NR; q(n,r)=nr$$
is a surjective homomorphism with $\ker\ q=\{(r^{-1},r): r\in N\cap R\}$ isomorphic to $N\cap R$ via projection on the second factor. Thus, using  Weil's integral formula we get 
$$\int_{N\rtimes R} f(n,r)\,dn\,dr=\int_{NR} \int_{N\cap N} f(nl^{-1}, l r)\,dl\, d(nr).$$
Using this, we compute for $\xi, \eta \in \mathcal F_\rho^R$:
\begin{eqnarray*}
\langle \xi, \eta \rangle & = & \int_{NR} \tilde{\beta}(nr)
\langle \xi(nr), \eta(nr) \rangle \;d(nr) \\
& = & \int_{NR} \int_{N \cap R} \beta(nl) \varphi(l^{-1}r)\; dl \;
\langle \xi(nr), \eta(nr) \rangle\; d(nr) \\
& = & \int_{N \rtimes R}
\beta(h) \varphi (r) \langle \xi(nr), \eta(nr) \rangle \; d(n,r)\\
& = & \int_N \int_R \beta(n) \langle \xi(n), \eta(n) \rangle
\varphi(r)\; dr \; dn
 =  \int_N \beta (n) \langle \xi(n), \eta(n) \rangle \; dn\\
& = & \langle \Phi(\xi), \Phi(\eta) \rangle.
\end{eqnarray*}
\end{proof}

Let us also  recall the Fell topology and the notion of weak containment for representations: In general, if $A$ is a $C^*$-algebra, then we denote by 
$\Rep(A)$ the collection of all equivalence classes of nondegenerate $*$-representations of $A$.
A base of the {\em Fell topology} on $\Rep(A)$ is given by all sets of the form
$$U(\pi, I_1,\ldots, I_l)=\{\rho\in \Rep(A): \rho(I_i)\neq \{0\}\}\;\text{for all $1\leq i\leq l$}\},$$
where $I_1,\ldots, I_l$ is any finite family of closed ideals in $A$. Restricted to $\widehat{A}$, this becomes the usual Jacobson topology on $\widehat{A}$. Note that $\Rep(A)$ only becomes a set, and hence a topological space, after restricting the dimensions of the representations by some cardinal $\kappa$, but we can always choose this cardinal big enough, so that all representations we are interesting in lie in $\Rep(A)$.

The Fell topology is closely related to the notion of {\em weak containment} of representations:
If $\pi\in \Rep(A)$ and $\Sigma\subseteq \Rep(A)$, then we say $\pi$ {\em is weakly contained in $\Sigma$} (written $\pi\prec\Sigma$), if $\ker \pi\supseteq \ker\Sigma:=\cap_{\sigma\in \Sigma} \ker\sigma$. Two subsets $\Sigma_1,\Sigma_2\subseteq \Rep(A)$ are called {\em weakly equivalent} (written $\Sigma_1\sim\Sigma_2$) if every element of $\Sigma_1$ is weakly contained in $\Sigma_2$ and vice versa. This is equivalent to $\ker\Sigma_1=\ker\Sigma_2$. Restricted to $\widehat{A}$, weak containment is same as the closure relation in $\widehat{A}$ with respect to the Jacobson topology. Note also that every $\pi\in \Rep(A)$ is weakly equivalent to its {\em spectrum} $\Sp(\pi):=\{\sigma\in \widehat{A}: \sigma\prec \pi\}$ and that $\Sigma\sim \bigoplus_{\sigma\in \Sigma}\sigma$ for every $\Sigma\subseteq \Rep(A)$.

\begin{remark}\label{rem-weak-containment}
The connection between weak containment and the Fell topology is given by the following facts
\\
{\bf (a)} A net $\pi_i$ converges to $\pi$ in $\Rep(A)$ if and only if
every subnet of $\pi_i$ weakly contains $\pi$  (see \cite[ Proposition 1.2]{Fel}).
\\
{\bf (b)} If $\pi_i$ converges to $\pi$ in $\Rep(A)$ and $\rho\prec \pi$, then $\pi_i$ converges to $\rho$ in $\Rep(A)$  (see {\cite[Proposition 1.3]{Fel}}).
\\
{\bf (c)}
Let $(\pi_i)_{i \in I}$ be a net in $\Rep(A)$ with $\pi_i
\rightarrow \pi$ for some $\pi \in \widehat{A}$. For every $i \in I$,
let $D_i$ be a dense subset of $\Sp(\pi_i)$. Then there exists a
subnet $(\pi_{\lambda})_{\lambda \in \Lambda}$ of $(\pi_i)_{i \in
I}$ and a net $(\rho_{\lambda})_{\lambda \in \Lambda}$ in
$\widehat{A}$ such that $\rho_{\lambda} \in D_{\lambda}$ for all
$\lambda \in \Lambda$ and $\rho_{\lambda} \rightarrow \pi$ in
$\widehat{A}$ (see {\cite[Theorem 2.2]{Sch}}) .
\end{remark}




If $G$ is a locally compact group, then the $C^*$-group algebra  $C^*(G)$ of $G$ is defined as the 
enveloping $C^*$-algebra of $L^1(G)$. If $\pi$ is a unitary representation of $G$, then $\pi$ integrates to a $*$-representation of $L^1(G)$ via $L^1(G)\ni f\to \pi(f)=\int_Gf(x)\pi(x)\,dx\in \B(H_\pi)$.
This representation extends uniquely to $C^*(G)$ and this procedure gives a bijection between
the set $\Rep(G)$ of all equivalence classes of unitary representations of $G$ and $\Rep(C^*(G))$.
The Fell topology  and the notion of weak containment  on $\Rep(G)$ are defined via identifying 
$\Rep(G)$ with $\Rep(C^*(G))$ in this way.

\begin{remark}\label{rem-group-weak-containment}
We list some useful facts about weak containment of group representations:
\\
{\bf (a)} For $\rho\in \Rep(G)$ and $\Sigma\subseteq \Rep(G)$ with $\rho\prec \Sigma$ we have $\rho\otimes \pi\prec \Sigma\otimes\pi:=\{\sigma\otimes \pi: \sigma\in \Sigma\}$ (see {\cite[Theorem 1]{Fel2}}).
\\
{\bf (b)} Suppose that $N$ is a closed normal subgroup of $G$ and $\rho\in \Rep(N)$. Then
$$(\ind_N^G\rho)|_N\sim \{x\cdot\rho: x\in G\},$$
where $x\cdot \rho(n)=\rho(x^{-1}nx)$ denotes the conjugate of $\rho$ by $x$. In particular, we get 
$\rho\prec (\ind_N^G\rho)|_N$.\\
{\bf (c)} Let $G$ be an amenable, locally compact group and let $H$ be a
closed subgroup of $G$. Then $\pi\prec\ind_H^G(\pi|_H)$ for all $\pi\in \Rep(G)$ 
(combine \cite[Theorem 5.1]{Gre} with part (b) of Remark \ref{rem-properties-of-induction}).
\end{remark}



%

Note that if $N$ is a normal subgroup of $G$, we have $\ind_H^G1_N\cong \lambda_{G/N}$, the left regular representation of $G/N$. If $G/N$ is amenable, then $\lambda_{G/N}$ is a faithful representation of $C^*(G/N)$ and hence weakly equivalent to $\widehat{G/N}$. Moreover, $\widehat{G/N}$ is weakly equivalent to $D$, for any dense subset $D$ of $\widehat{G/N}$. 
Thus, as a (well-known) corollary of the above, we get for all $\pi\in \Rep(G)$:
\begin{equation}\label{eq-ind-res}
\ind_N^G(\pi|_N)\cong \ind_N^G1_N\otimes \pi\cong \lambda_{G/N}\otimes\pi\sim \widehat{G/N}\otimes\pi\sim D\otimes \pi
\end{equation}
whenever $D$ is a dense subset of $\widehat{G/N}$.

By the pioneering work of Fell we also know that weak containment is preserved by induction and restriction of representations:

\begin{theorem}[{ \cite{Fell2}}]\label{thm-cont-ind-res}
Suppose that $H$ is a closed subgroup of $G$, $\Sigma\subseteq \Rep(H)$ and $\sigma \in \Rep(H)$.
Then 
$$\sigma\prec\Sigma\quad\Longrightarrow\quad \ind_H^G\sigma\prec\ind_H^G\Sigma:=\{\ind_H^G\tau : \tau\in \Sigma\}.$$
Similarly, if $\pi\in \Rep(G)$ and $\Pi\subseteq \Rep(G)$, then
$$\pi\prec \Pi\quad\Longrightarrow\quad \pi|_H\prec\Pi|_H:=\{\rho|_H: \rho\in \Pi\}.$$
\end{theorem}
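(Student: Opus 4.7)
The plan is to prove both implications using Fell's characterization of weak containment via functions of positive type: $\rho \prec \Sigma$ in $\Rep(G)$ if and only if every diagonal matrix coefficient $g \mapsto \langle \rho(g)\xi,\xi\rangle$ of $\rho$ is the uniform-on-compacta limit of finite sums of diagonal matrix coefficients associated to representations in $\Sigma$. Under this reformulation it suffices to show that such approximations are preserved by restriction and by induction.

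The restriction assertion is essentially formal. Suppose $\pi \prec \Pi$ and let $\varphi(g) = \langle \pi(g)\xi,\xi\rangle$ be a diagonal matrix coefficient of $\pi$. By hypothesis, $\varphi$ is the uniform-on-compacta limit of sums of diagonal matrix coefficients of elements of $\Pi$. Since $H$ is closed in $G$, every compact subset of $H$ is compact in $G$, and $\varphi|_H$ is the corresponding diagonal matrix coefficient of $\pi|_H$. Restricting the approximating sums to $H$ yields uniform approximation on compacta of $H$ by diagonal matrix coefficients of elements of $\Pi|_H$, so $\pi|_H \prec \Pi|_H$.

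The induction assertion is the deeper content of \cite{Fell2}. My strategy is to exhibit a dense family of vectors in $H_{\ind\sigma}$ whose diagonal matrix coefficients depend continuously on $\sigma$. For $\eta \in C_c(G)$ and $v \in H_\sigma$ one checks directly that the $H_\sigma$-valued function obtained by averaging $\eta(xh)\,\sigma(h)v$ over $H$ (against the appropriate $\gamma$-twist) lies in $\mathcal{F}_\sigma$, and such vectors span a dense subspace of $H_{\ind\sigma}$. A direct computation using \eqref{definition-pairing-on-F-sigma} then rewrites the corresponding diagonal matrix coefficient $x \mapsto \langle \ind_H^G\sigma(x)\xi,\xi\rangle$ as an integral
$$\int_H K_\eta(x,h)\,\langle \sigma(h)v,v\rangle\,dh,$$
where the continuous kernel $K_\eta$ depends only on $\eta$, $\beta$ and $\gamma$, and where, crucially, the support of $h \mapsto K_\eta(x,h)$ stays inside a fixed compact subset of $H$ as $x$ ranges over any compactum of $G$.

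The main obstacle is converting pointwise-on-compacta approximation of matrix coefficients on $H$ into uniform-on-compacta approximation of the induced matrix coefficients on $G$. The support control just described is exactly what makes this work: for $x$ in a fixed compactum of $G$, the integration against $K_\eta(x,\cdot)$ is confined to a fixed compactum $C \subseteq H$, so if $\langle \sigma(\cdot)v,v\rangle$ is uniformly approximated on $C$ by finite sums $\sum_i \langle \tau_i(\cdot)v_i,v_i\rangle$ with $\tau_i \in \Sigma$, then an elementary $L^1$-estimate for the kernel, combined with its joint continuity in $(x,h)$, yields uniform approximation on compacta of $G$ of the induced diagonal matrix coefficient attached to $\ind_H^G\sigma$ by the analogously constructed matrix coefficients attached to $\ind_H^G\tau_i$. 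This gives $\ind_H^G\sigma \prec \ind_H^G\Sigma$ and completes the proof.
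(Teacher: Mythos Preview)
The paper does not supply its own proof of this theorem; it is quoted from Fell \cite{Fell2} as a known result. Your sketch is a correct reconstruction of Fell's original argument: the restriction half is indeed formal, and for induction the decisive point---which you identify---is that the vectors $\xi_{\eta,v}(x)=\int_H\eta(xh)\gamma(h)\sigma(h)v\,dh$ for $\eta\in C_c(G)$, $v\in H_\sigma$ are total in $H_{\ind\sigma}$ and have diagonal matrix coefficients of the form $\int_H K_\eta(x,h)\langle\sigma(h)v,v\rangle\,dh$ with $K_\eta$ independent of $\sigma$ and compactly supported in $h$ uniformly for $x$ in compacta of $G$. The same $\eta$ with $v_i\in H_{\tau_i}$ produces the approximating matrix coefficients for $\ind_H^G\tau_i$, and the $L^1$-bound on the kernel converts uniform approximation on $C\subseteq H$ into uniform approximation on compacta of $G$. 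The argument as outlined is sound.
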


\section{Some results of Howe and two-step nilpotent groups}\label{sec-Howe}

In this section we want to recall some general results on the representation theory of nilpotent locally compact groups which are mainly due to Howe in \cite{How}. Throughout this section suppose that $G$ is a $k$-step nilpotent locally compact group for some $k\in \NN$. 
We write
$$\{e\}=Z_0\subseteq Z_1\subseteq Z_2\subseteq\cdots\subseteq Z_k=G$$
for the ascending central series of $G$. We usually write $Z$ for the center $Z_1$ of $G$.
For two subsets $A,B\subseteq G$ we write $(A; B)$ for the 
subgroup generated by all commutators $(a;b):=aba^{-1}b^{-1}$ with $a\in A, b\in B$.

 If $k\geq 2$, let $A$ be any maximal abelian subgroup of $Z_2$ 
and let $N$ denote the centralizer of $A$ in $G$. Then $N$ has nilpotence length at most $k-1$ and we obtain a nondegenerate bihomomorphism
$$\Phi: G/N\times A/Z\to Z; (\dot x, \dot y)\mapsto (x;y)=xyx^{-1}y^{-1}.$$
Recall that nondegeneracy means that the corresponding homomorphisms
$$G/N\to \Hom(A/Z,Z); \dot x\mapsto \Phi(\dot x, \cdot)\quad\text{and}\quad 
A/Z\to \Hom(G/N, Z); \dot y\mapsto \Phi(\cdot, y)$$
are both injective.
Now, if $\psi\in \widehat{Z}$, we obtain a bicharacter
$$\Phi_\psi: G/N\times A/Z\to \TT;\; \Phi_\psi(\dot x, \dot y)=\psi\big((x;y)\big),$$
which is always nondegenerate if $\psi$ is faithful on ${(G;A)}\subseteq Z$. 
Note that if $\Phi_\psi$ is nondegenerate, then the corresponding injective homomorphisms 
\begin{equation}\label{eq-nondegenerate}
 G/N\to \widehat{A/Z}: \dot{x}\mapsto \Phi_\psi(\dot{x},\cdot)\quad\text{and}\quad
A/Z\to \widehat{G/N}:\dot{y}\mapsto\Phi_\psi(\cdot ,\dot{y}),
\end{equation}
have both dense range. To see this, suppose that $\Lambda\subseteq \widehat{A/Z}$ is the closure of 
the image of $G/N$ under the first homomorphism in (\ref{eq-nondegenerate}). Then $\Lambda=L^\perp$ for some closed subgroup $L$ of $A/Z$, and one easily checks that $L$ lies in the kernel of the second homomorphism in  (\ref{eq-nondegenerate}). Thus $L=\{e\}$ must be trivial and $\Lambda=\widehat{A/Z}$.

Let us also remind the reader that it follows from Schur's lemma, that for every irreducible representation of 
$G$ there exists a unique character $\psi_{\pi}\in \widehat{Z}$ such that $\pi|_Z=\psi_{\pi}\cdot 1_{H_\pi}$. 
Of course, if $\pi$ is faithful on ${(G;A)}\subseteq Z$, the same holds for  $\psi_{\pi}$. 
The following proposition is a slight reformulation of  \cite[Proposition 5]{How}. For completeness, we give a proof.

\begin{proposition}\label{A-N-proposition}
Suppose that $G$ is a  second countable nilpotent locally compact group. Let $Z, A$ and $N$ be as above and suppose that  $\pi\in \widehat{G}$ with central character $\psi=\psi_\pi$ such that 
$\pi$ is not one-dimensional and 
the bicharacter
$\Phi_\psi: G/N\times A/Z\to\TT$ is nondegenerate (which is automatic if $\pi$, and hence $\psi$,
is faithful on ${(G,A)}\subseteq Z$).
Then  there exists some $\rho\in \widehat{N}$ with $\rho|_Z\sim \psi$, 
$\ind_N^G\rho$ is irreducible and 
$\ker\pi=\ker(\ind_N^G\rho)$ in $C^*(G)$. 
\end{proposition}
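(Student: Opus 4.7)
The approach, modeled on the classical nilpotent Lie-group argument, is to decompose $\pi|_A$ spectrally, use the nondegeneracy of $\Phi_\psi$ to show that the spectrum fills out all of $\widehat{A}_\psi$, and then extract $\rho$ as an irreducible fiber in the resulting decomposition of $\pi|_N$.

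First, I would observe that $A$ is normal in $G$: for $g\in G$ and $a\in A\subseteq Z_2$, the commutator $(g;a)$ lies in $Z\subseteq A$, so $gag^{-1}\in A$. Consequently $N=Z_G(A)$ is normal as well, and $A\subseteq Z(N)$. By Stone's theorem applied to the abelian subgroup $A$, there is a projection-valued measure $P$ on $\widehat{A}$ with $\pi(a)=\int_{\widehat{A}}\chi(a)\,dP(\chi)$ satisfying the covariance $\pi(g)P(E)\pi(g^{-1})=P(g\cdot E)$. Since $\pi|_Z=\psi$, $\supp P$ is contained in $\widehat{A}_\psi:=\{\chi\in\widehat{A}:\chi|_Z=\psi\}$. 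A direct calculation gives $(g\cdot\chi)(a)=\chi(g^{-1}ag)=\Phi_\psi(\dot g^{-1},\dot a)\chi(a)$, so the $G$-action on $\widehat{A}_\psi$ factors through the dense injection $G/N\hookrightarrow\widehat{A/Z}$ of \eqref{eq-nondegenerate}. In particular every $G$-orbit is dense in $\widehat{A}_\psi$ and the $G$-stabilizer of each $\chi\in\widehat{A}_\psi$ equals $N$. Combined with the irreducibility of $\pi$, the covariance of $(\pi,P)$ then forces, by a standard ergodicity argument, $\supp P=\widehat{A}_\psi$.

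Next, I would write $H_\pi=\int^\oplus K_\chi\,d\mu(\chi)$; since $N$ centralizes $A$, each fiber $K_\chi$ is $\pi(N)$-invariant, yielding $\pi|_N=\int^\oplus\rho_\chi\,d\mu(\chi)$ with $\rho_\chi|_A=\chi$ and $g\cdot\rho_\chi\cong\rho_{g\cdot\chi}$. Pick $\chi_0\in\widehat{A}_\psi$ such that $\rho:=\rho_{\chi_0}$ is irreducible (the generic case under the ergodic decomposition); then $\rho|_Z=\psi$ automatically. Since the $G$-stabilizer of $\rho$ is contained in that of $\rho|_A=\chi_0$, which is $N$, and $N$ is normal, Mackey's irreducibility criterion for induction from normal subgroups yields that $\ind_N^G\rho$ is irreducible.

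It remains to show $\ker\pi=\ker\ind_N^G\rho$. The forward inclusion $\pi\prec\ind_N^G\rho$ follows by combining Remark \ref{rem-group-weak-containment}(c) (amenability gives $\pi\prec\ind_N^G(\pi|_N)$) with the fact that the spectrum of $\pi|_N$ is the closure of the $G$-orbit $\{g\cdot\rho:g\in G\}$ in $\widehat{N}$, Remark \ref{rem-properties-of-induction}(d) together with normality of $N$ (so that all $\ind_N^G(g\cdot\rho)\cong\ind_N^G\rho$), and continuity of induction (Theorem \ref{thm-cont-ind-res}). For the reverse inclusion, apply Mackey's imprimitivity Theorem \ref{thm-Mackey}: transfer the covariant pair $(\pi,P)$ on $\widehat{A}_\psi$ to a covariant pair for $(G,C_0(G/N))$ along the dense embedding, realizing $\pi\cong\ind_N^G\rho'$ for some $\rho'\in\Rep(N)$; since $\rho'$ and $\rho$ lie in the same $G$-quasi-orbit in $\widehat{N}$, one gets $\ker\ind_N^G\rho'=\ker\ind_N^G\rho$. \textbf{The main obstacle} is this transfer step: because $G/N\hookrightarrow\widehat{A/Z}$ is only dense and not closed, building $\tilde P:C_0(G/N)\to\B(H_\pi)$ from $P$ requires a delicate measure-theoretic form of Mackey's imprimitivity, leveraging that the quasi-orbit through $\chi_0$ carries full $\mu$-mass.
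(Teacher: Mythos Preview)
Your overall architecture is quite different from the paper's, and the place where you yourself flag trouble is indeed a genuine gap.

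\textbf{Where the argument stalls.} The final ``transfer step'' does not go through as stated. The covariant pair $(\pi,P)$ lives over $\widehat{A}_\psi\cong\widehat{A/Z}$, and the dense injection $G/N\hookrightarrow\widehat{A/Z}$ is typically not a homeomorphism onto its image (indeed, in the interesting cases it is not even locally closed). There is no mechanism for producing a nondegenerate $*$-representation of $C_0(G/N)$ out of $P$: functions in $C_0(G/N)$ do not extend to $\widehat{A/Z}$, and ``full $\mu$-mass on the quasi-orbit'' does not help because the quasi-orbit can be all of $\widehat{A}_\psi$. So you cannot conclude $\pi\cong\ind_N^G\rho'$ from the imprimitivity theorem. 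A secondary loose end: the assertion that a.e.\ fiber $\rho_\chi$ is irreducible is not automatic from ergodicity of $\mu$ alone and needs justification.

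\textbf{How the paper avoids this.} The paper never tries to realize $\pi$ itself as an induced representation; it works entirely at the level of weak equivalence. Two short computations replace your direct-integral machinery. First (Lemma~\ref{lemma1}), for $y\in A$ the character $\chi_y:=\Phi_\psi(\cdot,\dot y)\in\widehat{G/N}$ satisfies $\chi_y\otimes\pi\cong\pi$ (conjugation by $\pi(y)$), and density of $\{\chi_y\}$ in $\widehat{G/N}$ together with $\ind_N^G(\pi|_N)\sim\widehat{G/N}\otimes\pi$ gives $\pi\sim\ind_N^G(\pi|_N)$ directly --- this is the replacement for your imprimitivity step. Second (Lemma~\ref{lemma2}), irreducibility of $\ind_N^G\rho$ is shown by the observation that $P^\rho(\chi_y)=\psi_\rho(y^{-1})\cdot\ind_N^G\rho(y)$, so any intertwiner for $\ind_N^G\rho$ already intertwines the full system of imprimitivity. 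The existence of $\rho\in\widehat N$ with $\pi|_N\sim\{x\cdot\rho:x\in G\}$ is then quoted from Gootman \cite[Theorem~2.1]{Goo}, bypassing the fiberwise irreducibility issue. Putting these together, $\ind_N^G\rho\sim\ind_N^G\{x\cdot\rho\}\sim\ind_N^G(\pi|_N)\sim\pi$.

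If you want to salvage your route, note that your forward direction $\pi\prec\ind_N^G\rho$ is fine; for the reverse, drop the imprimitivity idea and instead argue $\rho\prec\pi|_N\Rightarrow\ind_N^G\rho\prec\ind_N^G(\pi|_N)$, and then prove $\ind_N^G(\pi|_N)\sim\pi$ by the tensor-product trick above. That closes the gap with much less effort than the measure-theoretic transfer you had in mind.
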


We devide the argument into two lemmas:

\begin{lemma}\label{lemma1}
Let $\pi\in \widehat{G}$, $\psi\in \widehat{Z}$ and $N$ be as in the proposition. Then $\ker\pi=\ker(\ind_N^G(\pi|_N))$.
\end{lemma}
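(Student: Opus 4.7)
The plan is to prove the equality of kernels via the weak equivalence $\pi\sim \ind_N^G(\pi|_N)$, which splits into the two opposite weak-containment inclusions. The easy direction $\pi\prec \ind_N^G(\pi|_N)$, yielding $\ker\pi\supseteq \ker\ind_N^G(\pi|_N)$, is immediate from Remark \ref{rem-group-weak-containment}(c), since $G$ is nilpotent and hence amenable.

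For the reverse inclusion I would first record two structural facts. Since $A\subseteq Z_2$, we have $(G;A)\subseteq (G;Z_2)\subseteq Z\subseteq A$, so $A$ is normal in $G$ and therefore so is its centralizer $N$. Moreover, since $(G;A)\subseteq Z$ is central, the map $g\mapsto (a\mapsto (g;a))$ from $G$ to $\Hom(A,Z)$ is a group homomorphism whose kernel is exactly $N$, so $G/N$ embeds into the abelian group $\Hom(A,Z)$ and is in particular abelian. With $N$ normal and $G/N$ amenable, equation (\ref{eq-ind-res}) gives $\ind_N^G(\pi|_N)\sim D\otimes\pi$ for any dense $D\subseteq \widehat{G/N}$. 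By the nondegeneracy of $\Phi_\psi$ and (\ref{eq-nondegenerate}), the set $D:=\{\chi_y:y\in A\}$ with $\chi_y(\dot x):=\Phi_\psi(\dot x,\dot y)=\psi\bigl((x;y)\bigr)$ is dense in $\widehat{G/N}$, so it remains to show $\chi_y\otimes\pi\cong \pi$ for every $y\in A$.

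This last step is a direct computation using the central character of $\pi$: for $x\in G$ and $y\in A$, since $(y;x)=(x;y)^{-1}\in Z$ and $\pi|_Z=\psi\cdot\id$, one has
\begin{equation*}
\pi(y)\pi(x)\pi(y)^{-1}=\pi(yxy^{-1})=\pi\bigl((y;x)\bigr)\pi(x)=\psi\bigl((x;y)\bigr)^{-1}\pi(x)=\chi_y(x)^{-1}\pi(x),
\end{equation*}
so $(\chi_y\otimes\pi)(x)=\pi(y)^{-1}\pi(x)\pi(y)$, meaning $\pi(y)$ intertwines $\chi_y\otimes\pi$ with $\pi$ and therefore $\ker(\chi_y\otimes\pi)=\ker\pi$ for every $y\in A$. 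I expect the main obstacle to be purely the bookkeeping needed to guarantee that $N$ is normal and $G/N$ abelian so that (\ref{eq-ind-res}) applies with $D$ sitting inside the Pontryagin dual of $G/N$; once those points are secured, the character-tensoring trick collapses the weak containment to the desired equality of kernels.
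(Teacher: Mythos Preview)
Your proof is correct and follows essentially the same route as the paper: both establish the weak equivalence $\ind_N^G(\pi|_N)\sim\pi$ by invoking equation~(\ref{eq-ind-res}) with the dense subset $D=\{\chi_y:y\in A\}\subseteq\widehat{G/N}$ and then showing each $\chi_y\otimes\pi$ is unitarily equivalent to $\pi$ via the intertwiner $\pi(y)$. The only cosmetic differences are that you treat the inclusion $\pi\prec\ind_N^G(\pi|_N)$ separately via amenability (which is redundant, since the weak equivalence from~(\ref{eq-ind-res}) already gives both directions) and that you spell out why $N$ is normal and $G/N$ abelian, which the paper uses tacitly.
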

\begin{proof}
For each $y\in  A$ let $\chi_y=\Phi_\psi(\cdot, y)\in \widehat{G/N}$. Then
$\{\chi_y: y\in A\}$ is dense in $\widehat{G/N}$. For $y\in A$ and $x\in G$ we compute
$$\chi_y(x)\pi(x)=\psi(yxy^{-1}x^{-1})\pi(x)=\pi(yxy^{-1}x^{-1})\pi(x)=\pi(y)\pi(x)\pi(y^{-1})$$
so that $\chi_y\otimes \pi$ is unitarily equivalent to $\pi$.
Now, by Equation (\ref{eq-ind-res}) we have 
$$\ind_N^G(\pi|_N)\sim \widehat{G/N}\otimes \pi\sim \{\chi_y\otimes \pi:y\in A\}\sim \pi.$$
\end{proof}

If $A, N\subseteq G$ are as above, then the center of $N$ contains the center $Z$ of $G$ and therefore
the central character $\psi_\rho$ restricts to some character $\psi$ of $Z$.  We then get

\begin{lemma}\label{lemma2}
Suppose that $G, A$ and $N$ are as above and let $\rho\in \widehat{N}$ such that the central character $\psi_\rho$ of $\rho$ restricts to $\psi\in \widehat{Z}$ such that 
$\Phi_\psi:G/N\times A/Z\to\TT$ is nondegenerate.
Then $\pi:=\ind_N^G\rho$ is irreducible with central character $\psi_\pi=\psi$. 
\end{lemma}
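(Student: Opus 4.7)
The plan is to dispose of the central character first by direct calculation, and then reduce irreducibility of $\pi=\ind_N^G\rho$ to Mackey's imprimitivity theorem. For $z\in Z$, centrality of $Z$ in $G$ gives $z^{-1}y=yz^{-1}$, so $(\pi(z)\xi)(y)=\xi(yz^{-1})=\rho(z)\xi(y)=\psi(z)\xi(y)$, showing $\psi_\pi=\psi$. For irreducibility, Theorem~\ref{thm-Mackey} says that, since $\rho$ is irreducible, the covariant pair $(\pi,P^\rho)$ of $(G,C_0(G/N))$ is irreducible; so it suffices to show that any bounded operator $T$ intertwining $\pi$ automatically commutes with every $P^\rho(\varphi)$, $\varphi\in C_0(G/N)$. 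Equivalently, I will aim for $P^\rho(C_0(G/N))\subseteq\mathcal{M}$, where $\mathcal{M}$ denotes the von Neumann algebra generated by $\pi(G)$.

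The key step is an explicit computation of $\pi(a)$ for $a\in A$. Since $A$ is abelian we have $A\subseteq N$, and in fact $A\subseteq Z(N)$; Schur's lemma then forces $\rho(a)=\psi_\rho(a)\operatorname{Id}$ for a character $\psi_\rho\in\widehat{A}$ extending $\psi$. Since $A\subseteq Z_2$, the commutator $[y^{-1},a^{-1}]$ lies in $Z$, and writing $a^{-1}y=y\cdot[y^{-1},a^{-1}]\,a^{-1}$ and applying the covariance rule for $\mathcal F_\rho$ (with $\gamma\equiv 1$ by unimodularity of $G$) yields
\[
(\pi(a)\xi)(y)=\psi_\rho(a)\,\chi_a(yN)^{-1}\,\xi(y),\qquad \chi_a(yN):=\psi\!\big([y^{-1},a^{-1}]\big)=\Phi_\psi(\dot y,\dot a),
\]
using the identity $[y^{-1},a^{-1}]=[y,a]$ valid whenever $[y,a]\in Z$. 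Standard commutator identities then show $\chi_a$ descends to a character of $G/N$, and $a\mapsto\chi_a$ is exactly the map $A/Z\to\widehat{G/N}$ of (\ref{eq-nondegenerate}). Consequently $\pi(a)=\psi_\rho(a)\,P^\rho(\overline{\chi_a})$, placing every $P^\rho(\overline{\chi_a})$ inside $\mathcal{M}$.

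The remainder is Pontrjagin-Gelfand duality. Injectivity of $\dot y\mapsto\Phi_\psi(\dot y,\cdot)$ into the abelian group $\widehat{A/Z}$ forces $G/N$ to be abelian, so $C_0(G/N)\cong C^*(\widehat{G/N})$; the nondegeneracy of $\Phi_\psi$ yields that $\{\chi_a:a\in A\}$ is dense in $\widehat{G/N}$; and Fourier inversion for the strongly continuous unitary representation $\chi\mapsto P^\rho(\chi)$ of $\widehat{G/N}$ then gives $P^\rho(C_0(G/N))\subseteq\mathcal{M}$. I expect the main obstacle to be this final step: one must carefully justify that strong-operator density of a dense subgroup of characters passes through to density of the corresponding multiplication algebra inside the commutant. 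A clean route is to observe that $\{P^\rho(\chi_a):a\in A\}''$ is a von Neumann subalgebra of the maximal abelian $P^\rho(C_0(G/N))''$ which contains a dense subgroup of the character group $\widehat{G/N}$, and hence (by Stone-type reasoning on $\widehat{\widehat{G/N}}=G/N$) equals all of $P^\rho(C_0(G/N))''$; in particular it contains $P^\rho(C_0(G/N))$, and Mackey's irreducibility criterion finishes the proof.
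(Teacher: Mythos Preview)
Your proposal is correct and follows essentially the same route as the paper's own proof: both reduce irreducibility to Mackey's imprimitivity theorem by showing that for $a\in A$ the operator $\pi(a)$ equals a scalar multiple of the multiplication operator $P^\rho(\chi_a)$ (up to complex conjugation of $\chi_a$), and then use density of $\{\chi_a:a\in A\}$ in $\widehat{G/N}$ together with Fourier inversion on the abelian group $G/N$ to conclude that any $\pi$-intertwiner commutes with all of $P^\rho(C_0(G/N))$. The only cosmetic differences are that the paper phrases the argument in terms of a fixed intertwiner $T$ rather than the von Neumann algebra $\pi(G)''$, and that the paper derives the central-character statement at the end via $\rho\prec\pi|_N$ rather than by your direct computation; your worry about the ``density'' step is unnecessary, as strong continuity of $\chi\mapsto P^\rho(\chi)$ on $\widehat{G/N}$ makes it immediate.
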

\begin{proof}
Let $T\in \B(H_{\ind\rho})$ be any intertwiner for $\ind_N^G\rho$. We show that $T$ also intertwines the multiplication operators $P^\rho(\varphi):H_{\ind\rho}\to H_{\ind\rho}$, $\varphi\in C_0(G/N)$, given by 
$P^\rho(\varphi)\xi=\varphi\cdot\xi$. Since $\rho$ is irreducible, it will then follow from Mackey's imprimitivity theorem and Schur's lemma that $T$ is a multiple of  the identity, and hence that $\ind_N^G\rho$ is irreducible.

Since any $\varphi\in C_0(G/N)$  can be approximated in norm by Fourier-transforms of integrable functions  on 
$\widehat{G/N}$, it suffices to show that $T$ intertwines 
the multiplication operators $P^\rho(\chi)$ for all $\chi\in \widehat{G/N}$. By density, this will
follow if  $T$ intertwines all operators $P^\rho(\chi_y)$,  $y\in A$, with $\chi_y=\Phi_\psi(\cdot, \dot{y})$.
To see that this is the case, observe first that $\rho|_Z=\psi_{\pi}\cdot 1_{H_\rho}$. Indeed, the restriction of $\rho$ to the center $Z$ of $G$ coincides with the restriction of $x\cdot\rho$ to $Z$ for 
all $x\in G$. Since restriction preserves weak containment, this implies that 
$$\rho|_Z\sim \{(x\cdot\rho)|_Z: x\in G\}\sim \pi|_Z$$
from which the claim follows.
Now,  for $y\in A$, $x\in G$ and $\xi\in H_{\ind\rho}$  we get
$$
\rho(y^{-1})\big(\ind_N^G\rho(y)\xi\big)(x)=\rho(y^{-1})\xi(y^{-1}x)=\xi(xx^{-1}y^{-1}xy)=\rho(y^{-1}x^{-1}yx)\xi(x)=
\chi_y(x)\xi(x).$$
Since $A$ lies in the center of $N$, we have $\rho(y^{-1})=\psi_\rho(y^{-1})\id_{H_\rho}$ is a multiple of the identity. Thus the above computation implies 
$P^\rho(\chi_y)=\psi_\rho(y^{-1})\cdot \ind_N^G\rho(y)$ and hence $T$  intertwines $P^\rho(\chi_y)$ for all $y\in A$.

The last assertion follows from the fact that $\rho\prec\{x\cdot\rho:x\in G\}\sim \pi|N$, and hence $\rho|_Z\prec \pi|_Z$. 
\end{proof}

Above and in the following proof of the proposition we use the fact that for two characters $\chi,\mu$ of an abelian locally compact group $C$ we have $\chi\prec \mu\;\Leftrightarrow\; \chi=\mu$. It follows from this that if $\pi\in \widehat{G}$ and $\psi\in \widehat{C}$ for some central closed subgroup $C$ of $G$,
then:
$$\text{$\psi$ is the central character of $\pi$ on $C$}\quad\Leftrightarrow\quad
\psi\sim\pi|_C\quad\Leftrightarrow\quad
\psi\prec \pi|_C\quad\Leftrightarrow\quad
\pi|_C\prec \psi.$$

\begin{proof}[Proof of Proposition \ref{A-N-proposition}]
Since $G$ is second countable, it follows from \cite[Theorem 2.1]{Goo} that there exists some 
$\rho\in \widehat{N}$ with $\pi|_N\sim \{x\cdot\rho: x\in G\}$. This implies that $\rho|_Z\prec\pi|_Z\sim \psi$.
 It follows then from Lemma \ref{lemma2}
that $\ind_N^G\rho$ is irreducible. Since $\ind_N^G(x\cdot\rho)\cong\ind_N^G\rho$ for all $x\in G$ we also get 
$$\ind_N^G\rho\sim\ind_N^G \{x\cdot\rho:g\in G\}\sim \ind_N^G(\pi|_N)\sim\pi$$
by Lemma \ref{lemma1}. 
\end{proof}

\begin{remark}  We should also note that 
Proposition \ref{A-N-proposition}  together with an easy induction argument on the nilpotence length of $G$ implies that for any primitive ideal $P\in \Prim(G)$  there exists a closed subgroup $H$ of $G$ and a character $\mu$ of $H$ such that  $\ker(\ind_H^G\mu)=P$. We refer to the original argument in 
\cite[Proposition 5]{How}  for the details.
\end{remark}

If $G$ is a locally compact group with center $Z$, then it follows from the fact that restriction of representations preserves weak containment that the central character $\psi_\pi\in \widehat{Z}$ for any $\pi\in \widehat{G}$ 
only depends on the kernel $\ker\pi\in \Prim(G)$. Thus we may write $\psi_P$ instead of $\psi_\pi$ if $P=\ker\pi$ and we obtain a natural decomposition as a disjoint union
\begin{equation}\label{prim-decom}
\Prim(G)=\bigcup_{\psi\in \widehat{Z}} \mathcal P_\psi,\quad\text{with}\quad
\mathcal P_\psi=\{P\in \Prim(G): \psi_P=\psi\}\quad \text{for}\; \psi\in \widehat{Z}.
\end{equation}
We now want to study the sets $\mathcal P_\psi$ more closely in case where $G$ is two-step nilpotent.
For this we want to introduce some notation: If $\psi\in \widehat{Z}$, then a closed subgroup $A\subseteq G$ 
is called {\em subordinate to $\psi$} if 
$$\psi\big((x;y)\big)=1\quad\text{for all}\; x,y\in G.$$
If $A_\psi$ is  maximal with this property, then $A_\psi$ is called a polarizing subgroup for $\psi$.
Note that $A_\psi$ always contains $Z$ and it is not difficult to check that $A_\psi$ is a polarizing subgroup for $\psi\in \widehat{Z}$ if and only if $A_\psi/K_\psi$ is a maximal abelian subgroup of $G/K_\psi$,  if 
$K_\psi$ denotes the group kernel of $\psi$ in $Z$. Moreover, if we define the {\em symmetrizer} 
$Z_\psi$ of $\psi$ as
$$Z_\psi=\{x\in G: \psi\big((x;y)\big)=1\;\text{for all}\; y\in G\},$$
then $Z_\psi/K_\psi$ is the center of $G/K_\psi$.
The following proposition is well-known (e.g. see \cite[Lemma 2]{Kan-primal} or \cite{Pogi}). However, for the readers convenience, we include a proof.

\begin{proposition}\label{prop-two-step}
Let $G$ be a two-step nilpotent locally compact group and for $\psi\in \widehat{Z}$ let  $K_\psi, Z_\psi$ as above and let $A_\psi\subseteq G$ be any $\psi$-polarizing subgroup for $\psi$. 
Let $\mathcal Z_\psi:=\{\chi\in \widehat{Z}_\psi: \chi|_Z=\psi\}$ and 
$\mathcal A_\psi=\{\mu\in \widehat{A}_\psi: \mu|_Z=\psi\}$. Then the following are true
\begin{enumerate}
\item The map $\ind_\psi: \mathcal Z_\psi:\to \mathcal P_\psi; \chi\mapsto \ker(\ind_{Z_\lambda}^G\chi)$ is a well defined homeomorphism.
\item  $\ind_{A_\psi}^G\mu$ is irreducible for all $\mu\in \mathcal A_\psi$ and 
$\ker(\ind_{A_\psi}^G\mu)=\ker\big(\ind_{Z_\psi}^G(\mu|_{Z_\lambda})\big).$
\end{enumerate}
\end{proposition}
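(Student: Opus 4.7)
My plan is to prove (2) first and then deduce (1) from it. For the irreducibility assertion in (2), I would reduce modulo $K_\psi$ using Remark~\ref{rem-properties-of-induction}(c): in $\bar G:=G/K_\psi$ the image $\bar A_\psi:=A_\psi/K_\psi$ is genuinely abelian (since $(A_\psi;A_\psi)\subseteq K_\psi$) and maximal abelian, since any element centralizing $\bar A_\psi$ could be added to the polarizing subgroup. As $\bar G$ is two-step nilpotent with center $\bar Z_\psi:=Z_\psi/K_\psi$, the centralizer of $\bar A_\psi$ in $\bar G$ equals $\bar A_\psi$. The induced bicharacter $\bar G/\bar A_\psi\times\bar A_\psi/\bar Z_\psi\to\T$ obtained from $\bar\mu|_{\bar Z_\psi}$ is nondegenerate: one direction is maximality of $A_\psi$ as $\psi$-polarizing, the other is the definition of $Z_\psi$, both using $(G;A_\psi)\subseteq Z$ and $\mu|_Z=\psi$. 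Lemma~\ref{lemma2} then yields irreducibility of $\ind_{\bar A_\psi}^{\bar G}\bar\mu$, which pulls back.

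For the kernel equality in (2), I would apply induction in stages and use~(\ref{eq-ind-res}) (valid since $A_\psi/K_\psi$ is abelian and $Z_\psi$ is normal in $A_\psi$) to identify $\ind_{Z_\psi}^{A_\psi}(\mu|_{Z_\psi})$ with the family $\{\chi\mu:\chi\in\widehat{A_\psi/Z_\psi}\}\subseteq\widehat{A}_\psi$. The key point is that $A_\psi$ is normal in $G$ (as $xax^{-1}=(x;a)a$ with $(x;a)\in Z\subseteq A_\psi$), and the conjugation action has the explicit form $(x\cdot\mu)(a)=\psi((x^{-1};a))\mu(a)$. The density part of~(\ref{eq-nondegenerate}), applied to the nondegenerate bicharacter $\Phi_\psi:G/A_\psi\times A_\psi/Z_\psi\to\T$, shows that $\{x\cdot\mu:x\in G\}$ is dense in the fiber $\{\chi\mu:\chi\in\widehat{A_\psi/Z_\psi}\}$, so these two subsets of $\widehat{A}_\psi$ are weakly equivalent. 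Since $\ind_{A_\psi}^G(x\cdot\mu)\cong\ind_{A_\psi}^G\mu$ (Remark~\ref{rem-properties-of-induction}(d)) and induction preserves weak containment (Theorem~\ref{thm-cont-ind-res}), I conclude $\ind_{Z_\psi}^G(\mu|_{Z_\psi})\sim\ind_{A_\psi}^G\mu$.

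For (1), well-definedness is immediate from (2): any $\chi\in\mathcal Z_\psi$ extends to some $\mu\in\mathcal A_\psi$ via Pontryagin duality on $A_\psi/K_\psi$. For bijectivity, I would construct the inverse $\Psi:\mathcal P_\psi\to\mathcal Z_\psi$ as follows. For $\pi\in\widehat G$ with $\psi_\pi=\psi$ and $z\in Z_\psi$, the inclusion $(G;Z_\psi)\subseteq K_\psi$ gives $\pi(yzy^{-1})=\pi(z)$, so by Schur $\pi(z)=\chi_\pi(z)\,\id$ for a character $\chi_\pi\in\mathcal Z_\psi$ that depends only on $\ker\pi$ since restriction preserves weak equivalence. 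The identity $\Psi\circ\ind_\psi=\id$ follows from Remark~\ref{rem-group-weak-containment}(b) combined with the centrality of $Z_\psi$ modulo $K_\psi$. Conversely, Proposition~\ref{A-N-proposition} applied in $\bar G$ yields some $\rho\in\mathcal A_\psi$ with $\ker\pi=\ker\ind_{A_\psi}^G\rho$; a direct computation in the Blattner model shows $(\ind_{A_\psi}^G\rho)(z)=\rho(z)\,\id$ for $z\in Z_\psi$, forcing $\rho|_{Z_\psi}=\chi_\pi$, and (2) then gives $\ind_\psi(\chi_\pi)=\ker\pi$.

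The homeomorphism property splits naturally: $\ind_\psi$ is continuous as the composition of induction (Theorem~\ref{thm-cont-ind-res}) with the kernel map $\Rep(G)\to\Prim(G)$, and $\Psi$ is continuous because restriction to $Z_\psi$ preserves weak containment and, on the abelian group $Z_\psi$, weak containment of characters is just convergence in the Pontryagin dual. The main obstacle I expect is the density identification underpinning Step~2, namely showing that the conjugation orbit $\{x\cdot\mu:x\in G\}$ is dense in $\{\chi\mu:\chi\in\widehat{A_\psi/Z_\psi}\}$; this is the exact point where the nondegeneracy of $\Phi_\psi:G/A_\psi\times A_\psi/Z_\psi\to\T$ and the density consequence~(\ref{eq-nondegenerate}) are indispensable.
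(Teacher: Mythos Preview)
Your proposal is correct and follows essentially the same route as the paper. The paper first isolates the ``faithful central character'' situation as a separate Lemma~\ref{lemma3} (with $C=Z/K_\psi$, $Z=Z_\psi/K_\psi$, $A=A_\psi/K_\psi$ playing the roles of $C$, $Z$, $A$ there) and then deduces the proposition in one line by passing to $G/K_\psi$; you instead pass to $G/K_\psi$ first and carry out the same computations inline. The only substantive difference is in the surjectivity step for (1): you invoke Proposition~\ref{A-N-proposition} (hence implicitly Gootman's theorem) to produce $\rho\in\mathcal A_\psi$ with $\ker\pi=\ker\ind_{A_\psi}^G\rho$, whereas the paper argues directly that $\pi|_{A_\psi}\sim\mathcal A_\chi$ (using $\pi|_{A_\psi}\prec\ind_{Z_\psi}^{A_\psi}\chi\sim\mathcal A_\chi$ together with $G$-invariance and orbit density of $\mathrm{Sp}(\pi|_{A_\psi})$) and then applies Lemma~\ref{lemma1}. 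Both arguments are valid; the paper's version is slightly more self-contained, while yours reuses already-proved machinery.
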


As a direct corollary of this result we get

\begin{corollary}\label{cor-polarizing}
Let $G$ be a two-step nilpotent locally compact group and  let $A_\psi$ and $B_\psi$ be two polarizing subgroups for some given $\psi \in \widehat{Z}$. Let $\chi\in \widehat{A}_\psi$ and $\mu\in \widehat{B}_\psi$
such that $\chi|_{Z_\psi}=\mu|_{Z_\psi}$. Then $\ind_{A_\psi}^G\chi\sim\ind_{B_\psi}^G\mu$.
\end{corollary}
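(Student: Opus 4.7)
The plan is to derive the corollary immediately from Proposition~\ref{prop-two-step}(2), which already tells us that the primitive ideal $\ker(\ind_{A_\psi}^G\chi)$ factors through induction from the symmetrizer $Z_\psi$ and sees only the restriction $\chi|_{Z_\psi}$. Before invoking that result I would first verify the set-theoretic prerequisite $Z_\psi\subseteq A_\psi\cap B_\psi$, so that both restrictions $\chi|_{Z_\psi}$ and $\mu|_{Z_\psi}$ make sense. Since $G$ is two-step nilpotent, commutators are central, and an elementary computation shows that adjoining any $x\in Z_\psi$ to a $\psi$-polarizing subgroup still produces a subgroup on which $\psi((\cdot\,;\cdot))\equiv 1$; by maximality of $A_\psi$ and of $B_\psi$, $x$ must already lie in both. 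In particular the hypothesis $\chi|_{Z_\psi}=\mu|_{Z_\psi}$ then forces $\chi|_Z=\mu|_Z=\psi$, and hence $\chi$ and $\mu$ are admissible inputs for Proposition~\ref{prop-two-step}(2).

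Applying that proposition separately to $(A_\psi,\chi)$ and $(B_\psi,\mu)$ yields
\[
\ker\bigl(\ind_{A_\psi}^G\chi\bigr)=\ker\bigl(\ind_{Z_\psi}^G(\chi|_{Z_\psi})\bigr)=\ker\bigl(\ind_{Z_\psi}^G(\mu|_{Z_\psi})\bigr)=\ker\bigl(\ind_{B_\psi}^G\mu\bigr),
\]
where the middle equality is precisely the hypothesis. Since weak equivalence in $\Rep(G)$ is, by the definitions recalled in \S\ref{sec-prel}, equality of kernels in $C^*(G)$, this gives the claim $\ind_{A_\psi}^G\chi\sim\ind_{B_\psi}^G\mu$ at once.

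No step of this argument is genuinely hard; the corollary is a direct bookkeeping consequence of the proposition. The only point that deserves a moment of verification is the inclusion $Z_\psi\subseteq A_\psi\cap B_\psi$, which is what routes both $\ind_{A_\psi}^G\chi$ and $\ind_{B_\psi}^G\mu$ through the common intermediate representation $\ind_{Z_\psi}^G(\chi|_{Z_\psi})$ and is therefore the conceptual content of the statement: the particular polarization used to build the inducing character drops out of the primitive ideal, only the restriction to the symmetrizer survives.
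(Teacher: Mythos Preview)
Your proof is correct and follows exactly the route the paper intends: the corollary is stated immediately after Proposition~\ref{prop-two-step} with the words ``As a direct corollary of this result we get'' and no further argument, so your deduction from part~(2) of that proposition is precisely what is meant.

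One small logical slip: you write that the hypothesis $\chi|_{Z_\psi}=\mu|_{Z_\psi}$ ``forces $\chi|_Z=\mu|_Z=\psi$''. It forces $\chi|_Z=\mu|_Z$ (since $Z\subseteq Z_\psi$), but nothing in the stated hypotheses literally pins this common restriction to $\psi$; that is an implicit assumption of the corollary (and is how it is used later, e.g.\ in Lemma~\ref{lem-two-step}). Your verification that $Z_\psi\subseteq A_\psi\cap B_\psi$ is a nice touch that the paper leaves unspoken.
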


The proof  of Proposition \ref{prop-two-step} will follow from 

\begin{lemma}\label{lemma3}
Suppose $G$ is a two-step nilpotent locally compact group and suppose that  $C$ is a closed subgroup of $Z=Z(G)$ which contains the commutator subgroup $(G;G)$. Let $\psi\in \widehat{C}$ be a faithful character of $C$ and let $A$ be a maximal abelian subgroup of $G$.
Let $\mathcal P_\psi$ denote the set of primitive ideals of $G$ with central character $\psi$ on $C$ (i.e., we have $\psi=\psi_P|_C$ if $\psi_P\in \widehat{Z}$ denotes the central character of $P$) and let 
$\mathcal Z_\psi=\{\chi\in \widehat{Z}: \chi|_C=\psi\}$ and 
$\mathcal A_\psi=\{\mu\in \widehat{A}: \mu|_C=\psi\}$. Then 
\begin{enumerate}
\item The map $\ind_\psi: \mathcal Z_\psi:\to \mathcal P_\psi; \chi\mapsto \ker(\ind_{Z}^G\chi)$ is a well defined homeomorphism.
\item  $\ind_{A}^G\mu$ is irreducible  and
$\ker(\ind_{A}^G\mu)=\ker\big(\ind_{Z}^G(\mu|_{Z_\lambda})\big)$ for all $\mu\in \mathcal A_\psi$.
\end{enumerate}
\end{lemma}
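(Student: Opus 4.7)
My plan is to establish (2) first by applying Proposition~\ref{A-N-proposition} and Lemmas~\ref{lemma1}--\ref{lemma2} to the maximal abelian subgroup $A$ (whose centraliser in $G$ will turn out to be $A$ itself), and then to deduce (1) from (2) via induction in stages together with a fibrewise analysis of $\ind_Z^A\chi$ over $\widehat{A/Z}$.

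First I would establish the basic geometric setup. Since $G$ is two-step nilpotent we have $(G;G)\subseteq Z$, so $G/Z$ is abelian and every subgroup of $G$ containing $Z$ is normal in $G$; in particular $A\supseteq Z$ and $A$ is normal. Moreover, any element centralising $A$ together with $A$ generates an abelian subgroup and hence lies in $A$ by maximality, so the centraliser of $A$ in $G$ equals $A$ itself. Proposition~\ref{A-N-proposition} therefore applies with $Z_2=G$, maximal abelian subgroup $A$, and $N=A$. Since $\psi$ is faithful on $C\supseteq(G;G)$, for every $\mu\in\mathcal A_\psi$ the character $\mu|_Z$ is faithful on $(G;G)$, so the bicharacter $\Phi_{\mu|_Z}:G/A\times A/Z\to\TT$ of (\ref{eq-nondegenerate}) is nondegenerate. (The abelian case $G=Z$ reduces to Pontrjagin duality, so I may assume $G$ is non-abelian, ensuring that no $\ind_A^G\mu$ with $A\subsetneq G$ is one-dimensional.)

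To prove (2), Lemma~\ref{lemma2} applied to the character $\mu\in\widehat A$ yields that $\pi:=\ind_A^G\mu$ is irreducible with central character $\mu|_Z$, and Lemma~\ref{lemma1} gives $\ker\pi=\ker(\ind_A^G(\pi|_A))$. It then remains to identify $\ind_A^G(\pi|_A)$ with $\ind_Z^G(\mu|_Z)$ up to weak equivalence. By Remark~\ref{rem-group-weak-containment}(b), $\pi|_A\sim\{x\cdot\mu:x\in G\}$; a short commutator computation based on $x^{-1}ax=(x^{-1};a)\cdot a$ together with $(x^{-1};a)\in(G;G)\subseteq C$ gives $(x\cdot\mu)\mu^{-1}=\Phi_{\mu|_Z}(\dot{x^{-1}},\cdot)\in\widehat{A/Z}$, so nondegeneracy of $\Phi_{\mu|_Z}$ forces $\{x\cdot\mu:x\in G\}$ to be dense in the fibre $F:=\{\mu'\in\widehat A:\mu'|_Z=\mu|_Z\}$. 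Since $\ind_Z^A(\mu|_Z)$ is weakly equivalent to $F$, this yields $\pi|_A\sim\ind_Z^A(\mu|_Z)$; inducing from $A$ to $G$ via Theorem~\ref{thm-cont-ind-res} and induction in stages then gives $\ind_A^G(\pi|_A)\sim\ind_Z^G(\mu|_Z)$, and passing to kernels proves (2).

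For (1), the same fibrewise argument applied with $\chi\in\mathcal Z_\psi$ in place of $\mu|_Z$ will give $\ind_Z^G\chi\sim\{\ind_A^G\mu':\mu'|_Z=\chi\}$, so by (2) every $\ind_A^G\mu'$ in this family is irreducible with common kernel $\ker(\ind_Z^G\chi)$, making $\ind_\psi(\chi)$ primitive and a member of $\mathcal P_\psi$. Injectivity follows from $(\ind_Z^G\chi)|_Z\sim\chi$ (conjugation on $Z$ is trivial), which recovers $\chi$ as the central character of $\ind_\psi(\chi)$. For surjectivity, given $P=\ker\pi\in\mathcal P_\psi$, Proposition~\ref{A-N-proposition} supplies $\rho\in\widehat A$ with $\rho|_Z=\psi_\pi\in\mathcal Z_\psi$ and $P=\ker(\ind_A^G\rho)=\ind_\psi(\psi_\pi)$ (the last equality by (2)). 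Continuity of $\ind_\psi$ follows from Theorem~\ref{thm-cont-ind-res}; for continuity of the inverse I would use Remark~\ref{rem-weak-containment}(c) to lift a convergent net in $\Prim(G)$ to a convergent net in $\widehat G$, then apply continuity of restriction to $Z$ and Hausdorffness of $\widehat Z$. The main technical obstacle is the density/orbit argument in the proof of (2), which is where faithfulness of $\psi$ on $C\supseteq(G;G)$ is genuinely needed; everything else reduces to fairly formal manipulations with weak equivalence and induction in stages.
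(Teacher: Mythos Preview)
Your proposal is correct and follows essentially the same route as the paper's proof: both identify $A$ with its own centraliser, invoke Lemma~\ref{lemma2} for irreducibility, compute that the $G$-orbit of $\mu$ is dense in the fibre $\{\mu'\in\widehat A:\mu'|_Z=\chi\}\sim\ind_Z^A\chi$, and then pass to $G$ by induction in stages. The only noteworthy difference is in the surjectivity step for (1): you invoke Proposition~\ref{A-N-proposition} as a black box to produce $\rho\in\widehat A$ with $\ker\pi=\ker(\ind_A^G\rho)$, whereas the paper argues directly that $\pi|_A\sim\mathcal A_\chi$ for any $\pi$ with central character $\chi$ (essentially re-running the proof of Proposition~\ref{A-N-proposition} in this special case); your shortcut is legitimate and slightly cleaner.
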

\begin{proof} Since $G$ is two-step nilpotent, it follows that $A$ coincides with its centralizer $N$ in $G$.
Since $\psi$ is faithful on ${(G;G)}$, the same is true for any character $\mu\in \mathcal A_\psi$. Thus, it follows then from Lemma \ref{lemma2} that 
$\ind_A^G\mu$ is irreducible for all such $\mu$ with $\ker(\ind_A^G\mu)\in \mathcal P_\psi$.
Let $\chi:=\mu|_Z$ and let $\mathcal A_\chi=\{\nu\in \widehat{A}: \nu|_Z=\chi\}$. Then 
$\ind_Z^A\chi=\ind_Z^A(\mu|_Z)\sim \widehat{A/Z}\otimes\mu=\mathcal A_\chi$.
Since $\chi$ is faithful on ${(G;G)}$ we also see that the map
$G/A\to \widehat{A/Z}; \dot{x}\mapsto \mu_x$ with 
$\mu_x(y)=\chi\big((x;y)\big)$ has dense range in $\widehat{A/Z}$.
For $x\in G$ and $y\in A$ we get
\begin{align*}
(x\cdot\mu\otimes\bar{\mu})(y)=\mu(xyx^{-1})\overline{\mu(y)}=\mu(xyx^{-1}y^{-1})=\chi(xyx^{-1}y^{-1})=\mu_x(y)
\end{align*}
from which it follows that the orbit $\{x\cdot\mu: x\in G\}=\{\mu_x\otimes \mu :x\in G\}$ is dense in 
$\widehat{A/Z}\otimes\mu=\mathcal A_\chi\sim \pi|_A$.
Thus we get
$$\ind_Z^G\chi=\ind_A^G(\ind_Z^A\chi)\sim \ind_A^G\mathcal A_\chi\sim \ind_A^G\{x\cdot\mu: x\in G\}
\sim\ind_A^G\mu,$$
since $\ind_A^G(x\cdot\mu)\cong \ind_A^G\mu$ for all $x\in G$.

Combining the above results we get (2) and we see that the map in (1) is a well defined injective map.
To see that it is surjective, it suffices to show that for each $\pi\in \widehat{G}$ with central character 
$\chi\in \mathcal Z_\psi$ and for each $\mu\in \mathcal A_\chi$ we have $\pi|_A\sim \{x\cdot \mu: x\in G\}$,
since it follows then from Lemma \ref{lemma1} that
$$\pi\sim \ind_A^G(\pi|_A)\sim \ind_A^G\{x\cdot \mu: x\in G\}\sim \ind_A^G\mu.$$
By what we saw above, the desired result is equivalent to the statement $\pi|_A\sim \mathcal A_\chi$. 
To see that this is true note first that for any fixed $\mu\in \mathcal A_\chi$ we have
$$
\pi|_A\prec \ind_Z^A(\pi|_Z)\sim \ind_Z^A\chi\sim \ind_Z^A(\mu|_Z)\\
\sim \lambda_{A/Z}\otimes\mu\sim \widehat{A/Z}\otimes\mu\sim\mathcal{A}_\chi
$$
On the other hand, if $\mu\in \widehat{A}$ is weakly contained in $\pi|_A$, then $\mu|_Z$ is weakly contained in $\pi|_Z\sim \chi$, hence $\mu|_Z=\chi$. This proves the claim.

It follows now that the map in (1) is bijective with inverse given by mapping any $P\in \mathcal P_\psi$ to its central character $\chi_P\in \widehat{Z}$. Since induction and restriction preserve weak containment, 
the map in (1) and its inverse are both continuous, thus a homeomorphism.

\end{proof}

\begin{proof}[Proof of Proposition \ref{prop-two-step}]
The proposition follows from Lemma \ref{lemma3} by passing from $G$ to $G/K_\psi$ for fixed $\psi\in \widehat{Z}$, in which case the groups  $\dot C_\psi:=Z/K_\psi$, $\dot Z_\psi$ and $\dot A_\psi$ will play the
 r\^oles of $C, Z$ and $A$ in Lemma \ref{lemma3}.
\end{proof}

We finish this section with the following lemma, which will be used later:

\begin{lemma}\label{lem-injective-pairing}
Let $G$ be a two-step nilpotent locally compact group with center $Z$ and let $\psi\in \widehat{Z}$. 
Let $A\subseteq G$ be a closed subgroup of $G$ which is subordinate to $\psi$. Then the following are equivalent:
\begin{enumerate}
\item $A$ is a polarizing subgroup for $\psi$.
\item The homomorphism $\Phi_\psi: G/A\to \widehat{A}; \Phi_\psi(\dot x)(y)=\psi\big((x;y)\big)$ is injective.
\end{enumerate}
\end{lemma}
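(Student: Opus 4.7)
The plan is to first verify that $\Phi_\psi$ is well-defined as a continuous homomorphism $G/A\to\widehat{A}$, and then to establish (1)$\Leftrightarrow$(2) by two short direct arguments.

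For well-definedness, everything rests on the fact that $G$ is two-step nilpotent, so $(G;G)\subseteq Z$ and the commutator map is bihomomorphic: $(x_1x_2;y)=(x_1;y)(x_2;y)$ and $(x;y_1y_2)=(x;y_1)(x;y_2)$ for all $x,y\in G$. In particular $(x;y)\in Z$, so $\psi\big((x;y)\big)$ makes sense, and for fixed $x$ the map $y\mapsto \psi\big((x;y)\big)$ is a continuous character of $A$. Similarly, $x\mapsto \Phi_\psi(x)$ is a continuous homomorphism $G\to\widehat{A}$. The subordination hypothesis $\psi\big((a_1;a_2)\big)=1$ for $a_1,a_2\in A$ says exactly that $A\subseteq\ker\Phi_\psi$, so $\Phi_\psi$ descends to $G/A$.

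For (1)$\Rightarrow$(2), I assume $A$ is polarizing and $\Phi_\psi(\dot x)=1$, and I aim to show $x\in A$. Consider the subgroup $\langle A,x\rangle$. By the bilinearity of commutators, any commutator of two of its elements is (modulo $\ker\psi$) a product of terms of the form $\psi\big((a_1;a_2)\big)$, $\psi\big((x;a)\big)^{\pm n}$, and $\psi\big((x;x)\big)^n$; the first vanish by subordination of $A$, the second by our hypothesis, and the third trivially. Hence $\langle A,x\rangle$ is subordinate to $\psi$, and its closure $H:=\overline{\langle A,x\rangle}$ is also subordinate since $\{(u,v):\psi\big((u;v)\big)=1\}$ is closed in $G\times G$ by continuity of the commutator and of $\psi$. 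The maximality of $A$ now forces $H=A$, whence $x\in A$.

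For (2)$\Rightarrow$(1), I argue by contrapositive. If $A$ is not polarizing, there is a strictly larger closed subordinate subgroup $A'\supsetneq A$. Picking $x\in A'\setminus A$, subordination of $A'$ gives $\psi\big((x;a)\big)=1$ for all $a\in A\subseteq A'$, so $\Phi_\psi(\dot x)=1$ while $\dot x\neq \dot e$ in $G/A$, contradicting injectivity.

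The only real subtlety is the passage to the closure in (1)$\Rightarrow$(2), which is needed because the definition of polarizing subgroup is phrased in terms of closed subordinate subgroups; this is handled by a one-line continuity argument. Everything else reduces to the bilinearity of commutators in a two-step nilpotent group.
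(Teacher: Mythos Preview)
Your proof is correct and follows essentially the same approach as the paper. The paper's argument for (2)$\Rightarrow$(1) is identical to yours (contrapositive via a strictly larger subordinate subgroup), while for (1)$\Rightarrow$(2) the paper simply refers back to an observation made in the proof of the preceding lemma; your explicit argument via $\overline{\langle A,x\rangle}$ spells out exactly what that observation amounts to.
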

\begin{proof}
(1) $\Rightarrow$ (2) was already observed in the proof of the previous lemma.
To see the converse, assume that $L$ is a closed subgroup of  $G$ which is subordinate to $\psi$ and which properly contains $A$. Then $\psi\big((x;y)\big)=1$ for all $a\in L, y\in A$, since $A\subseteq L$. Hence 
$L/A$ lies in the kernel of $\Phi_\psi$.
\end{proof}

\section{Nilpotent Lie pairs}\label{section-Lie-pair-construction}

In this section we want to give our notion of Lie-pairs $(G,\G)$, which we want to use to attack the problem of a more general Kirillov theory for locally compact nilpotent groups. The conditions we use here are much stronger than the condition of an ``elementary exponentiable'' group as introduced by Howe in \cite{How}. However, we want to give a set of conditions which allows to prove a version of Kirillov's orbit method without (almost) no further restrictions on the structure of the underlying groups, but which is general enough to cover a large class of examples. In order to be successful, we need a notion of Lie pairs which is stable under passing to certain characteristic subgroups/algebras and quotients. 
 
Recall that if $R$ is a commutative Ring with unit, then 
an algebra $\G$ over $R$
is called a Lie algebra over $R$ if its multiplication, denoted
by $(X,Y) \mapsto [X,Y]$, satisfies the following identities:
\begin{itemize}
\item[(1)] $[X,X]=0$ and
\item[(2)] $
[[X,Y],Z]+[[Y,Z],X]+[[Z,X],Y]=0.$ (Jacobi-identity)
\end{itemize}
A topological Lie algebra over $R$ is a Lie algebra over $R$ with
a Hausdorff topology such that the Lie algebra operations $X
\mapsto -X$, $(X,Y) \mapsto X+Y$, and $(X,Y) \mapsto [X,Y]$ are
continuous with respect to this topology. We say that a Lie algebra $\G$ over $R$ is 
{\em nilpotent of length $l\in \NN$}, if every 
$l$-fold commutator is zero and $l$ is the smallest positive integer with this property.

\begin{definition}\label{definition-k-Lie-pair}
For $k \in \N$ or $k=\infty$, let $\Lambda_k$ denote the smallest subring of $\Q$  in which 
every prime number $p\leq k$ is invertible, i.e.,  $\Lambda_k:=\Z[\frac{1}{k!}]$ if 
$k<\infty$ and $\Lambda_\infty=\QQ$. Let $G$ be a locally compact, second countable
group  and let $\G$ be a
nilpotent topological Lie algebra over $\Z$ with nilpotence length $l \leq k$.
We then call the pair $(G,\G)$ a
{\em nilpotent $k$-Lie pair} (of nilpotence length $l$) if the following properties are satisfied
\begin{itemize}
\item[(i)] The additive group $\G$ is a
$\Lambda_k$-module, extending the $\Z$-module structure of $\G$.
\item [(ii)] There exists a homeomorphism $\exp:\G
\rightarrow G$, with inverse denoted by $\log$, satisfying the
Campbell-Hausdorff formula (see the appendix for details on this formula).
\end{itemize} 
\end{definition}

Of course, any simply connected real Lie group $G$ with corresponding Lie-algebra $\G$ 
forms an $\infty$-Lie-pair $(G,\G)$ with respect to the ordinary exponential map $\exp:\G\to G$.
The same holds for any unipotent group over the $p$-adic numbers $\Q_p$. On the other extreme,
every abelian second countable locally compact group $G$ gives rise to a $1$-Lie-pair
$(G,\G)$ with $\G=G$ and $\exp=\id:G\to G$. We shall see more interesting examples in the final section of this paper.

 \begin{remark}\label{remark-f-in-Hom(G,W)-is-Lambda_k-linear} 
 {\bf (1)} If   $\G$ and
$\mathfrak{m}$  are $\Lambda_k$-modules and  $f \in
\Hom(\G,\mathfrak{m})$ is a group homomorphism, then it is an exercise to check that
$f$ is automatically $\Lambda_k$-linear.  In particular, it follows that 
for a Lie algebra $\G$ over $\Z$ such
that the additive group $\G$ is a $\Lambda_k$-module the commutator map $[.,.]: \G
\times \G \rightarrow \G$ is $\Lambda_k$-bilinear, thus $\G$ is a Lie algebra over $\Lambda_k$.
\\
{\bf (2)} If $(G,\G)$ is a nilpotent $k$-Lie pair and $x=\exp(X)$ for some $X\in \G$, then 
for every $m\in \NN$ with $m\leq k$ 
 we may define the (unique)``$m$th" root of $x$ as
$
x^{\frac{1}{m}}:= \exp(\frac{1}{m}X).
$
More generally, if  $\lambda=\frac{n}{m} \in \Lambda_k$  we may define
\[
x^{\lambda}:=\exp(\lambda X),
\]
which then implies that $\log(x^\lambda)=\lambda\log(x)$ for all $x\in G$ and $\lambda\in \Lambda_k$.
\end{remark}

\begin{definition}
Let $(G, \G)$ be a nilpotent $k$-Lie pair.
By a subalgebra of $\G$
we understand a Lie-subalgebra of $\G$, which is also a
$\Lambda_k$-module. An ideal of $\G$ is a subalgebra
$\n$ of $\G$ which has the additional property that
$[X,Y] \in \n$ for all $X \in \G$ and for all $Y \in
\n$.
\end{definition}

If $(G, \G)$ is a nilpotent $k$-Lie pair  and
if $\mathfrak{h}$ is a subalgebra of $\G$, then it follows
directly from the Campbell-Hausdorff formula that
$H=\exp(\mathfrak{h})$ is a subgroup of $G$ and hence that $(\h, H)$ 
is again a $k$-Lie-pair.  We need to answer the question, which closed subgroups 
of $G$ appear in this way.
The following  theorem is  related to Howe's \cite[Proposition 3]{How}.


\begin{theorem}\label{k-complete-subgroups-give-subalgebras}
Let $k \in \N\cup\{\infty\}$ and let $(G, \G)$ be a nilpotent $k$-Lie pair. Then, for a closed subgroup $H$ of $G$ the following are equivalent:
\begin{enumerate}
\item $H$ is {\em exponentiable}, i.e., $\h=\log(H)$ is a subalgebra of $\G$.
\item $H$ is {\em $k$-complete} in the sense 
that $x^{\lambda} \in H$ for all $x \in H$ and for all $\lambda \in\Lambda_k$.
\end{enumerate}
\end{theorem}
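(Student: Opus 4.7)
The direction $(1) \Rightarrow (2)$ is immediate: if $\h := \log(H)$ is a $\Lambda_k$-subalgebra, then for $x = \exp(X) \in H$ and $\lambda \in \Lambda_k$ one has $\lambda X \in \h$, hence $x^\lambda = \exp(\lambda X) \in \exp(\h) = H$, which is precisely $k$-completeness.

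For the converse, set $\h := \log(H)$, which is closed in $\G$ because $\exp$ is a homeomorphism. The $k$-completeness of $H$ translates directly into closure of $\h$ under $\Lambda_k$-scaling, while the subgroup property of $H$ gives closure of $\h$ under negation and under the Campbell--Hausdorff operation $\mathrm{CH}(X,Y) := \log\bigl(\exp(X)\exp(Y)\bigr)$. What remains is additive closure and bracket closure. I propose to prove, more generally, the purely algebraic statement: any subset $\h \subseteq \G$ closed under negation, $\mathrm{CH}$, and $\Lambda_k$-scaling is a $\Lambda_k$-subalgebra. This I would argue by induction on the nilpotence length $l$ of $\G$; the base case $l = 1$ is trivial, since $\G$ is abelian and $\mathrm{CH}$ reduces to $+$.

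For the inductive step ($l \geq 2$), let $\z$ be the last nonzero term of the lower central series of $\G$, a central abelian ideal with $\G/\z$ of nilpotence length $l-1$. The projection $\bar\h := q(\h) \subseteq \G/\z$ inherits all three closure properties, so by the inductive hypothesis $\bar\h$ is a Lie subalgebra. Hence for any $X, Y \in \h$ there exist $X', Y' \in \h$ with $W_1 := (X+Y) - X' \in \z$ and $W_2 := [X,Y] - Y' \in \z$. Since $\z$ is central, $\mathrm{CH}(X', W_1) = X'+W_1 = X+Y$ and $\mathrm{CH}(Y', W_2) = [X,Y]$, so once $W_1, W_2$ are placed in $\h$, the proof is complete. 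Note that $\h \cap \z$ is already a $\Lambda_k$-submodule of $\z$, by the abelian base case applied inside $\z$.

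To realize $W_1, W_2$ inside $\h \cap \z$, I would exploit the polynomial nature of $\lambda \mapsto \mathrm{CH}(\lambda X, Y)$ and of the commutator variant $\lambda \mapsto \log\bigl(\exp(\lambda X)\exp(Y)\exp(-\lambda X)\exp(-Y)\bigr)$: each is a polynomial in $\lambda$ of degree $\leq l-1$ whose values at every $\lambda \in \Lambda_k$ lie in $\h$, and whose coefficients are explicit iterated brackets of $X$ and $Y$. Evaluating at $\lambda = 0, 1, \ldots, l-1$ and inverting the associated Vandermonde matrix --- whose determinant $\prod_{0 \leq i < j \leq l-1}(j-i)$ has only prime factors $\leq l-1 < k$, hence is a unit in $\Lambda_k = \ZZ[1/k!]$ --- isolates the individual coefficients. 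The principal obstacle is the apparent circularity of this Vandermonde step: the extraction requires $\h$ to be closed under $\Lambda_k$-linear combinations, which is precisely the additivity we are trying to prove. This is overcome by performing the extraction stepwise along the descending central series, so that at each filtration level the needed additivity is supplied either by the inductive hypothesis in the appropriate quotient or by the abelian case inside $\z$, ultimately placing $W_1, W_2$ in $\h$ as required.
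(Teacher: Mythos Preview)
Your direction $(1)\Rightarrow(2)$ is fine and matches the paper. For $(2)\Rightarrow(1)$, however, the argument has a genuine gap precisely at the point you flag yourself. The Vandermonde extraction of the coefficients of $\lambda\mapsto\mathrm{CH}(\lambda X,Y)$ (or of the commutator variant) requires forming $\Lambda_k$-linear combinations of the values $Q(0),\ldots,Q(l-1)\in\h$, and that presupposes additive closure of $\h$. Your proposed resolution --- ``perform the extraction stepwise along the descending central series'' --- is not spelled out, and when one tries to make it precise one runs into the same difficulty one level down. Knowing that $\bar\h\subseteq\G/\z$ is a subalgebra tells you only that $X+Y$ and $[X,Y]$ lie in $\h+\z$; to push the defects $W_1,W_2$ into $\h\cap\z$ you would need to control, inside $\h$, the higher-order Campbell--Hausdorff correction terms, and doing so term by term again requires either additivity in $\h$ or a separate mechanism for absorbing $\Lambda_k$-multiples of iterated brackets into $\h$. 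The Vandermonde device by itself does not supply such a mechanism.

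The paper avoids this circularity entirely by a different route: it never forms linear combinations. The key step (Lemma~\ref{lemma-log(h)+D=log(h')+sum-G_t}, resting on Jennings's formula in Proposition~\ref{prop-commutators-in-log=log-of-commutators}) shows that for $h\in H$, any single term $\mu D$ with $D$ an $r$-fold Lie commutator in $\log(H)$ and $\mu\in\Lambda_k$ can be absorbed into a group element: one rewrites $\mu[\log x_1,\ldots,\log x_r]=[\log(x_1^{\mu}),\ldots,\log x_r]$ and then uses the Campbell--Hausdorff formula together with Jennings to obtain $\log(h)+\mu D=\log(h')+\sum_t G_t$ with $h'=h\cdot(x_1^{\mu};\ldots;x_r)\in H$ and each $G_t$ a $\Lambda_k$-combination of commutators of length $\geq r+1$. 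Iterating this one summand at a time pushes all correction terms past the nilpotence length, so they vanish and one is left with $\log(x)+\log(y)=\log(x_l)\in\log(H)$. The same trick handles $[\log x,\log y]$. The point is that each step uses only group multiplication, $k$-th roots, and the $\mathrm{CH}$ map --- operations under which $H$ is already known to be closed --- rather than addition in $\G$. If you try to complete your stepwise argument rigorously you will find yourself reinventing exactly this absorption lemma, at which point the Vandermonde step becomes superfluous.
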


Note that the direction (1) $\Rightarrow$ (2) follows directly from the equation $x^\lambda=\exp(\lambda X)$ for $\lambda\in \Lambda_k$.
The proof of the converse direction is given in the appendix (see Lemma \ref{lemma-k-complete-subgroup->subalgebra}). There we also formulate a number of important consequences which imply that certain characteristic subgroups of $G$ are exponentiable.

In order to get good results for the representation theory of $G$, we need some additional structure 
which allows to have an analogue of the linear dual $\G^*=\Hom(\G,\R)$ 
in the case of a real Lie algebra $\G$. Note that if we consider a real Lie algebra $\G$ simply
as a real vector space, then the linear dual  $\G^*=\Hom(\G,\RR)$ can be identified with the 
Pontrjagin dual $\widehat{\G}=\Hom(\G, \TT)$ of the underlying abelian group via the map
$$\Phi: \Hom(\G,\R)\to \widehat{\G}; f\mapsto \epsilon\circ f,$$
where $\epsilon:\R\to \TT$ is the basic character $\epsilon(x)=e^{2\pi i x}$. 
Using this isomorphism, it is possible to exploit the linear structure of $\G$ for the study of the 
representations of the corresponding Lie group $G$.  We now want to introduce a substitute for 
the pair $(\R,\epsilon)$ in more general situations:

\begin{definition}\label{def-dualizable}
Let $(G,\G)$ be a $k$-Lie-pair for some $k\in \N\cup \{\infty\}$. Suppose that
$\mathfrak{m}$ is a second countable locally compact $\Lambda_k$-module and 
 $\epsilon: \mathfrak{m}\to \TT$ is a character such that 
\begin{itemize}
\item[(a)] the kernel $\ker\epsilon\subseteq \mathfrak{m}$ does not contain any nontrivial
$\Lambda_k$-submodule of $\mathfrak{m}$.
\item[(b)] The map
\[
\Phi: \Hom(\G,\mathfrak{m}) \rightarrow \widehat{\G}, \; f \mapsto
\epsilon \circ f
\]
is an isomorphism of groups, where $\Hom(\G,\mathfrak{m})$
denotes the continuous group homomorphisms from $\G$ to
$\mathfrak{m}$ and $\widehat{\G}$ denotes the Pontrjagin dual of
the abelian group $\G$.
\item[(c)] For every closed $\Lambda_k$-subalgebra $\mathfrak{h}$ of $\G$
and for any $f \in \Hom(\mathfrak{h},\mathfrak{m})$ there exists
a map $\tilde{f} \in \Hom(\G,\mathfrak{m})$ such that
$\tilde{f}|_{\mathfrak{h}}=f$.
\end{itemize}
Then we call  $(G,\G)$ an {\em $(\mathfrak{m},\epsilon)$-dualizable nilpotent $k$-Lie pair} (or just a {\em dualizable nilpotent $k$-Lie pair} if confusion seems unlikely). 
We write
$\G^*:=\Hom(\G,\m)$
and we equip $\G^*$ with the compact open topology. 
\end{definition}

\begin{remark} 
Since $\G$ and $\m$ are second countable locally compact groups, the same is true 
for $\G^*$ and  the 
isomorphism $\Phi:\G^*\to\widehat{G}; \;\Phi(f)=\epsilon\circ f$ is a continuous isomorphism of groups, 
hence a topological isomorphism by the open mapping theorem for second countable locally compact groups.
Note also that it follows from Remark \ref{remark-f-in-Hom(G,W)-is-Lambda_k-linear} that any 
$f\in \Hom(\G,\m)$ is automatically a $\Lambda_k$-module map.
\end{remark}

\begin{lemma}\label{lemma-subalgebras-give-lie-pairs}
let $(G, \G)$ be an $(\mathfrak{m},\epsilon)$-dualizable nilpotent $k$-Lie pair. 
Then so are $(H,\h)$, if $\h$ is a closed subalgebra
 of $\G$  and $H=\log(\h)$, and $(G/N,\G/\n)$
if $\n$ is a closed ideal in $\G$ and $N=\exp(\n)$.
\end{lemma}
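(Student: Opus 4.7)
The structural axioms (i) and (ii) of Definition \ref{definition-k-Lie-pair} are straightforward in both cases: $\h$ inherits its $\Lambda_k$-module structure and Lie bracket from $\G$, and $\exp$ restricts to a homeomorphism $\h\to H$ satisfying Campbell--Hausdorff; dually, $\G/\n$ carries the quotient Lie algebra over $\Lambda_k$, and the Campbell--Hausdorff formula combined with $[\G,\n]\subseteq\n$ shows that $N=\exp(\n)$ is closed and normal in $G$, so $\exp$ descends to a homeomorphism $\G/\n\to G/N$ again satisfying Campbell--Hausdorff. In both cases the nilpotence length does not increase, so it remains $\leq k$. Condition (a) of Definition \ref{def-dualizable} involves only $(\m,\epsilon)$ and is automatic in both cases.

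For the subalgebra pair $(H,\h)$, the key point for (b) is the following principle used throughout: if $f\in\Hom(\G,\m)$ and $S\subseteq\G$ is a $\Lambda_k$-submodule with $f(S)\subseteq\ker\epsilon$, then in fact $f|_S=0$, since $f(S)$ is itself a $\Lambda_k$-submodule of $\m$ contained in $\ker\epsilon$. Applied to $S=\h$, this gives injectivity of $\Phi_\h$. For surjectivity, I extend any $\chi\in\widehat{\h}$ to $\tilde\chi\in\widehat{\G}$ by Pontrjagin duality, write $\tilde\chi=\epsilon\circ\tilde f$ using (b) for $\G$, and restrict $\tilde f$ to $\h$. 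The map $\Phi_\h$ is clearly continuous and the spaces involved are second countable locally compact, so the open mapping theorem upgrades it to a topological isomorphism. Condition (c) is inherited for free: a closed subalgebra $\h'\subseteq\h$ is also closed in $\G$, so extensions to $\G$ given by (c) for $(G,\G)$ restrict to extensions to $\h$.

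For the quotient pair $(G/N,\G/\n)$, write $q:\G\to\G/\n$. Under the standard identifications
\[
\Hom(\G/\n,\m)\cong\{f\in\Hom(\G,\m):f|_\n=0\},\qquad \widehat{\G/\n}\cong\n^\perp=\{\chi\in\widehat{\G}:\chi|_\n=1\},
\]
the isomorphism $\Phi_\G$ restricts to the candidate map $\Phi_{\G/\n}$; the nontrivial half of this restriction again uses the $\Lambda_k$-submodule principle of the previous paragraph to upgrade $\epsilon\circ f|_\n=1$ to $f|_\n=0$. For (c), given a closed subalgebra $\h'\subseteq\G/\n$ and $f'\in\Hom(\h',\m)$, I pull back to the closed subalgebra $\widetilde{\h}:=q^{-1}(\h')\supseteq\n$ and consider $f'\circ q|_{\widetilde{\h}}\in\Hom(\widetilde{\h},\m)$. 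By (c) for $(G,\G)$ this extends to some $g\in\Hom(\G,\m)$, and the crucial observation is that $g$ automatically vanishes on $\n$: since $\n\subseteq\widetilde{\h}$, the extension agrees on $\n$ with $f'\circ q|_{\widetilde{\h}}$, which is zero there. Hence $g$ descends to the required extension $\tilde f'\in\Hom(\G/\n,\m)$ with $\tilde f'|_{\h'}=f'$.

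The only mildly delicate point is the repeated use of (a) via the $\Lambda_k$-submodule trick; it is exactly what allows one to identify $\Hom(\G/\n,\m)$ with the kernel of the restriction map $\Hom(\G,\m)\to\Hom(\n,\m)$, rather than merely with the preimage of the trivial character on $\n$. Once this observation is in hand, both verifications reduce to routine Pontrjagin duality together with the three hypotheses already assumed for $(G,\G)$.
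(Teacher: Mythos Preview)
Your proof is correct and complete; the paper itself omits the proof entirely, writing only ``We omit the straightforward proof.'' Your argument is exactly the natural one the authors had in mind, and your explicit identification of the role of condition (a)---that $\ker\epsilon$ contains no nontrivial $\Lambda_k$-submodule, so that $\epsilon\circ f|_S=1$ forces $f|_S=0$ for any $\Lambda_k$-submodule $S$---is the one substantive point that makes the verification of (b) and (c) go through in both the subalgebra and quotient cases.
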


\begin{proof} We omit the straightforward proof.\end{proof}

\section{The Kirollov map}\label{section-Kirillov-map}

In this section we shall always assume 
that $(G, \G)$ is an $(\m,\epsilon)$-dualizable  nilpotent
$k$-Lie pair as in Definition \ref{def-dualizable}. 

\begin{definition}
Let $f \in \G^*$. A closed
subalgebra $\mathfrak{r}$ of $\G$ is said to be {\em $f$-subordinate} if
\[
f([\mathfrak{r}, \mathfrak{r}])= \{0\}.
\]
If $\mathfrak{r}$ is maximal with this property we say that
$\mathfrak{r}$ is a {\em polarizing subalgebra} for $f$.
\end{definition}


\begin{remark}\label{character->representation}
{\bf (a)}  Let $\mathfrak{r}$ be an $f$-subordinate subalgebra of $\G$
 and let $R=\exp{\r}$. It follows then from the Campbell-Hausdorff formula and the 
fact that $f$ vanishes on commutators in $\r$ that
$\varphi_{f}:R\to \TT$, defined by
\begin{equation}\label{definition-varphi-f}
\varphi_{f}(\exp X):= \epsilon(f(X)) \quad \mbox{for all}\; X \in
\mathfrak{r},
\end{equation}
is a unitary character of  $R$. 
In what follows we shall always stick to the notation $\varphi_f$ for this character, even when viewed on different polarizing subalgebras at the same time!
\medskip
\\
{\bf (b)} By an easy application of Zorn's lemma we see that for every subalgebra $\h\subseteq \G$ which is subordinate to some $f\in \G^*$, there exists a polarizing subalgebra $\r$ for $f$ which contains $\h$.
But we shall later see that for most of our constructions we need polarizing subalgebras with special properties,  which we shall introduce in Remark \ref{rem-standard} below.
Note that a given element $f \in \G^*$ may have different,
non-isomorphic polarizing subalgebras. 
%

\end{remark}

If $(G,\G)$ is a $k$-Lie pair with nilpotence length $l\leq k$, then the {\em adjoint action} of $G$ on $\G$ is defined by
the homomorphism 
$$\Ad:G\to \GL(\G); \Ad(x)(Y):=\log(x\exp(Y)x^{-1}).$$
If $x=\exp(X)$ for some $X\in \G$, then it follows from standard computations  that
\begin{equation}\label{eq-Ad}
\Ad(\exp(X))(Y)=\exp(\ad(X))(Y)=\sum_{n=0}^l\frac{1}{n!}\ad(X)^n(Y).
\end{equation}

Now if $(G,\G)$ is an $(\m,\epsilon)$-dualizable $k$-Lie pair, then the coadjoint action of $G$ on $\G^*$ is given, as usual, by
\begin{equation}\label{eq-coadjoint}
\Ad^*(x)(f)=f\circ \Ad(x^{-1})
\end{equation}

We are now going to  provide a ``standard way'' to recursively construct a polarizing subalgebra
 for a given $f\in \G^*$ with certain nice properties. We start with a lemma which allows to assume that 
$f$ is faithful on $\z(\G)$:

\begin{lemma}\label{largest-ideal-in-kernel-of-character}
Let $(G, \G)$ be an $(\m,\epsilon)$-dualizable nilpotent $k$-Lie pair, and let
$f \in \G^*$. Then there exists a largest ideal $\mathfrak{j}$ inside
the kernel of $f$. Let $q:\G\to \G/\j$ denote the quotient map and let 
$\tilde{f}\in (\G/\j)^*$ be defined by $\tilde{f}(q(X))=f(X)$. Then $\tilde{f}$ is faithful
on  $\z(\G/\mathfrak{j})$ and if $\mathfrak{r}\subseteq  \G$ is a subalgeba of $\G$, then $\r$ is
polarizing for $f$ if and only if
$\tilde{\mathfrak{r}}:= q(\mathfrak{r})$ is a polarizing
subalgebra for $\tilde{f}$.
\end{lemma}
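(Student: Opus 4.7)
The plan is to establish the three parts of the lemma in sequence; the only delicate point throughout is keeping track of topological closures, which is always handled by continuity of the Lie bracket and of $f$ together with closedness of $\ker f$.

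First, I would define $\j$ to be the sum of all closed Lie ideals of $\G$ contained in $\ker f$. Since $f$ is continuous and $\Lambda_k$-linear, $\ker f$ is a closed $\Lambda_k$-submodule, so $\j$ lies in $\ker f$; the identity $[\G, \sum_\alpha I_\alpha] \subseteq \sum_\alpha [\G, I_\alpha] \subseteq \sum_\alpha I_\alpha$ shows $\j$ is a Lie ideal, and the closure of any ideal inside $\ker f$ is again an ideal inside $\ker f$ and thus already belongs to the defining family, so $\j$ is itself closed. Hence $\j$ is the largest closed Lie ideal inside $\ker f$.

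For the faithfulness of $\tilde f$ on $\z(\G/\j)$, I would show that whenever $q(X) \in \z(\G/\j)$ satisfies $\tilde f(q(X)) = 0$, one has $X \in \j$. The two hypotheses translate to $[X, \G] \subseteq \j$ and $f(X) = 0$. I would then consider $\I := \overline{\j + \Lambda_k X}$: from $[\G, \j + \Lambda_k X] \subseteq \j + [\G, \Lambda_k X] \subseteq \j$ and continuity of the bracket one obtains that $\I$ is a closed Lie ideal, and since $f$ vanishes on $\j + \Lambda_k X$ continuity gives $\I \subseteq \ker f$. Maximality of $\j$ then forces $\I \subseteq \j$, hence $X \in \j$.

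For the correspondence of polarizing subalgebras, the crucial step is to show that every $f$-polarizing $\r$ must contain $\j$. I would use the same kind of construction: $\overline{\r + \j}$ is a closed subalgebra (because $[\r + \j, \r + \j] \subseteq [\r,\r] + \j \subseteq \r + \j$, using that $\j$ is an ideal and $\r$ is a subalgebra), is $f$-subordinate by continuity, and obviously contains $\r$; maximality therefore forces $\overline{\r + \j} = \r$ and so $\j \subseteq \r$. Once this is in hand, $q$ induces an inclusion-preserving bijection between closed subalgebras of $\G$ containing $\j$ and closed subalgebras of $\G/\j$, under which $f$-subordination of $\s$ is equivalent to $\tilde f$-subordination of $q(\s)$ (each simply says $f([\s,\s]) = 0$). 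Maximality then transfers in both directions, giving the desired equivalence. The hard part of the whole argument is really just confirming that these closures behave as expected; no deeper obstacle is anticipated.
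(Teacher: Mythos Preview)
Your argument is correct and follows essentially the same strategy as the paper: build the largest ideal in $\ker f$, then obtain faithfulness on the center of the quotient by contradiction (a nontrivial element in $\ker\tilde f\cap\z(\G/\j)$ produces a strictly larger ideal inside $\ker f$). The differences are only in presentation: the paper invokes Zorn's lemma to get a maximal ideal and then observes that the sum of two maximal ideals in $\ker f$ is again one, whereas you construct $\j$ directly as the sum of all closed ideals in $\ker f$; and you supply the details for the polarizing correspondence (in particular the key observation that $\overline{\r+\j}$ is $f$-subordinate, forcing $\j\subseteq\r$), which the paper leaves implicit.
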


\begin{proof}
 Zorn's Lemma assures the existence of a
maximal ideal $\j$ in $\ker(f)$ while uniqueness is guaranteed by the fact that if
$\mathfrak{j_1},\mathfrak{j_2}$ are two such, then their sum would
also be one, so that $\j$ is indeed the largest ideal in $\ker(f)$. 
Note that $\j$ is automatically closed in $\G$.
Let $\tilde{f}\in (\G/\j)^*$ be as in the lemma. If it is not faithful on $\z(\G/\j)$, then 
$\tilde{\k}=\ker{\tilde{f}}\cap \z(\G/\j)$ is a nontrivial closed ideal in $\G/\j$ and 
its inverse image $\k$ in $\G$ is a closed ideal of $\G$ lying in $\ker f$ and strictly larger than 
$\j$, a contradiction.
\end{proof}

In a similar fashion we get the following lemma. 

\begin{lemma}\label{lemma-maximal-ideal-inside-kernel-pi-and-f}
Let $(G,\G)$ be an $(\m,\epsilon)$-dualizable nilpotent $k$-Lie pair, and let
$\pi \in \widehat{G}$. Then there 
exists a maximal exponentiable normal subgroup
$J$ of $G$ such that $J \subseteq \ker(\pi)$.
\end{lemma}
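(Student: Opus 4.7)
The plan is to apply Zorn's lemma to the collection $\mathcal{E}$ of all exponentiable normal subgroups of $G$ contained in the (closed, normal) subgroup $\ker\pi$, partially ordered by inclusion. Since $\{e\}\in\mathcal{E}$, the collection is nonempty, so the essential step is to verify that every chain in $\mathcal{E}$ has an upper bound inside $\mathcal{E}$; unlike in Lemma~\ref{largest-ideal-in-kernel-of-character}, I do not expect uniqueness of the maximal element for free (since $\overline{J_1J_2}$ need not obviously be exponentiable), which is presumably why the statement asks only for a maximal, not a largest, such subgroup.

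Given a chain $(J_\alpha)_{\alpha\in A}$ in $\mathcal{E}$, I would propose $H:=\overline{\bigcup_{\alpha\in A} J_\alpha}$ as its upper bound. The union $\bigcup_\alpha J_\alpha$ is a subgroup of $G$ because the $J_\alpha$ are totally ordered under inclusion, it is normal because each $J_\alpha$ is, and it lies in $\ker\pi$; passing to the closure preserves all three properties, the third one because $\ker\pi$ is closed. The only nontrivial point is to verify that $H$ is exponentiable, which by Theorem~\ref{k-complete-subgroups-give-subalgebras} amounts to showing that $H$ is $k$-complete, i.e.\ $x^\lambda\in H$ for every $x\in H$ and every $\lambda\in\Lambda_k$.

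Each $J_\alpha$ is $k$-complete, hence so is $\bigcup_\alpha J_\alpha$. To transfer $k$-completeness to the closure I would use that the power map $P_\lambda:G\to G$, $P_\lambda(x)=\exp(\lambda\log x)$, is continuous for every $\lambda\in\Lambda_k$ (it is a composition of the continuous maps $\log$, scalar multiplication by $\lambda$ on the $\Lambda_k$-module $\G$, and $\exp$). Then, using second countability of $G$, for $x\in H$ pick a sequence $x_n\in\bigcup_\alpha J_\alpha$ with $x_n\to x$; $k$-completeness gives $P_\lambda(x_n)\in\bigcup_\alpha J_\alpha\subseteq H$, and continuity gives $P_\lambda(x_n)\to P_\lambda(x)$, forcing $P_\lambda(x)\in H$ since $H$ is closed. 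Zorn's lemma then produces a maximal element $J\in\mathcal{E}$, which is the desired maximal exponentiable normal subgroup inside $\ker\pi$. The only real obstacle in the argument is this continuity step used to extend $k$-completeness from the union to its closure; every other verification is formal and follows directly from Theorem~\ref{k-complete-subgroups-give-subalgebras}.
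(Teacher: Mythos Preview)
Your proof is correct and follows the same Zorn's-lemma strategy as the paper, with the same candidate upper bound $H=\overline{\bigcup_\alpha J_\alpha}$. The only difference is in how you verify that $H$ is exponentiable: the paper works on the Lie-algebra side, observing directly that $\log(H)=\overline{\bigcup_\alpha \log(J_\alpha)}$ is a closed ideal of $\G$ (as the closure of a union of a chain of ideals), whereas you stay on the group side and pass $k$-completeness to the closure via continuity of $x\mapsto\exp(\lambda\log x)$; both arguments are equally valid.
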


\begin{proof}
To prove part $(i)$ we define
\[
\mathcal{M}:=\{I \mid I \; \hbox{is a normal exponentiable
subgroup of}\; G \; \hbox{and} \; I \subseteq \ker(\pi) \}.
\]
The set $\mathcal{M}$ is partially ordered by inclusion and since
$\{1_G \} \in \mathcal{M}$, it follows that $\mathcal{M}$ is
nonempty. Let $\mathcal{K}$ be a chain in $\mathcal{M}$. We claim
that the set
\[
J:= \overline{\bigcup_{I \in \mathcal{K}} I}
\]
is an upper bound for $\mathcal{K}$. To see this it suffices to show that $J$ is an
exponentiable subgroup of $G$. But this follows from the fact that 
$\log(J)=\overline{\cup_{I\in \mathcal{K}}\log{I}}$ is an ideal of $\G$, since 
$\log(I)$ is an ideal of $\G$ for all $I\in \mathcal{K}$.
\end{proof}

Recall that for any $\pi\in \widehat{G}$ the central character of $\pi$ is the unique character $\psi_\pi$ of the center $Z$ of $G$ such that $\pi|_Z=\psi_\pi\cdot 1_{H_\pi}$. 
 
\begin{lemma}\label{lemma-faithful-central-character}
Suppose that $(G,\G)$ is an $(\m,\epsilon)$-dualizable $k$-Lie pair and let $\pi\in \widehat{G}$ such that 
the group kernel of $\pi$ does not contain any non-trivial exponentiable subgroup. 
Let $f\in \G^*$ such that the central character $\psi_\pi$ equals $\varphi_f\in \widehat{Z}$.
Then $f$ is faithful on $\z=\z(\G)$.
\end{lemma}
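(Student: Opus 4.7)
The plan is a direct proof by contradiction: assuming $f$ is not faithful on $\z$, I will exhibit a non-trivial exponentiable normal subgroup of $G$ contained in the group kernel of $\pi$, contradicting the hypothesis.

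First I would set $\k := \ker(f|_\z) \subseteq \z$. Since $f:\G\to\m$ is continuous and $\z(\G)$ is closed in $\G$ (as the centralizer of $\G$ with respect to the continuous bracket), the subset $\k$ is closed in $\G$. By Remark \ref{remark-f-in-Hom(G,W)-is-Lambda_k-linear}, any continuous group homomorphism between $\Lambda_k$-modules is automatically $\Lambda_k$-linear, so $f|_\z$ is $\Lambda_k$-linear and hence $\k$ is a closed $\Lambda_k$-submodule of $\z$. Because $\z$ is abelian, $\k$ is automatically closed under the bracket, so it is a closed subalgebra of $\G$. Moreover, since $\z$ lies in the center of $\G$, we have $[\G,\k] \subseteq [\G,\z] = \{0\} \subseteq \k$, so $\k$ is in fact a closed ideal of $\G$.

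Next I set $K := \exp(\k)$. By the Campbell-Hausdorff formula (or Lemma \ref{lemma-subalgebras-give-lie-pairs}), $K$ is a closed exponentiable subgroup of $G$; since $\k$ is an ideal, $K$ is normal in $G$ (in fact, $K \subseteq Z$). I then check that $K \subseteq \ker(\pi)$: for any $X \in \k$, the hypothesis $\psi_\pi = \varphi_f$ on $Z$ gives
\[
\pi(\exp X) = \psi_\pi(\exp X)\cdot \mathrm{Id}_{H_\pi} = \varphi_f(\exp X)\cdot \mathrm{Id}_{H_\pi} = \epsilon(f(X))\cdot \mathrm{Id}_{H_\pi} = \mathrm{Id}_{H_\pi},
\]
so $\exp X$ belongs to the group kernel of $\pi$. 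Finally, if $f$ were not faithful on $\z$, then $\k \neq \{0\}$, so $K$ would be a non-trivial exponentiable normal subgroup of $G$ contained in $\ker(\pi)$, contradicting the hypothesis on $\pi$. Therefore $f|_\z$ is injective, completing the proof.

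I do not expect any genuine obstacles here: the argument is entirely mechanical once one observes that $\ker(f|_\z)$ is automatically an ideal (because $\z$ is central) and that the identification $\varphi_f = \psi_\pi$ on the whole center translates kernel statements about $f|_\z$ directly into kernel statements about $\pi|_Z$.
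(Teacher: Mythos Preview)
Your proof is correct and follows exactly the same idea as the paper's one-line argument, which simply notes that $\z\cap\ker f$ is an ideal in $\G$; you have filled in the routine details (closedness, $\Lambda_k$-linearity, $K=\exp(\k)\subseteq\ker\pi$) that the paper leaves implicit.
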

\begin{proof}
This follows directly from the fact that $\z\cap \ker f$ is an ideal in $\G$.
\end{proof}

\begin{remark}\label{remark-nondegenerate}
Let $(G,\G)$ be an $(\m,\epsilon)$-dualizable $k$-Lie pair, let $A\subseteq Z_2(G)$ be a choice of a maximal abelian subgroup of $Z_2(G)$ and let $N\subseteq G$ denote the centralizer of $A$ in $G$. We checked  in Lemma \ref{lemma-log(A)-is-a-subalgebra} that 
$\a=\log(A)$ is a maximal abelian subalgebra of $\z_2(\G)$ and $\n=\log(N)$ is the centralizer of $\a$ in $\G$. Let $f\in \G^*$ be faithful on $\z=\z(\G)$. Then one easily checks that the bihomomorphism
\begin{equation}\label{eq-nondegenerate1}
\Psi_f:\G/\n\times\a/\z\to \m; \Psi_f(\dot X, \dot Y)=f([X,Y])
\end{equation}
is nondegenerate. This implies that the 
corresponding homomorphisms 
\begin{equation} \label{eq-nondegenerate2}
\Psi_{\G/\n}:\G/\n\to (\a/\z)^*; \dot X\mapsto \Psi_f(\dot X, \cdot\,)\quad \text{and}\quad 
\Psi_{\a/\z}: \a/\z\to (\G/\n)^*; \dot Y\mapsto \Psi_f(\,\cdot , \dot Y)
\end{equation}
are injective with dense range. Injectivity is clear and the fact that they have dense ranges follows from 
the fact  that via the identifications $(\a/\z)^*\cong \widehat{\a/\z}$ and $(\G/\n)^*\cong \widehat{\G/\n}$ given by $ g\mapsto \epsilon\circ g$ the maps in (\ref{eq-nondegenerate2}) can be identified with the 
homomorphisms  
$\G/\mathfrak{n}\to  \widehat{\mathfrak{a}/\mathfrak{z}}$ and 
$\mathfrak{a}/\mathfrak{z}\to \widehat{\G/\mathfrak{n}}$  corresponding to the nondegenerate 
bicharacter 
 $\epsilon\circ \Psi_f: \G/\n\times\a/\z\to \TT$. 
 
Now a short exercise, using the Campbell-Hausdorff formula and the fact that $[\G,\a]\subseteq \z$,
shows that for $X\in \G$, $Y\in \a$
and $x=\exp(X)$, $y\in \exp(Y)$ we have $\exp([X,Y])=(x;y)$, the commutator of $x$ and $y$ in $G$.
Thus, Identifying $\a$ with $A$ and $\n$ with $N$ via the exponential map, the 
pairing $\epsilon\circ \Psi_f$ is identified with 
\begin{equation} \label{eq-nondegenerate3}
\Phi_f:G/N\times A/Z\to\TT; \;\Phi_f(\dot x,\dot y)=\varphi_f\big((x;y)\big),
\end{equation}
which shows that this is also a nondegenerate bicharacter. 

\end{remark}


 Recall that if a group $G$ acts on a topological space $X$, then two elements $x,y\in X$ lie in the same $G$-quasi orbit, if $x\in \overline{G\cdot y}$ and $y\in \overline{G\cdot x}$. This determines an equivalence relation on $X$ and the equivalence class  $\mathcal O^G(x)$ of $x$ is called the {\em $G$-quasi-orbit of $x$.}

In what follows, if $\h\subseteq \G$ is any subset of $\G$, 
then $\h^\perp:=\{g\in \G^*: g(\h)=\{0\}\}$ denotes the annihilator of $\h$ in $\G^*$. 

\begin{lemma}[{cf. \cite[Lemma 11]{How}}]\label{lem-polarization}
Let $A, N\subseteq G$ as above, and  let $f\in \G^*$ such that $f$ is  faithful on $\z=\z(\G)$. Suppose that
 $\r\subseteq \n=\log(N)$ is a polarizing subalgebra for $f|_\n\in \n^*$ and let $R=\exp(\r)$. Then $\r$ is a polarizing subalgebra for $f$. Moreover, the following are true:
 \begin{enumerate}
\item  If for all $y\in N$ we have $\Ad^*(y)f|_\n\in (f+\r^\perp)|_\n \Leftrightarrow y\in R$,
then  for all $x\in G$ we have $\Ad^*(x)f\in f+\r^\perp\Leftrightarrow x\in R$.
 \item If $(f+\r^{\perp})|_\n\subseteq \overline{\Ad^*(R)f|_\n}$, then we also have 
 $f+\r^\perp\subseteq \overline{\Ad^*(R)f}$. 
 \item Suppose that $\r$ is a polarization for $(f+h)|_\n$ for all $h\in \r^\perp$ and 
that $(f+\r^{\perp})|_\n=\mathcal O^R(f|_\n)$ for the coadjoint action of $R$ on $\n^*$. 
Then $\r$ is a polarization for $f+h$ for all $h\in \r^\perp$ and $f+\r^\perp=\mathcal O^R(f)$ for the coadjoint action of $R$ on $\G^*$.
 \end{enumerate}
\end{lemma}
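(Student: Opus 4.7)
The plan is to first prove the (unnumbered) claim that $\r$ polarizes $f$ in all of $\G$, and then to establish (1), (2), (3) in sequence. For the unnumbered claim, I would first verify the inclusions $\a\subseteq\r$ and $\z\subseteq\r$: since $[\n,\a]=0$ the sum $\r+\a$ is still a subordinate subalgebra of $\n$, forcing $\a\subseteq\r$ by maximality; the same argument with $\z$ central gives $\z\subseteq\r$. Now let $\r'\supseteq\r$ be any subalgebra of $\G$ subordinate to $f$. For $X\in\r'$ and $Y\in\a\subseteq\r$, $f([X,Y])=0$; since $\a\subseteq\z_2$ gives $[X,Y]\in[\G,\a]\subseteq\z$ and $f$ is faithful on $\z$, we get $[X,Y]=0$, so $X\in\n$. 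Thus $\r'\subseteq\n$, and maximality of $\r$ inside $\n$ yields $\r'=\r$.

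For (1), the $\Leftarrow$ direction is a direct Campbell--Hausdorff computation: for $x=\exp X\in R$ and $Y\in\r$, $\Ad(x^{-1})Y-Y=\sum_{n\geq 1}\frac{(-1)^n}{n!}\ad(X)^n(Y)\in[\r,\r]$, which $f$ annihilates. For $\Rightarrow$, restricting the condition $\Ad^*(x)f\in f+\r^\perp$ to $\a\subseteq\r$ and using $[\G,\a]\subseteq\z$ (which truncates the Campbell--Hausdorff series after the first-order term for $Y\in\a$), one obtains $f([X,\a])=0$ with $[X,\a]\subseteq\z$; faithfulness of $f$ on $\z$ then gives $[X,\a]=0$, so $x\in N$, and the biconditional hypothesis of (1) forces $x\in R$. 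For (3), I would combine the unnumbered claim with (2). Since $\z\subseteq\r$, every $h\in\r^\perp$ vanishes on $\z$, so $g:=f+h$ remains faithful on $\z$; together with the hypothesis that $\r$ polarizes $g|_\n$, the unnumbered claim applied to $g$ shows $\r$ polarizes $g$ in $\G$. For the orbit equality, note that $g+\r^\perp=f+\r^\perp$; combined with $(f+\r^\perp)|_\n=\mathcal O^R(f|_\n)$, one gets $(g+\r^\perp)|_\n=\mathcal O^R(g|_\n)\subseteq\overline{\Ad^*(R)(g|_\n)}$, so (2) applies to both $f$ and $g$, yielding $g\in\overline{\Ad^*(R)f}$ and $f\in\overline{\Ad^*(R)g}$. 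Hence $g\sim f$ under $R$-quasi-orbit, giving $f+\r^\perp\subseteq\mathcal O^R(f)$; the reverse inclusion is immediate since $\Ad^*(R)f\subseteq f+\r^\perp$ and the latter is closed.

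For (2), I would first observe that $\n$ is actually an ideal of $\G$: by the Jacobi identity, for $X\in\n$, $Y\in\G$ and $Z\in\a$, $[[X,Y],Z]=[X,[Y,Z]]-[Y,[X,Z]]=[X,[Y,Z]]\in[X,\z]=0$, using $[Y,\a]\subseteq\z$ and $[X,\a]=0$. Consequently, for $X\in\r\subseteq\n$ and any $Y\in\G$, every iterated bracket $\ad(X)^n(Y)$ with $n\geq 1$ lies in $\n$, so the perturbation $\phi_X:=\Ad^*(\exp X)f-f\in\r^\perp$ is entirely determined by the restriction $f|_\n$. Given $h\in\r^\perp$, the hypothesis provides a net $r_i=\exp X_i\in R$ with $\phi_{X_i}|_\n\to h|_\n$ in $\n^*$; the task is to upgrade this to $\phi_{X_i}\to h$ in $\G^*$. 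The principal obstacle is that the restriction map $\r^\perp\to\{g|_\n:g\in\r^\perp\}$ has nontrivial kernel $\n^\perp$, so convergence on $\n$ does not automatically imply convergence on $\G$. I would address this by exploiting the explicit Campbell--Hausdorff expression---whose evaluation on any $Y\in\G$ is a polynomial in $X_i$ with values accessed through $f|_\n$---together with the extension property (c) of the dualizable pair, arguing that on any compact $K\subseteq\G$ the values $\phi_{X_i}(Y)$ for $Y\in K$ can be forced to track $h(Y)$ uniformly, possibly after passing to a suitable subnet. This step is the main technical difficulty of the lemma.
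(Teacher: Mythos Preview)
Your treatment of the unnumbered claim, of (1), and of (3) matches the paper's proof essentially line for line. The issue is entirely in (2), where you correctly identify the obstacle but do not overcome it.

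Your plan for (2) is to take a net $x_i\in R$ with $\phi_{X_i}|_\n\to h|_\n$ and argue directly that $\phi_{X_i}\to h$ in $\G^*$, invoking the Campbell--Hausdorff expression as ``polynomial in $X_i$'' and the extension property~(c). Neither tool gives you control here: property~(c) is about extending a single functional, not about lifting convergence; and ``polynomial in $X_i$'' has no useful meaning in a general $\Lambda_k$-module. The net $x_i$ need not be bounded or admit a convergent subnet, so there is no reason the values $\phi_{X_i}(Y)$ for $Y\notin\n$ should track $h(Y)$. In fact they generally will not: the $\n^\perp$-ambiguity you yourself flagged is real, and the same net $x_i$ will typically \emph{not} work on all of $\G$.

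The paper does not try to force this. Instead it decouples the problem into two pieces. First, by openness of the restriction map $\G^*\to\n^*$ (equivalently, of the quotient $\widehat{\G}\to\widehat{\G}/\n^\perp$), one lifts the convergence to $\Ad^*(x_n)f+h_n\to f+h$ for some $h_n\in\n^\perp$. Second, one shows separately that $f+\n^\perp\subseteq\overline{\Ad^*(A)f}$: for $X\in\a$ the formula $\Ad^*(\exp X)f=f+\Psi_f(\cdot,\dot X)$ holds exactly (since $[\a,\G]\subseteq\z$ kills higher Campbell--Hausdorff terms), and the map $\a/\z\to(\G/\n)^*\cong\n^\perp$ has dense range by nondegeneracy of the bicharacter $\Psi_f$. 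Since $\Ad^*(x)$ acts trivially on $\n^\perp$ for $x\in R\subseteq N$ (because $\Ad(x^{-1})Y-Y\in\n$), one can conjugate the second fact by $x_n$ to get $\Ad^*(x_n)f+\n^\perp\subseteq\overline{\Ad^*(R)f}$, and the two pieces combine. The missing idea in your proposal is precisely this: use $A\subseteq R$ to absorb the $\n^\perp$-defect, rather than hoping the original net handles everything.
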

\begin{proof} Let $\r'$ be any closed subalgebra of $G$ which is subordinate to $f$ such that $\r\subseteq\r'$.
Note first that $\a\subseteq \z(\n)\subseteq \r\subseteq\r'$, so that for $X\in \r'$ we get
$f([X,Y])=0$ for all $Y\in \a$. Since $[X,Y]\subseteq \z$ for all $Y\in \a\subseteq \z_2(\G)$ and since $f$ is faithful on $\z$, we see that $[X,Y]=0$ for all $Y\in \a$. It follows that $\r'\subseteq \n$, and hence 
that $\r'=\r$ since $\r$ is a maximal $f|_\n$-subordinate subalgebra of $\n$. 

For the proof of (1) let $g=\Ad^*(x)(f)-f\in \G^*$ for some $x\in G$ and let $X=\log(x)$.
 Then we get
\begin{equation}\label{eq-perp}
g(Y)=f\big(\Ad(x^{-1})(Y)- Y\big)=f\big(-[X,Y]+\sum_{n=2}^k\frac{1}{n!}\ad(-X)^n(Y)\big).
\end{equation}
This certainly vanishes whenever $X,Y\in \r$, so that $g\in \r^\perp$ for all $x\in R$.
Conversely, assume that $x\in G$ such that $g=\Ad^*(x)(f)-f\in \r^{\perp}$.
Then (\ref{eq-perp}) implies that 
 $f([X,Y])=0$ for all $Y\in \a$ and hence $[X,Y]=0$ for all $Y\in \a$, since $[X,\a]\subseteq\z$ and 
$f$ is faithful on $\z$. Thus $X\in \n$ and $x\in N$.  
Since $g|_\n=\Ad^*(x)f|_\n- f|_\n\in \r^\perp|_\n$, it follows from the assumption that $x\in R$.

For the proof of (2) we first remark that if $(g_n)_{n\in \NN}$ is a sequence in 
$\G^*$ and $g\in \G^*$ such that
$g_n|_\n\to g|_\n$ in $\n^*$, then, after passing to a subsequence if necessary, we 
find $h_n\in \n^{\perp}$ such that $g_n+h_n\to g$ in $\G^*$. 
To see this we identify 
$\G^*$ with $\widehat{\G}$, $\h^*$ with $\widehat{\h}$ and $\h^{\perp}$ with the annihilator of $\h$ in 
$\G^*$ via $f\mapsto\epsilon\circ f$. The claim then follows from the well-known isomorphism $\widehat{\G}/\h^{\perp}\cong \widehat{\h}$ and the  openness of the quotient map $\widehat{\G}\to\widehat{\G}/\h^{\perp}$.

Assume now that $f\in \G^*$ and $h\in \r^\perp$. By the assumption we may approximate 
$(f+h)|_\n$ by a sequence $\Ad^*(x_n)f|_\n$ for some sequence $(x_n)_{n\in \NN}$ in $R$.
By the above remark we may pass to a subsequence, if necessary, to find elements $h_n\in \n^{\perp}$ 
such that $\Ad^*(x_n)f+h_n\to f+h$ in $\G^*$.  So the result will follow, if we can show that 
$\Ad^*(x_n)f+\n^\perp\subseteq \overline{\Ad^*(R)f}$ for all $n\in \NN$. But since 
$\Ad^*(x)$ acts as the identity on $\n^\perp$ for all $x\in R$, we may apply $\Ad^*(x_n^{-1})$ to this equation in order to see that
it suffices to show that $f+\n^{\perp}\subseteq \overline{\Ad^*(R)f}$. Indeed, we are going to show that
$f+\n^{\perp}=\overline{\Ad^*(A)f}$ with $A=\exp(\a)$. Since $A\subseteq R$, this gives the result.

To show that $f+\n^{\perp}=\overline{\Ad^*(A)f}$ we first observe that for $X\in \a$ and $Y\in \G$ we have
$$\big(\Ad^*(\exp(X))f\big)(Y)=f\big(\exp(\ad(-X))(Y)\big)=f(Y+[Y,X])$$
and hence we get $\Ad^*(\exp(X))f=f+\Psi_f(\cdot, \dot{X})$, where 
$\Psi_f: \G/\n\times \a/\z\to \m$ is the bicharacter  of  (\ref{eq-nondegenerate1}).
The result follows then from Remark \ref{remark-nondegenerate}.

Finally, for the proof of  (3) let $f'=f+h$ for some $h\in \r^\perp$.  We already saw above that  $\r$ is a polarizing subalgebra for $f'$ if it is one for $f'|_\r$. If $(f+\r^{\perp})|_\n=\mathcal O^R(f|_\n)$, then
 $\mathcal O^R(f'|_\n)=\mathcal O^R(f|_\n)=(f+\r^{\perp})|_\n=(f'+\r^{\perp})|_\n$. 
 Since $\mathcal O^R(g|_\n)\subseteq \overline{\Ad^*(R)g|_\n}$ for all $g\in \G$, it follows then from (2)
 that $f'\in f+\r^\perp\subseteq \overline{\Ad^*(R)f}$ and $f\in f'+\r^\perp\subseteq \overline{\Ad^*(R)f'}$,
 thus $f'\in \mathcal O^R(f)$.

\end{proof}

\begin{remark}\label{rem-quotient}
In general, if $f\in \G^*$ is arbitrary, we cannot expect $f$ to be faithful on the center $\z(\G)$ of $\G$.
But by Lemma \ref{largest-ideal-in-kernel-of-character} we can find an ideal $\j$ in the kernel of $f$
such that $f$ factors through an element $\tilde{f}\in (\G/\j)^*$ which is faithful 
on $\z(\G/\j)$. Let $J=\exp(\j)$ and let $A$, $N$ and $R$ in $G$ such that $A/J$,  $N/J$ and $R/J$ satisfy 
the assumptions of the above lemma. Then it is an easy exercise to show that all conclusions of the lemma also hold for the groups $R$, $N$ and $G$, since all statements  ``factor'' through $R/J$, $N/J$, and $G/J$.
\end{remark}

Using Lemma \ref{lem-polarization} together with the above remark, we can now give an explicit 
construction of a special kind of polarizations as follows: 

\begin{remark}\label{rem-standard}
Let $(G,\G)$ be an $(\m,\epsilon)$-dualizing $k$-Lie pair and let 
$f\in \G^*$. Then we can give a recursive construction of a polarizing subalgebra for $f$
as follows:  We start by putting $(G_0, \G_0):=(G,\G)$ and $f_0:=f$. Then, if $(G_i,\G_i)$ and 
$f_i\in \G_i^*$ are constructed for some $i\in \NN_0$, we construct $(G_{i+1},\G_{i+1})$ and $f_{i+1}\in \G_{i+1}^*$ 
by the following steps:
\begin{enumerate}
\item Choose an ideal $\j_i$ in $\G_i$ which lies in the kernel of $f_i$ such that $f_i$ factors through a functional  $\tilde f_i\in (\G_i/\j_i)^*$ which is faithful on $\z(\G_i/\j_i)$ (this is possible by Lemma \ref{largest-ideal-in-kernel-of-character}). Then pass to $(\tilde{G}_i,\tilde{\G}_i):=(G_i/J_i,\G_i/\j_i)$ with $J_i=\exp(\j_i)$.
\item Choose a maximal abelian subalgebra $\a_i$ of $\z_2(\tilde\G_i)$ and let $\dot \G_{i+1}$ be 
its centralizer in $\tilde\G_i$; 
\item  Put $\G_{i+1}:=q_i^{-1}(\dot \G_i)$, where $q_i:\G_i\to \tilde\G_i$ is the quotient map, put $G_{i+1}:=\exp(\G_{i+1})$ and  $f_{i+1}:=f_i|_{\G_{i+1}}$.
\end{enumerate}
Then after each recursion step, the quotient group $G_i/J_i$ reduces its nilpotence length  at least by one. Let $m$ be the smallest integer such that $G_m/J_m$ is abelian, in which case we see 
that $\G_m$ is a polarizing subalgebra for $f_m$. Then, in view of Lemma \ref{lem-polarization} and   Remark \ref{rem-quotient} we arrive at a sequence
of closed exponentiable subgroups 
$$G=G_0\supseteq G_1\supseteq G_2\supseteq \cdots \supseteq G_m=R$$
and a corresponding sequence of subalgebras $\G_i=\log(G_i)$ with 
the following properties
\begin{enumerate}
\item $G_{i+1}$ is normal in $G_i$ and $G_i/G_{i+1}$ is abelian for all $0\leq i\leq m-1$;
\item $\r$ is a polarizing subalgebra for $f_i:=f|_{\G_i}$  for all $0\leq i\leq m$;
\item $R=\{x\in G_i: \Ad^*(x)f_i\in f_i+\r^{\perp}|_{\G_i}\}$ and
 $f_i+\r^\perp|_{\G_i}=\mathcal O^R(f_i)$  for each $0\leq i\leq m$.
\end{enumerate}
In particular,
  $\r$ is a polarizing subalgebra for $f\in \G^*$,  $R=\{x\in G: \Ad^*(x)f\in f_i+\r^{\perp}\}$ and 
  and $f+\r^\perp$ coincides with the $\Ad^*(R)$-quasi-orbit $\mathcal O^R_f$ in $\G^*$.
\end{remark}

\begin{definition}\label{def-standard}
A polarizing subalgebra  $\r$ for $f$ which is constructed by the above recursion procedure is called a 
{\em standard polarization of grade $m=:m(f,\r)$}.  Note that we get 
$m(f_i,\r)=m-i$ if $f_i\in \G_i^*$ 
is the $i$-th functional  in the above described recursion!
\end{definition}

The standard polarizing subalgebras  as defined above are well adjusted to the Kirillov map:

\begin{proposition}\label{prop-standard-polarization}
Let $(G,\G)$ be an $(\m,\epsilon)$-dualizable $k$-Lie pair and let $f\in \G^*$. Suppose that $\r\subseteq \G$ is a standard polarizing subalgebra for $f$, let $R=\exp(\r)$ and let $\varphi_f\in \widehat{R}$ be the character 
corresponding to $f$. Then
$\ind_R^G\varphi_f$ is irreducible with central character $\varphi_f|_Z$, with $Z=Z(G)$.

Conversely, if $P\in \Prim(G)$ is any primitive ideal in $C^*(G)$, there exists some $f\in \G^*$ and some standard polarization $\r$ of $f$ such that $P=\ker(\ind_R^G\varphi_f)$.
\end{proposition}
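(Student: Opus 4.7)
My plan is to prove each half inductively, using Lemma \ref{lemma2} for the forward direction and Proposition \ref{A-N-proposition} together with the inductive hypothesis applied in $(N,\n)$ for the converse.

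\textbf{Part 1.} I would argue by induction on the grade $m=m(f,\r)$. The base case $m=0$ is immediate, since then $R=G$ and $\ind_R^G\varphi_f=\varphi_f$ is itself a character. For the inductive step, the first step of the recursion in Remark \ref{rem-standard} quotients by $J_0=\exp(\j_0)$; since $J_0\subseteq R$ and $\varphi_f|_{J_0}=1$, the induced representation descends to $G/J_0$, so I may assume $f$ is faithful on $\z(\G)$. Let $\a\subseteq\z_2(\G)$ be the maximal abelian subalgebra chosen in the recursion and $\n$ its centralizer. By construction $\r\subseteq\n$ and is a standard polarization of $f|_\n$ of grade $m-1$, so the inductive hypothesis gives $\rho:=\ind_R^N\varphi_f\in\widehat{N}$ irreducible with central character restricting to $\varphi_f|_Z$ on $Z=Z(G)$. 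Since $f$ is faithful on $\z$, Remark \ref{remark-nondegenerate} yields nondegeneracy of $\Phi_{\varphi_f|_Z}:G/N\times A/Z\to\TT$, so Lemma \ref{lemma2} applies and gives that $\ind_N^G\rho=\ind_R^G\varphi_f$ (by induction in steps) is irreducible with central character $\varphi_f|_Z$.

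\textbf{Part 2.} I would induct on the nilpotence length $l$ of $G$. The case $l=1$ is immediate via the dualizability isomorphism $\G^*\cong\widehat{G}$ and $\r=\G$. For $l\geq 2$, pick $\pi\in\widehat{G}$ with $\ker\pi=P$ and replace $G$ by $G/J$, where $J$ is the maximal exponentiable normal subgroup contained in $\ker\pi$ (Lemma \ref{lemma-maximal-ideal-inside-kernel-pi-and-f}); any polarization data found in $G/J$ lifts back to $G$, since $\log J$ is then automatically absorbed into $\j_0$ at the first recursion step. After this reduction $\ker\pi$ contains no nontrivial exponentiable subgroup. If $\pi$ is one-dimensional, it is itself a character and we finish as in the base case. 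Otherwise, using Definition \ref{def-dualizable}(c) I pick $f\in\G^*$ extending a representative of the central character $\psi_\pi$ from $\z$; Lemma \ref{lemma-faithful-central-character} ensures $f$ is faithful on $\z$, and Remark \ref{remark-nondegenerate} gives nondegeneracy of $\Phi_{\psi_\pi}$. Proposition \ref{A-N-proposition} then yields $\rho\in\widehat{N}$ with $\rho|_Z\sim\psi_\pi$ and $\ker(\ind_N^G\rho)=\ker\pi$. Since $N$ has strictly smaller nilpotence length, the inductive hypothesis produces $g\in\n^*$ and a standard polarization $\r\subseteq\n$ of $g$ with $\ker(\ind_R^N\varphi_g)=\ker\rho$.

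To close, I extend $g$ to $f'\in\G^*$ via Definition \ref{def-dualizable}(c). Then $\varphi_{f'}|_Z=\varphi_g|_Z=\psi_\pi$ (using $\z\subseteq\n$ and $\rho|_Z=\psi_\pi\cdot 1$), so Lemma \ref{lemma-faithful-central-character} again gives $f'$ faithful on $\z(\G)$. This makes the first recursion step for $f'$ in $\G$ trivial, forces the centralizer step to return $\n$, and identifies the remainder of the recursion with the recursion that produced $\r$ from $g$ in $\n$. Hence $\r$ is a standard polarization of $f'$ in $\G$. Induction in steps identifies $\ind_R^G\varphi_{f'}$ with $\ind_N^G(\ind_R^N\varphi_g)$, and since $\ind_R^N\varphi_g$ and $\rho$ have equal kernels in $\widehat{N}$ (hence are weakly equivalent), Fell's continuity of induction (Theorem \ref{thm-cont-ind-res}) yields $\ker(\ind_R^G\varphi_{f'})=\ker(\ind_N^G\rho)=\ker\pi=P$.

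\textbf{Main obstacle.} The critical delicate point is ensuring that the polarization $\r\subseteq\n$ produced in $(N,\n)$ survives as a standard polarization of the extension $f'$ in $(G,\G)$. This requires the first recursion step for $f'$ to be trivial, which in turn requires $f'$ to be faithful on $\z(\G)$. Securing this faithfulness is precisely why one first reduces $\pi$ to have no exponentiable subgroup in its kernel, so that Lemma \ref{lemma-faithful-central-character} becomes applicable once $\varphi_{f'}|_Z=\psi_\pi$ is arranged via Definition \ref{def-dualizable}(c).
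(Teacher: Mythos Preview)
Your proof is correct and follows essentially the same route as the paper's own argument: induction on the grade $m(f,\r)$ for the first part (reducing to Lemma \ref{lemma2} after quotienting by $J_0$), and induction on the nilpotence length for the converse (reducing via Lemma \ref{lemma-maximal-ideal-inside-kernel-pi-and-f}, applying Proposition \ref{A-N-proposition}, then pulling the polarization back from $(N,\n)$). The only small imprecision is your phrase ``forces the centralizer step to return $\n$'': the recursion in Remark \ref{rem-standard} involves a \emph{choice} of maximal abelian subalgebra at each step, so the centralizer is not forced to be $\n$ --- rather, you may \emph{choose} the same $\a$ as before, which then yields $\n$ and allows the remainder of the recursion to be taken identical to the one producing $\r$ from $g$ in $\n$. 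With that adjustment your argument is complete.
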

\begin{proof} 
We perform induction on the degree $m=m(f, \r)$. If $m=0$, we have $\r=\G$ and nothing is to prove.
If $m>0$, then let $(G_1,\G_1)$ be as in the recursion procedure for the construction of $\r$ and let 
$f_1=f|_{\G_1}$. Then $\r$ is a standard polarizing subalgebra for $f_1$ with $m(f_1,\r)=m-1$, 
and it follows from induction that 
$\rho:=\ind_R^{G_1}\varphi_f$ is irreducible.

To see that $\ind_{G_1}^G\rho$ is irreducible as well let $\j=\j_0\subseteq \ker f$ denote the ideal in step (1) of 
Remark \ref{rem-standard} and let $J=\exp(\j)$.
Then passing to $G/J$ and $G_1/J$ if necessary, we may assume without loss of generality that 
$f$ is faithful on $\z=\z(\G)$ and $G_1=N$ is the centralizer of some maximal abelian subgroup $A$ of 
$\z_2(\G)$. Then Remark \ref{remark-nondegenerate} implies that the bicharacter
$\Phi_f:G/N\times A/Z\to\TT; \Phi_f(\dot x,\dot y)=\varphi_f\big((x;y)\big)$ is nondegenerate. By induction, the central character of $\rho=\ind_R^{G_1}\varphi_f$ 
equals $\varphi_f|_{Z(G_1)}$, and it follows then from Lemma \ref{lemma2} that 
$\ind_R^G\varphi_f=\ind_{G_1}^G\rho$ is irreducible with central character $\varphi_f|_Z$.

We prove the second assertion by induction on the nilpotence length $l$ of $G$. If $l=1$, then $G$ is abelian, and nothing is to prove. So assume $l>1$ and 
that $P\in \Prim(G)$ is given.  Choose $\pi\in \widehat{G}$ with $P=\ker\pi$.
Let $J\subseteq G$ be a maximal exponentiable normal subgroup of $G$ which lies in the kernel of $\pi$.
If $J$ is nontrivial, we pass to $(G/J,\G/\j)$ with $\j=\log(J)$. So assume $J$ is trivial. Let $\psi\in \widehat{Z}$ 
be the central character of $\pi$ and let $g\in \z^*$ such that $\psi=\varphi_g$ on $Z$. 
It follows then from Lemma \ref{lemma-faithful-central-character} that  $g$ is faithful on $\z^*$.
Then Remark \ref{remark-nondegenerate} implies that the bicharacter $\Phi_\psi:G/N\times A/Z\to\TT$ 
is nondegenerate and  Proposition \ref{A-N-proposition} implies that there exists 
some $\rho\in \widehat{N}$ with $\rho|_Z\sim \psi$, $\ind_N^G\rho$ is irreducible, and 
$\ker\pi=\ker(\ind_N^G\rho)$. 
Since $N$ has nilpotence length smaller than $l$, we can assume by induction that there exists a 
functional $f_1\in \n^*$ and a standard polarization $\r$ for $f_1$ such that $\ker\rho=\ind_R^N\varphi_{f_1}$.
Choose any $f\in \G^*$ which restricts to $f_1$ on $N$. It follows then from the first part of this 
proposition that  $\psi\sim \rho|_Z\sim \varphi_{f_1}|_Z=\varphi_f|_Z=\varphi_g$, and hence that $f|_\z=g$ is faithful on $\z$. But it follow then from our constructions and Lemma \ref{lem-polarization} that $\r$ is a standard polarization for $f$ and that $\ind_R^G\varphi_f=\ind_N^G(\ind_R^N\varphi_{f_1})$ is irreducible with $\ker(\ind_R^G\varphi_f)=\ker\pi=P$.
\end{proof}

The above proposition suggests, that for every $(\m,\epsilon)$-dualizable $k$-Lie pair $(G,\G)$
 there is a  surjective map 
\begin{equation}\label{Kirillov-map}
\kappa:\G^*\to\Prim(G);\; f\mapsto\ker(\ind_R^G\varphi_f)
\end{equation}
where $R=\exp(\r)$ for some standard polarizing subalgebra $\r$ for $f$. But in order to have this map well defined, we need to show that the $C^*$-kernel of the induced representation $\ind_R^G\varphi_f$ does not depend on the particular choice of the standard polarizing algebra for $f$. 
We first do the case  of two-step nilpotent groups:

\begin{lemma}\label{lem-two-step}
Suppose that $(G,\G)$ is an $(\m,\epsilon)$-dualizable $k$-Lie pair such that $G$ is two-step nilpotent and let 
$f\in \G^*$ such that $f$ does not vanish on the center $Z=Z(G)$. Then, if  $\mathfrak r$ is a polarizing subalgebra for $f$ in $\G$, then $R=\exp(\mathfrak r)$ is a polarizing subgroup for $\psi:=\varphi_f|_Z$ in $G$ (see the discussion preceeding Proposition \ref{prop-two-step}).

Moreover, if $\r$ and $\r'$ are any two polarizing subalgebras for $f$ with corresponding subgroups 
 $R=\exp(\r)$ and $R'=\exp(\r')$ of $G$, then 
 $\ind_R^G\varphi_f\sim\ind_{R'}^G\varphi_f$.
\end{lemma}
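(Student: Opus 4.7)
My plan is two-fold: first I will verify that $R=\exp(\r)$ is a polarizing subgroup for $\psi$ via a direct Campbell-Hausdorff computation combined with a closure/maximality argument that exploits condition (a) of Definition \ref{def-dualizable}, and then I will deduce the weak-equivalence statement as an immediate consequence of Corollary \ref{cor-polarizing} applied to the two polarizing subgroups $R$ and $R'$.

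For the first assertion, I would first note that $\z:=\z(\G)$ sits inside every polarizing subalgebra, since otherwise the closure $\overline{\r+\z}$ would be a strictly larger closed $f$-subordinate subalgebra (brackets involving $\z$ vanish). Because $G$ is two-step nilpotent, all commutators lie in $\z$ and the Campbell-Hausdorff formula collapses to $(\exp X;\exp Y)=\exp([X,Y])$. For $x=\exp X$, $y=\exp Y\in R$ this yields $\psi((x;y))=\varphi_f(\exp[X,Y])=\epsilon(f([X,Y]))=1$ since $f([\r,\r])=0$, so $R$ is subordinate to $\psi$. For the maximality step I would suppose $H$ is a closed subgroup subordinate to $\psi$ with $R\subsetneq H$, pick $x=\exp X\in H\setminus R$, and rewrite the subordination condition as $\epsilon(f([X,Y]))=1$ for every $Y\in\r$. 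The crucial observation is that the image of the $\Lambda_k$-linear map $Y\mapsto f([X,Y])$ is a $\Lambda_k$-submodule of $\m$ lying inside $\ker\epsilon$; by condition (a) of Definition \ref{def-dualizable} it must be $\{0\}$. Two-step nilpotence then gives $[X,\r]\subseteq\z\subseteq\r$, so $\overline{\r+\Lambda_k X}$ is a closed $\Lambda_k$-subalgebra that is still $f$-subordinate (its brackets all involve either a bracket within $\r$ or a bracket with $X$, which we just showed vanishes under $f$) and strictly contains $\r$, contradicting the maximality of $\r$. Hence $H=R$.

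For the second assertion, by the first part both $R$ and $R'$ are polarizing subgroups for $\psi$. The discussion preceding Proposition \ref{prop-two-step} identifies $Z_\psi/K_\psi$ as the centre of $G/K_\psi$ while $R/K_\psi$ and $R'/K_\psi$ are maximal abelian there, so automatically $Z_\psi\subseteq R\cap R'$. The two characters $\varphi_f|_R$ and $\varphi_f|_{R'}$ agree on $Z_\psi$ (both restrict to $\exp X\mapsto\epsilon(f(X))$), and Corollary \ref{cor-polarizing} then delivers $\ind_R^G\varphi_f\sim\ind_{R'}^G\varphi_f$. The principal obstacle lies entirely in the maximality step of the first part: one must upgrade the pointwise equality $\epsilon(f([X,Y]))=1$ to the genuine vanishing $f([X,Y])=0$ in $\m$, which is precisely where hypothesis (a) of the dualizability definition is brought to bear. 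Once this is secured the remaining bookkeeping (closure under the bracket via two-step nilpotence, passage to Corollary \ref{cor-polarizing}) is routine.
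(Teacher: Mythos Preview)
Your proposal is correct and follows essentially the same line as the paper's proof: both reduce to showing that any $X\notin\r$ with $\epsilon(f([X,Y]))=1$ for all $Y\in\r$ must already satisfy $f([X,\r])=0$, contradicting maximality of $\r$, and then invoke Corollary~\ref{cor-polarizing} for the weak-equivalence statement. The only cosmetic difference is that the paper packages the maximality step via the injectivity criterion of Lemma~\ref{lem-injective-pairing} and justifies the vanishing by citing Lemma~\ref{lemma-subalgebras-give-lie-pairs} (injectivity of $\r^*\to\widehat{\r}$), whereas you argue directly from the definition of polarizing subgroup and from condition~(a) of Definition~\ref{def-dualizable}.
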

\begin{proof} It is clear that if $\mathfrak r$ is a polarizing subalgebra for $f$, then $R=\exp(\mathfrak r)$ 
is subordinate to $\psi$. To see that $R$ is a polarizing subgroup for $\psi$, it suffices to show that 
the homomorphism $\Phi_\psi:G/R\to\widehat{R}$ given by $\Phi_\psi(\dot x)(y)=\psi\big((x;y)\big)$ is injective.
Suppose it's not. Let $x=\exp(X)\in \G\setminus \mathfrak r$ such that $\Phi_\psi(\dot{x})\equiv 1$.
By the Campbell-Hausdorff formula this implies that $\epsilon\circ f([X, Y])=1$ for all $Y\in \mathfrak r$, hence
 $f([X,Y])=0$ for all $Y\in \mathfrak r$ by Lemma \ref{lemma-subalgebras-give-lie-pairs} .
But this contradicts the fact that $\mathfrak r$ is a maximal subordinate algebra for $f$. Te second assertion follows now from 
Corollary \ref{cor-polarizing}.
\end{proof}

The above lemma is used in the proof of

\begin{proposition}\label{prop-polarization-independence}
Suppose that $(G,\G)$ is an $(\m,\epsilon)$-dualizable $k$-Lie pair. Let $f\in \G^*$, let $\r$ be a standard polarization for $f$ and let $\s\subseteq \G$ be any closed 
subalgebra of $\G$ which is subordinate to $f$. Then $\ind_R^G\varphi_f\prec\ind_S^G\varphi_f$
where $R=\exp(\r)$ and  $S=\exp(\s)$.

In particular, if  $\r$ and $\s$ are both standard polarizations for $f$, then $\ind_R^G\varphi_f\sim\ind_S^G\varphi_f$ and the Kirillov map (\ref{Kirillov-map}) is well defined.
\end{proposition}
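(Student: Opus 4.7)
My plan is to begin by reducing the proposition to the case where $\s$ is a polarization and $f$ is faithful on $\z := \z(\G)$. If $\s$ is merely subordinate, I extend it to a polarization $\s_0 \supseteq \s$ by Zorn's lemma; applying Remark \ref{rem-group-weak-containment}(c) to the amenable group $S_0$ and then Theorem \ref{thm-cont-ind-res} yields $\ind_{S_0}^G\varphi_f \prec \ind_S^G\varphi_f$, so it suffices to consider polarizing $\s$. Letting $\j$ be the largest ideal in $\ker f$ (Lemma \ref{largest-ideal-in-kernel-of-character}), we have $\j \subseteq \r$ by the construction of standard polarizations (Remark \ref{rem-standard}), while $\overline{\s + \j}$ is a subordinate subalgebra containing $\s$, so $\j \subseteq \s$ by maximality. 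Replacing $(G, \G, f)$ by $(G/J, \G/\j, \tilde f)$ via Remark \ref{rem-properties-of-induction}(c), I may thus assume $f$ is faithful on $\z$.

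\textbf{Induction on nilpotence length $l$.} The case $l = 1$ is trivial. For $l \geq 2$, faithfulness of $f$ on $\z$ forces the maximal abelian subgroup of $Z_2(G)$ chosen in the standard construction of $\r$ to equal $A = Z_2(G)$ (since $Z_2/Z$ is already abelian), so that $N := C_G(Z_2)$ is normal in $G$, contains $Z_2$ in its center, and thus has nilpotence length $\leq l-1$. By Remark \ref{rem-standard}, $\r \subseteq \n$ is a standard polarization for $f|_\n$ in the pair $(N, \n)$, while $\s \cap \n$ is subordinate to $f|_\n$. The inductive hypothesis yields $\rho := \ind_R^N\varphi_f \prec \ind_{S \cap N}^N\varphi_f$, and inducing to $G$ via Remark \ref{rem-properties-of-induction}(a) and Theorem \ref{thm-cont-ind-res} gives
\begin{equation*}
\pi := \ind_R^G\varphi_f \;=\; \ind_N^G \rho \;\prec\; \ind_N^G\bigl(\ind_{S \cap N}^N \varphi_f\bigr) \;=\; \ind_{S \cap N}^G\varphi_f.
\end{equation*}

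\textbf{Upgrading to $\ind_S^G\varphi_f$: the main obstacle.} The crucial remaining step is $\ind_{S \cap N}^G\varphi_f \sim \ind_S^G\varphi_f$. Since $N$ is normal in $G$ and $S/(S \cap N) \hookrightarrow G/N$ is abelian (Remark \ref{remark-nondegenerate}), using induction in stages, amenability of $S/(S \cap N)$ to replace $\lambda_{S/(S\cap N)}$ by its spectrum $\widehat{S/(S\cap N)}$, Pontryagin duality to extend each character of $S/(S\cap N)$ to $\widehat{G/N}$, and Remark \ref{rem-properties-of-induction}(b) to move characters of $G$ across the induction, I obtain
\begin{equation*}
\ind_{S \cap N}^G\varphi_f \;\sim\; \bigl\{ \chi \otimes \ind_S^G\varphi_f : \chi \in \widehat{G/N} \bigr\}.
\end{equation*}
For each $y \in A = Z_2(G)$ and $x \in G$, the commutator $(x;y)$ lies in $Z \subseteq S$; exploiting the central character $\varphi_f|_Z$ of $\pi_S := \ind_S^G\varphi_f$, a direct computation using $\pi_S(yxy^{-1}) = \pi_S(x\cdot(x^{-1}yxy^{-1}))$ gives $\pi_S(y)\pi_S(x)\pi_S(y)^{-1} = \chi_y(x)\pi_S(x)$ for the character $\chi_y(x) := \varphi_f((x;y))^{\pm 1}$, so that $\chi_y \otimes \pi_S \cong \pi_S$. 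The family $\{\chi_y : y \in A\}$ is dense in $\widehat{G/N}$ by Remark \ref{remark-nondegenerate}, and Fell's continuity of tensor products (Remark \ref{rem-group-weak-containment}(a)) propagates this weak equivalence to all $\chi \in \widehat{G/N}$, completing the proof. The main obstacle is precisely this upgrading step; in contrast to classical Kirillov theory one cannot compare two polarizations by a dimension count, and the tensor-product/density argument via the nondegenerate pairing $\Phi_f$ is what plays that role. The final weak equivalence of $\ind_R^G\varphi_f$ and $\ind_S^G\varphi_f$ for two standard polarizations then follows by symmetry.
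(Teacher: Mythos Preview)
Your overall strategy---reduce to $\s$ polarizing and $f$ faithful on $\z$, then use the inductive hypothesis on $\s\cap\n$ and upgrade via the tensor-product/density trick from Lemma~\ref{lemma1}---is sound and is genuinely different from the paper's argument. The paper instead splits into cases according to whether $\s\subseteq\n$; when $\s\not\subseteq\n$ it builds the auxiliary subalgebra $\h=\overline{\s+\a}$ and $\u=\h\cap\n$, shows $\u$ is polarizing for $f|_\h$, and reduces to the two-step situation inside a quotient of $H$ (Lemma~\ref{lem-two-step}). Your route avoids this case analysis and the somewhat delicate verification that $\u$ is polarizing in $\h$; in exchange it relies on the absorption $\chi\otimes\pi_S\sim\pi_S$ for $\chi\in\widehat{G/N}$, which is precisely the mechanism of Lemma~\ref{lemma1} and is arguably more transparent.

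That said, two points need repair. First, your claim that ``faithfulness of $f$ on $\z$ forces $A=Z_2(G)$'' is false: $Z_2/Z$ is abelian, but $Z_2$ itself is in general only two-step nilpotent (already in the Heisenberg group $Z_2=G$). With your choice $N=C_G(Z_2)$ one need not have $R\subseteq N$, so the appeal to the inductive hypothesis on $(N,\n)$ collapses. The fix is simply to let $\a$ be the maximal abelian subalgebra of $\z_2(\G)$ appearing in the standard construction of $\r$ (Remark~\ref{rem-standard}) and $\n$ its centralizer; then $\a\subseteq\r\subseteq\n$ by construction, and every step of your argument goes through verbatim with this $A$ in place of $Z_2$.

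Second, the step ``Pontryagin duality to extend each character of $S/(S\cap N)$ to $\widehat{G/N}$'' is not valid as stated: the continuous injection $S/(S\cap N)\hookrightarrow G/N$ need not be a closed embedding (this is exactly what the paper's regularity hypothesis would guarantee, but the proposition does not assume it), so restriction $\widehat{G/N}\to\widehat{S/(S\cap N)}$ need not be surjective. However, since $S/(S\cap N)\to G/N$ is injective, the dual map has \emph{dense} image, and density is all you need: $\ind_{S\cap N}^S\varphi_f\sim\{\mu\cdot\varphi_f:\mu\in\widehat{S/(S\cap N)}\}$ is then weakly equivalent to the dense subset $\{(\chi|_S)\cdot\varphi_f:\chi\in\widehat{G/N}\}$, and inducing gives $\ind_{S\cap N}^G\varphi_f\sim\{\chi\otimes\pi_S:\chi\in\widehat{G/N}\}\sim\pi_S$ by your absorption computation. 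With these two corrections your proof is complete.
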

\begin{proof}
We give a proof by induction on the nipotence length $l$ of $G$. If $l=1$ the result is trivial, and the case $l=2$ follows from the above lemma. So assume now that $l\geq 3$.
By an application of Zorn's lemma, we can first choose some polarization $\s'$ for $f$ which contains $\s$.
Let $S'=\exp(\s')$. Since $S'$ is amenable, it follows from Remark \ref{rem-group-weak-containment} that 
$\varphi_f\prec\ind_S^{S'}\varphi_f$ as representations of $S'$, and since induction preserves weak containment, we see that $\ind_{S'}^G\varphi_f\prec \ind_{S'}^G(\ind_S^{S'}\varphi_f)$.
Thus if we can check that $\ind_R^G\varphi_f\prec 
\ind_{S'}^G\varphi_f$, the result will follow. So from now on we may assume as well that $\s$ is a polarizing subalgebra for $f$. 

Let $\j$ denote the largest ideal of $\G$ which lies in $\ker f$. 
Since $\s+\j$ is an $f$-subordinate subalgebra of $\G$, we have $\j\subseteq \s$ by maximality of $\s$ and by the same reason we have $\j\subseteq\r$. Thus, we may pass to $(G/J, \G/\j)$, $J=\exp(\j)$, to assume that $f$ is faithful on $\z=\z(\G)$. 
Since $\r$ is a standard polarization, we find a maximal abelian subalgebra $\a$ of $\z_2(\G)$ such that 
$\a$ is contained in $\r$, and then $\r$ is also a standard polarization for $f_1= f|_{\n}$ in $\n$, where 
$\n$ denote the centralizer of $\a$ in $\G$.  Then $N=\exp(\n)$ has nilpotence length smaller than $l$ and
if $\s\subseteq \n$,  it follows from induction that $\ind_R^N\varphi_f\prec\ind_S^N\varphi_f$, which then 
implies $\ind_R^G\varphi_f\prec\ind_S^G\varphi_f$ by induction in steps.

Sa assume now that $\s$ does not lie completely in $\n$. Since $\a$ is an ideal in $\G$, we see that
$\h:=\overline{\s+\a}$ is a closed subalgebra of $\G$. Let 
$\u:=\h\cap \n$ ($=\overline{(\s\cap \n)+\a}$).
Then $\u$ is subordinate to $f$ since clearly $\s\cap \n$ and $\a$ are subordinate to $f$ and since 
$[\s\cap \n,\a]\subseteq[\n,\a]=\{0\}$. Since $\u\subseteq \n$, we have $\ind_R^G\varphi_f\prec \ind_U^G\varphi_f$, for $U=\exp(\u)$ by induction.

In what follows next, we want to argue that $\ind_U^G\varphi_f\sim \ind_S^G\varphi_f$. 
If $H:=\exp(\h)$, then this follow from inducing representation in steps if we can show that
$\ind_U^H\varphi_f\sim \ind_S^H\varphi_f$. We want to do this by showing that after passing to a suitable 
quotient group $H/K$, the problem reduces to the two-step nilpotent case.

For this let $\k:=\ker f|_{\s\cap\n}$.
Then $\k$ is an ideal in $\h$, since $\n\cap \s$ is clearly an ideal of $\h=\overline{\s+\a}$, and 
for all $X\in \s\cap\n$, $Y\in \s$ and $Z\in \a$ we have $f([X, Y+Z])=f([X,Y])+f([X,Z])=0$
since $\s$ is subordinate to $f$ and $\n$ centralizes $\a$. It thus follows that 
$[\k,\h]=[\k,\overline{\s+\a}]\subseteq \k$.
Let $K=\exp(\k)$. Then $K\subseteq U, S$ and $\varphi_f$ vanishes on $K$.
By passing to $(H/K,\h/\k)$ if necessary, we may 
 therefore assume without loss of generality that $f$ is faithful on $\s\cap \n$. 
 We already checked above that $f([\s\cap \n, \h])=\{0\}$, which by faithfulness of $f$ on $\s\cap \n$ implies that 
$[\s\cap \n, \h]=\{0\}$, so $\s\cap \n$ lies in the center of $\h$. 
On the other hand, we have $[\h,\h]\subseteq\s\cap \n$. To see this it suffices to show 
that $[\s+\a,\s+\a]= [\s,\s]+[\s,\a]$ lies in $\s\cap \n$. But $[\s,\s]\subseteq \s\cap\n$ since $\G/\n$ is abelian
and $[\s,\a]\subseteq \z(\G)\subseteq\s\cap \n$, since $\a\subseteq \z_2(\G)$.
It is clear that $\s$ is a  polarizing subalgebra for $f$ in $\h$.
So the result will follow from Lemma \ref{lem-two-step} if we can show that 
$\u$ is also a polarizing subalgebra for $f$ in $\h$. 
Suppose that this  is not the case. Then there exists $X\in \h\setminus \n$ 
such that $f([X,\u])=\{0\}$, which in particular implies that $f([X,\a])=\{0\}$ and then $[X,\a]=\{0\}$, since 
$[X,\a]\subseteq\z(\G)$ and $f$ is faithful on $\z(\G)$. But this implies $X\in \n$, a contradiction.
\end{proof}

\begin{remark}\label{rem-polarization-independent}
If $\s\subseteq \G$ is an arbitrary polarization for $f\in \G^*$ and $\r$ is a standard polarization, then 
the above theorem does not imply that $\ind_R^G\varphi_f\sim\ind_S^G\varphi_f$, it only gives the 
weak containment $\ind_R^G\varphi_f\prec\ind_S^G\varphi_f$. If we could show that the 
algebra $\u=\overline{(\s\cap \n)+\a}$ is actually a polarizing subalgebra for $f$ in $\G$ (and not just in $\h$),
the stronger weak equivalence result would follow immediately from the same kind of induction argument.
\end{remark}


%

\section{(Bi)-Continuity of the Kirillov
map}\label{section-the-kirillov-homeomorphism}

In this section we want to show that the Kirillov map 
$\kappa:\G^*\to \Prim(G)$ of  (\ref{Kirillov-map})
is continuous and factors through a map 
\begin{equation}\label{Kirillow-quasi}
\kappa: \G^*/\sim\to  \Prim(G)
\end{equation}
where $\G^*/\sim$ denotes the quasi-orbit space for the coadjoint action of $G$ on $\G^*$. 
Recall that two elements $f,f'\in \G^*$ are in the same $\Ad^*(G)$-quasi orbit, if either is in the closure
of the $\Ad^*(G)$-orbit of the other.

In order to prove continuity, we need to recall Fell's topology on the 
subgroup-representation space $\mathcal S(G)=\{(H,\rho): \text{$H$ a closed subgroup of $G$ and $\rho\in \Rep(H)$}\}$ of a locally compact group $G$. 

In the following, let $G$ be a locally compact group and let
$\mathcal{K}(G)$ denote the set of all closed subgroups of $G$
equipped with the compact-open topology as introduced in \cite{Fell}.
For later use, let us recall the following description of convergence  in $\mathcal K(G)$:

\begin{lemma}[{\cite{Fell}}]\label{lem-convergence}
Assume that the  net $(K_i)_{i\in I}$ 
converges to $K\in \mathcal K(G)$ and let $x\in G$.
Then $x\in K$ if and only if there 
exists a subnet $(K_{j})_{j}$ of $(K_i)_{i\in I}$ and elements $x_j\in K_j$
for all $j\in J$ such that $x_j\to x$ in $G$. 
\end{lemma}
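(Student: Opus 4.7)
The plan is to prove both implications using the explicit sub-basic open sets for the Fell (Chabauty) topology on $\mathcal K(G)$: sets of the form $\{H\in\mathcal K(G): H\cap U\neq\emptyset\}$ for $U\subseteq G$ open, and $\{H\in\mathcal K(G): H\cap C=\emptyset\}$ for $C\subseteq G$ compact. Both implications reduce directly to convergence properties of the net $(K_i)$ with respect to these sub-basic sets, so no substantive geometric input is needed beyond the definition of the topology and the existence of a neighborhood base of $x$ in $G$.

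For the forward direction, I would fix $x\in K$ and take a countable (or, in general, directed) neighborhood base $\mathcal U$ of $x$ in $G$. For each $U\in\mathcal U$ the set $\{H\in\mathcal K(G): H\cap U\neq\emptyset\}$ is an open neighborhood of $K$ (because $x\in K\cap U$), hence by $K_i\to K$ there exists $i_U\in I$ with $K_i\cap U\neq\emptyset$ for all $i\geq i_U$. Form the directed set $J:=\{(i,U)\in I\times\mathcal U: i\geq i_U\}$ (ordered componentwise, with $\mathcal U$ ordered by reverse inclusion), define $\phi(i,U):=i$, and pick $x_{(i,U)}\in K_i\cap U$. One checks that $\phi$ is cofinal in $I$, so $(K_{\phi(i,U)})$ is a subnet of $(K_i)$, and by construction $x_{(i,U)}\in U$, so $x_{(i,U)}\to x$.

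For the reverse direction, I argue by contraposition. If $x\notin K$, local compactness of $G$ together with closedness of $K$ provides an open neighborhood $V$ of $x$ with compact closure $\overline V$ satisfying $\overline V\cap K=\emptyset$. Then $\{H\in\mathcal K(G): H\cap\overline V=\emptyset\}$ is an open neighborhood of $K$ in the Fell topology, so there exists $i_0\in I$ such that $K_i\cap\overline V=\emptyset$, and in particular $K_i\cap V=\emptyset$, for all $i\geq i_0$. This ``eventually'' property is inherited by every subnet $(K_{\phi(j)})$, so for any choice $x_j\in K_{\phi(j)}$ one has $x_j\notin V$ eventually, contradicting $x_j\to x$.

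The only mildly delicate point is a clean formalization of the subnet in the forward direction (verifying cofinality of the projection $J\to I$), but this is a routine manipulation of directed sets; no real obstacle arises, which is why this lemma is stated as a folklore fact and attributed to Fell.
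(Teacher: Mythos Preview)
Your argument is correct and is the standard unpacking of the Fell--Chabauty topology via its sub-basic open sets; the forward-direction subnet construction and the reverse-direction contrapositive using a compact neighborhood both go through as you describe. Note that the paper does not supply its own proof of this lemma at all---it simply cites Fell's original paper \cite{Fell}---so there is nothing to compare against beyond observing that your write-up is exactly the kind of routine verification one would expect behind such a citation.
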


If $G$ is second countable, the same is true for $\mathcal K(G)$, and we can replace nets by sequences 
in the above result.
A smooth choice of Haar measures in
$\mathcal{K}(G)$ is a mapping $K \mapsto \mu_K$ assigning to each
$K$ in $\mathcal{K}(G)$ a left Haar measure $\mu_K$ on $K$
such that $K\mapsto \int_K f(x) d\mu_Kx$ is continuous for all $f\in C_c(G)$ -- it is
shown in  \cite{Fel} that they always exist.
Let $Y$ be the set of all pairs
$(K,x)$, where $K \in \mathcal{K}(G)$ and $x \in K$. Then $Y$ is a closed subset of $\mathcal{K}(G) \times G$, hence locally compact in the relative topology. Let
$\{\mu_K \}$ be a fixed smooth choice of Haar measures on
$\mathcal{K}(G)$. 
%

Let $\Delta_K$ be the modular function for the closed subgroup
$K$ of $G$. Then $(K,x) \mapsto \Delta_K(x)$ is a continuous
function on $Y$. We make $C_c(Y)$ into a normed $*$-algebra with
the following definitions of convolution, involution and norm given by
\begin{equation*}
\begin{array}{rcl}
(f*g)(K,x) & = & \int_K f(K,y)\; g(K,y^{-1}x) \;d \mu_K (y),  \medskip \\
f^*(K,x)   & = & \overline{f(K,x^{-1})} \Delta_K (x^{-1}), \; \hbox{and} \medskip \\
\| f \|    & = & \sup_{K \in \mathcal{K}(G)} \int_K |f(K,x)| \; d
\mu_K (x).
\end{array}
\end{equation*}
Each element of $C_c(Y)$ can be thought of as a function on
$\mathcal{K}(G)$, whose value at $K$ is in the group algebra of
$K$. The operations are pointwise. The completion $A_s(G)=\overline{C_c(Y)}$ in the 
above defined norm is a Banach $*$-algebra and its enveloping $C^*$-algebra $C_s^*(G)$  is called 
the {\em subgroup algebra} of $G$. For each $K\in \mathcal K(G)$ we obtain a 
canonical $*$-homomorphism $\Phi_K: C_s^*(G)\to C^*(K)$ given on the level of $C_c(Y)$
by $f\mapsto f(K,\cdot)\in C_c(K)$. Then 
each unitary representation $\pi$ of $K$ 
can be lifted to the  $*$-representation $\pi\circ \Phi_K$ of $C^*_s(G)$.
 In this way we may
regard  $\mathcal S(G)$ 
as a subset of $\Rep(C_s^*(G))$. 

The {\em Fell  topology} on $\mathcal{S}(G)$  is the 
restriction of the Fell topology on $\Rep(C_s^*(G))$ to $\mathcal S(G)$ via this identification.

\begin{remark}\label{rem-SG-properties}
In the following we list some important properties of the Fell topology on $\mathcal S(G)$:
\\
{\bf (a)} 
The topology of $\mathcal{S}(G)$ is independent of the particular
smooth choice of Haar measures $\{ \mu_K \}$ (\cite[ Lemma 2.3]{Fel}).
\\
{\bf (b)}
The projection $\mathcal S(G)\to \mathcal K(G); (K, \pi) \mapsto K$  is continuous ({\cite[Lemma 2.5]{Fel}}).
\\
{\bf (c)} 
For each $K$ in $\mathcal{K}(G)$, the mapping $\pi \mapsto (K,
\pi)$ is a homeomorphism from $\Rep(K)$ onto its image in  $\mathcal{S}(G)$ ({\cite[Lemma 2.6]{Fel}}).
\\
{\bf (d)}
Every irreducible $*$-representation of $C_s^*(G)$ is of the form $\pi\circ \Phi_K$
for some unique $(K, \pi)$ in $\mathcal{S}(G)$, $\pi
\in \hat{K}$ ({\cite[Lemma 2.8]{Fel}}).
\end{remark}

In \cite[\S 3]{Fel}, the Fell topology on $\mathcal{S}(G)$ is
described in terms of functions of positive type on subgroups. As
a consequence, one obtains the continuity of restricting  and inducing representations from and to
varying subgroups.

\begin{theorem}[{\cite[Theorem 3.2]{Fel} and \cite[Theorem 4.2]{Fel}}]\label{continuity-of-induction-restriction}
Let
\[\mathcal{W}:=\{(H,K,\pi) \mid (K ,\pi) \in \mathcal{S}(G),\; H
\in \mathcal{K}(G),\; H \subseteq K\}\subseteq \mathcal{K}(G) \times \mathcal{S}(G).
\]
Then the map
$(H,K, \pi) \mapsto (H, \pi|_H)$ from $\mathcal{W}$ to
$\mathcal{S}(G)$ is continuous.
 Similarly, if
\[
\mathcal{W}:= \{(H,K, \pi) \mid (K, \pi) \in \mathcal{S}(G),\; H
\in \mathcal{K}(G),\; H \supseteq K\}\subseteq \mathcal{K}(G) \times \mathcal{S}(G)
\]
Then
$(H,K, \pi) \mapsto (H, \ind_K^H \pi)$ from $\mathcal{W}$ to
$\mathcal{S}(G)$ is continuous.
\end{theorem}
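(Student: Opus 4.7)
The statement is Fell's theorem on continuity of induction and restriction, and the proof I would propose rests entirely on the characterization of the Fell topology on $\Rep(C^*_s(G))$ via functions of positive type. Recall that, for any $C^*$-algebra $A$, a net $\pi_i\to\pi$ in $\Rep(A)$ iff for every vector state $\varphi(a)=\langle\pi(a)\xi,\xi\rangle$, every finite set $F\subseteq A$, and every $\varepsilon>0$, one can choose (for $i$ large) vector states $\varphi_i$ of $\pi_i$ with $|\varphi_i(a)-\varphi(a)|<\varepsilon$ for $a\in F$. Translated to $\mathcal{S}(G)\subseteq\Rep(C^*_s(G))$ via the Banach-$*$-algebra $A_s(G)$, and using that elements of $C_c(Y)$ act on $(K,\pi)$ as $\pi\bigl(f(K,\cdot)\bigr)$ with $f(K,\cdot)\in C_c(K)$, convergence $(K_i,\pi_i)\to(K,\pi)$ amounts to the following: for every positive-type function $\varphi$ of $\pi$ on $K$ and every $f\in C_c(Y)$, one can find positive-type functions $\varphi_i$ of $\pi_i$ on $K_i$ with $\int_{K_i}\varphi_i(x)f(K_i,x)\,d\mu_{K_i}(x)\to \int_K\varphi(x)f(K,x)\,d\mu_K(x)$. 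The smoothness of $K\mapsto\mu_K$ is essential at this point, and this is where the hypothesis that we have a smooth choice of Haar measures really enters.

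For the restriction statement, the plan is as follows. Suppose $(H_j,K_j,\pi_j)\to(H,K,\pi)$ in $\mathcal{W}$. The positive-type function $\varphi|_H$ of $\pi|_H$ on $H$ is just the restriction of $\varphi$. Given $f\in C_c(Y)$ supported near $H$, extend it to $\tilde f\in C_c(Y)$ supported near $K$ by multiplying with a cut-off chosen so that over each $K_j$ it is supported on a small neighbourhood of $H_j$ inside $K_j$. Using Lemma \ref{lem-convergence} (every point of $H$ is a limit of points from $H_j$) together with continuity of $K\mapsto\mu_K$, one checks that the difference between integrating $\tilde f$ against $\varphi_j$ on $K_j$ and integrating $f$ against $\varphi_j|_{H_j}$ on $H_j$ goes to $0$. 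Thus the approximation of $\varphi$ by the $\varphi_j$ transfers to an approximation of $\varphi|_H$ by $\varphi_j|_{H_j}$, proving the continuity of restriction. The only technical subtlety is the proper choice of the cut-off, which is routine using a smooth choice of Bruhat-type sections.

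For the induction statement the main construction is the formula for a positive-type function of $\ind_K^H\pi$ in terms of $\pi$. If $\varphi(k)=\langle\pi(k)\xi,\xi\rangle$ is a positive-type function on $K$ and $f\in C_c(H)$, then a positive-type function of $\ind_K^H\pi$ on $H$ is
\begin{equation*}
\Phi_{f,\varphi}(h)=\int_K\int_H\overline{f(h_1)}\,\varphi(k)\,f(h_1^{-1}hk)\,\gamma(k)\,d\mu_H(h_1)\,d\mu_K(k),
\end{equation*}
where $\gamma$ is the appropriate modular factor from Blattner's construction. The plan is to verify that this formula depends continuously on the triple $(H,K,\varphi)$, provided we take $f\in C_c(G)$ (restricted to the varying $H_j$) and use the smooth choice of Haar measures on $\mathcal{K}(G)$. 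Then, given convergence $\varphi_j\to\varphi$ in the Fell sense on $(K_j,\pi_j)\to(K,\pi)$ (in the sense described above), the integrals $\Phi_{f,\varphi_j}$ on $H_j$ converge to $\Phi_{f,\varphi}$ on $H$; since every positive-type function of $\ind_K^H\pi$ can be approximated uniformly on compacta by finite sums of $\Phi_{f,\varphi}$'s with $\varphi$ a vector state of $\pi$ (cyclicity of Blattner's vectors of compact support modulo $K$), this gives $(H_j,\ind_{K_j}^{H_j}\pi_j)\to(H,\ind_K^H\pi)$.

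The hard part, in both halves, is the joint continuity of integration in the pair $(K,\pi)$: one has to exploit the smooth choice of Haar measures on $\mathcal{K}(G)$ together with Lemma \ref{lem-convergence} to control how functions supported near a subgroup $K$ behave when pulled back to nearby subgroups $K_j$. For induction the additional difficulty is that $\ind_{K_j}^{H_j}\pi_j$ lives on a Hilbert space that depends on $j$; this is why I would work exclusively with positive-type functions and the explicit formula for $\Phi_{f,\varphi}$ rather than trying to compare Hilbert spaces directly, reducing the problem to continuity of an explicit double integral whose integrand depends continuously on all relevant data.
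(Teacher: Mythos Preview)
The paper does not give its own proof of this theorem: it is quoted directly from Fell's work \cite[Theorem 3.2 and Theorem 4.2]{Fel}, and the only indication of method is the sentence immediately preceding the statement, which says that Fell's description of the topology on $\mathcal{S}(G)$ in terms of functions of positive type on subgroups yields the continuity of restriction and induction. Your sketch is precisely along these lines---approximating positive-type functions of $\pi|_H$ by restrictions of positive-type functions of the $\pi_j$, and using the explicit integral formula for positive-type functions of $\ind_K^H\pi$ to control the induction side---so your approach agrees with what the paper invokes.

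One minor comment: in your restriction argument, the passage from integrals over $K_j$ to integrals over $H_j$ via a ``cut-off'' is not quite the right picture, since in general $H_j$ has measure zero in $K_j$. Fell's actual argument (and the cleaner route) uses his characterization of convergence in $\mathcal{S}(G)$ by uniform-on-compacta approximation of positive-type functions extended by zero to $G$ (this is the content of \cite[Theorem 3.1]{Fel}, alluded to in Lemma~\ref{lemma-convergent-characters} of the present paper for characters); once convergence is phrased that way, restriction becomes almost immediate, since if $\varphi_j\to\varphi$ uniformly on compacta of $G$ then certainly $\varphi_j|_{H_j}\to\varphi|_H$ in the same sense. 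Your induction sketch, on the other hand, is essentially Fell's argument.
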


The following lemma is a direct  consequence of \cite[Theorem 3.1']{Fel} (and the remark following that theorem):

\begin{lemma}\label{lemma-convergent-characters}
Let $(H_n, \chi_n)$ be a sequence in $\mathcal{S}(G)$, let $(H,
\chi) \in \mathcal{S}(G)$, and suppose that $\chi, \chi_n$, $n \in
\N$, are characters. Then the following are equivalent:
\begin{itemize}
\item[(i)] $(H_n, \chi_n) \rightarrow (H, \chi)$ in
$\mathcal{S}(G)$.
\item[(ii)] $H_n \rightarrow H$ in $\mathcal{K}(G)$ and for every
subsequence $(H_{n_k})$ of $(H_n)$ and every element $h_{n_k} \in
H_{n_k}$ with $h_{n_k} \rightarrow h$ for some $h \in H$, one has
$\chi_{n_k}(h_{n_k}) \rightarrow \chi(h)$ in $\mathbb{C}$.
\end{itemize}
\end{lemma}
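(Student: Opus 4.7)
The plan is to deduce the lemma from Fell's Theorem 3.1' in \cite{Fel} by specializing the general positive-definite-function criterion to one-dimensional representations. Fell's theorem characterizes convergence $(H_n,\pi_n)\to(H,\pi)$ in $\mathcal S(G)$ by two conditions: first, $H_n\to H$ in $\mathcal K(G)$; and second, every diagonal matrix coefficient $p(h)=\langle\pi(h)\xi,\xi\rangle$ arising from $\pi$ can be approximated by diagonal matrix coefficients of the $\pi_n$, where ``approximation'' is the natural notion for functions living on varying subgroups, namely: for every sequence $h_{n_k}\in H_{n_k}$ with $h_{n_k}\to h\in H$, one has $p_{n_k}(h_{n_k})\to p(h)$ (uniformly on compacta of $G$ in the sense made precise in the remark following Theorem 3.1').

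For a one-dimensional representation, i.e.\ a character, the Hilbert space is $\mathbb C$ and the unique unit vector (up to phase) yields the matrix coefficient $h\mapsto\chi(h)$ itself. Thus the ``family of diagonal matrix coefficients'' is, up to scalar, just the single function $\chi$, and Fell's approximation condition collapses to the single requirement that $\chi_{n_k}(h_{n_k})\to\chi(h)$ whenever $h_{n_k}\in H_{n_k}$ converges to $h\in H$. Conversely, any function on $H_{n_k}$ approximating $\chi$ in Fell's sense must, at the level of scalars, be a multiple of $\chi_{n_k}$, and normalization forces this multiple to tend to $1$, so nothing is lost in the specialization.

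With this dictionary in hand, the two implications are read off directly. For (i)$\Rightarrow$(ii): the continuity of the projection $\mathcal S(G)\to\mathcal K(G)$ (Remark \ref{rem-SG-properties}(b)) forces $H_n\to H$, and Fell's criterion applied to the matrix coefficient $\chi$ yields the subsequential convergence of character values along any convergent sequence $h_{n_k}\to h$. For (ii)$\Rightarrow$(i): given $H_n\to H$ and the character value convergence along convergent sequences, Fell's criterion is satisfied with $p=\chi$ and $p_{n_k}=\chi_{n_k}$, so $(H_n,\chi_n)\to(H,\chi)$ in $\mathcal S(G)$.

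The only potentially delicate point is justifying that Fell's general approximation condition really does reduce to the pointwise-along-convergent-sequences statement phrased in (ii). This is exactly what the remark following \cite[Theorem 3.1']{Fel} addresses for functions of positive type on varying subgroups; in the one-dimensional case the reduction is immediate because normalized matrix coefficients coincide with the character itself. Once this identification is made, no further computation is required, which is why the lemma is stated as a ``direct consequence'' and not given an independent proof in the body of the paper.
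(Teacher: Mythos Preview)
Your proposal is correct and matches the paper's approach exactly: the paper does not give an independent proof but simply states the lemma as a direct consequence of \cite[Theorem~3.1']{Fel} together with the remark following it, and your write-up spells out precisely why that citation suffices for one-dimensional representations. The reduction you describe---that for characters the only normalized diagonal matrix coefficient is the character itself, so Fell's positive-definite-function approximation criterion collapses to the pointwise condition in (ii)---is the intended argument.
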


\begin{remark}\label{rem-expo-limits}
Suppose that $(G,\G)$ is a $k$-Lie pair and that $(H_n)_{n\in \NN}$ is a sequence of closed exponentiable subgroups of $G$. Let $\h_n=\log(H_n)$ denote the corresponding closed subalgebras of $\G$. 
Assume that $H_n\to H$ for some closed subgroup of $G$.
Regarding 
$\G$ as a locally compact abelian group, it follows from Lemma \ref{lem-convergence} and the fact that $\exp:\G\to G$ is a homeomorphism, that $\h_n\to \h=\log(H)$ in $\mathcal K(\G)$, and another easy application of 
Lemma \ref{lem-convergence} shows that $\h$ is a subalgebra of $\G$. In particular, it follows that the exponentiable subgroups of $G$ are closed in $\mathcal K(G)$.
\end{remark}

Using all these preparations, we are now able to prove

\begin{proposition}\label{continuity-of-kappa}
Let $(G,\G)$ be a $(\m,\epsilon)$-dualizable nilpotent $k$-Lie pair. Then the
Kirillov map
$$
\kappa : \G^* \longrightarrow \Prim(C^*(G)),\; f \mapsto
\ker(\ind_R^G \varphi_{f}),
$$ is continuous.
\end{proposition}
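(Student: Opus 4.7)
The plan is to prove continuity at an arbitrary point $f\in \G^*$ by combining Fell's continuity theorem for induction (Theorem \ref{continuity-of-induction-restriction}) with the polarization-independence result of Proposition \ref{prop-polarization-independence}. Let $(f_n)$ be a sequence in $\G^*$ converging to $f$, choose a standard polarization $\r$ for $f$ and, for each $n$, a standard polarization $\r_n$ for $f_n$, and write $\pi_f=\ind_R^G\varphi_f$ and $\pi_{f_n}=\ind_{R_n}^G\varphi_{f_n}$. Since a continuous map $\Rep(C^*(G))\to\Prim(C^*(G))$ is induced by $\pi\mapsto\ker\pi$, it is enough to prove $\pi_{f_n}\to\pi_f$ in $\Rep(G)$. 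By Remark \ref{rem-weak-containment}(a) this amounts to showing that every subsequence of $(\pi_{f_n})$ has a further subsequence weakly containing $\pi_f$, so I fix an arbitrary subsequence (re-indexed by $n$) and search for such a further subsequence.

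Because $G$ is second countable, so is the additive group $\G$, and hence the space $\mathcal K(\G)$ of closed subgroups is a compact metrizable space in the Fell topology. After a further extraction I may assume $\r_n\to\s$ in $\mathcal K(\G)$ for some closed subgroup $\s$ of $\G$. Using Lemma \ref{lem-convergence} (in both directions) together with continuity of addition, of $\Lambda_k$-scalar multiplication and of the Lie bracket, a routine check shows that $\s$ is a closed $\Lambda_k$-subalgebra; in particular $\s$ is exponentiable, with $S=\exp(\s)$ (Remark \ref{rem-expo-limits}). Moreover, given $X,Y\in\s$, choose $X_n,Y_n\in\r_n$ with $X_n\to X$ and $Y_n\to Y$; then $[X_n,Y_n]\in\r_n$ and $f_n([X_n,Y_n])=0$, and passing to the limit—using $f_n\to f$ uniformly on the compact set $\{[X_n,Y_n]\}\cup\{[X,Y]\}$—gives $f([X,Y])=0$. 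Thus $\s$ is $f$-subordinate, so $\varphi_f$ restricts to a well-defined character of $S$ as in Remark \ref{character->representation}.

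Next I verify the hypothesis of Lemma \ref{lemma-convergent-characters} for the pairs $(R_n,\varphi_{f_n})$ and $(S,\varphi_f|_S)$ in $\mathcal S(G)$. Given a subsequence $(R_{n_k})$ and elements $h_{n_k}\in R_{n_k}$ with $h_{n_k}\to h$, write $h_{n_k}=\exp X_{n_k}$ with $X_{n_k}\in\r_{n_k}$ and $h=\exp X$ with $X\in\s$; continuity of $\log$ yields $X_{n_k}\to X$ in $\G$, and compact-open convergence $f_n\to f$ combined with joint continuity gives $f_{n_k}(X_{n_k})\to f(X)$ in $\m$. Therefore
\[
\varphi_{f_{n_k}}(h_{n_k})=\epsilon\bigl(f_{n_k}(X_{n_k})\bigr)\;\longrightarrow\;\epsilon\bigl(f(X)\bigr)=\varphi_f(h).
\]
Thus $(R_n,\varphi_{f_n})\to(S,\varphi_f|_S)$ in $\mathcal S(G)$, and applying Theorem \ref{continuity-of-induction-restriction} with ambient group $G$ we conclude $\ind_{R_n}^G\varphi_{f_n}\to\ind_S^G\varphi_f$ in $\Rep(G)$.

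Finally, since $\r$ is a standard polarization for $f$ and $\s$ is $f$-subordinate, Proposition \ref{prop-polarization-independence} yields $\pi_f\prec\ind_S^G\varphi_f$. Combined with the previous paragraph via Remark \ref{rem-weak-containment}(b), this gives $\pi_{f_n}\to\pi_f$ in $\Rep(G)$ along the chosen subsequence, which completes the subnet argument. The main obstacle in this proof is the mismatch between the limit subalgebra $\s$ of a convergent sequence of standard polarizations $\r_n$ and the standard polarization $\r$ for the limit $f$: the recursive construction of Remark \ref{rem-standard} is riddled with non-canonical choices (of ideals inside kernels, of maximal abelian subalgebras in second centers) that cannot in general be synchronized along $n$, so $\s$ is typically neither equal to $\r$ nor even a polarization for $f$. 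Proposition \ref{prop-polarization-independence} is precisely the tool that bridges this gap, since it produces the weak containment $\pi_f\prec\ind_S^G\varphi_f$ in the direction required by the subnet convergence argument.
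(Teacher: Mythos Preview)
Your proof is correct and follows essentially the same approach as the paper: pass to a subsequence so that the standard polarizations $\r_n$ converge to some closed $f$-subordinate subalgebra $\s$, use Lemma \ref{lemma-convergent-characters} and Theorem \ref{continuity-of-induction-restriction} to obtain $\ind_{R_n}^G\varphi_{f_n}\to\ind_S^G\varphi_f$, and then invoke Proposition \ref{prop-polarization-independence} together with Remark \ref{rem-weak-containment} to conclude. Your framing of the subnet argument via Remark \ref{rem-weak-containment}(a) is slightly more explicit than the paper's, but the ingredients and their assembly are the same.
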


\begin{proof}
Let $(f_n)_{n \in \N}$ be a sequence in $\G^*$ and suppose that
$f_n \rightarrow f$ in $\G^*$ for some $f \in \G^*$. For every $n
\in \N$, let $\mathfrak{r}_n$ be a standard  polarizing subalgebra for
$f_n$ and define $R_n:= \exp(\mathfrak{r}_n)$. We may regard $(\mathfrak{r}_n)_{n\in \NN}$
as a sequence in the compact space $\mathcal K(\G)$ of closed subgroup of $\G$,
and we may hence assume, after passing to a subsequence if necessary, 
 that $\r_n\to \s$ for some subalgebra $\s$ of $\G$ and $R_n\to S=\exp(\s)$.

 We claim that $\mathfrak{s}$ is
$f$-subordinate. For this, let $X$ and $Y$ be two arbitrary
elements of $\mathfrak{s}$. By Lemma
\ref{lem-convergence} and by passing to a suitable
subsequence if necessary,
 we can find for every $n \in \N$, elements $X_n,Y_n
\in \mathfrak{r}_n$, such that $X_n\to X$ and $Y_n\to Y$ in $\G$. Since the
commutator is continuous, we see that  $[X_n,Y_n] \to  [X,Y]$
 and therefore
\[
0= f_n ([X_n,Y_n]) \rightarrow f([X,Y]).
\]
It follows then from Proposition \ref{prop-polarization-independence} that if $\r$ is any standard polarization 
for $f$ and $R=\exp(\r)$, then $\ind_R^G\varphi_f\prec \ind_S^G\varphi_f$. 
Moreover, if $x_n\in R_n$ such that $x_n\to x$ for some $x\in S$, then $X_n:=\log(x_n)\to \log(x)=:X$ 
and hence $f_n(X_n)\to f(X)$ in $\m$. But this implies that
$$\varphi_{f_n}(x_n)=\epsilon\circ f_n(X_n)\to\epsilon\circ f(X)=\varphi_f(x),$$
which by Lemma \ref{lemma-convergent-characters} proves that $(R_n,\varphi_{f_n})\to (S,\varphi_f)$
in $\mathcal S(G)$. By Theorem \ref{continuity-of-induction-restriction} we see that 
$\ind_{R_n}^G\varphi_{f_n}\to \ind_S^G\varphi_f$ in $\Rep(G)$, and since 
$\ind_R^G\varphi_f\prec \ind_S^G\varphi_f$ it follows  from
Remark \ref{rem-weak-containment} that $\ind_{R_n}^G\varphi_{f_n}\to \ind_R^G\varphi_f$
in $\widehat{G}$. But then we also get 
$$\kappa(f_n)=\ker(\ind_{R_n}^G\varphi_{f_n})\to \ker(\ind_R^G\varphi_f)=\kappa(f)$$
in $\Prim(C^*(G))$. Thus $\kappa$ is continuous.
\end{proof}

Suppose that $(G,\G)$ is an $(\m,\epsilon)$-dualizable nilpotent $k$-Lie pair. 
 Let $\r\subseteq \G$ be  a standard polarizing subalgebra for a given $f\in \G^*$ and let $x\in G$. 
Then  one easily checks that $\Ad(x)(\r)$ is a standard polarizing subalgebra for $\Ad^*(x)(f)$ and that 
$$\varphi_{\Ad^*(x)f}(y)=\varphi_f(x^{-1}yx)=x\cdot\varphi_f(y)$$
 for all $y\in \exp(\Ad(x)\r)=xRx^{-1}$, where $R=\exp(\r)$. 
Thus it follows from Remark \ref{rem-properties-of-induction} that 
$$\ind_R^G\varphi_f\cong
\ind_{xRx^{-1}}^G\varphi_{\Ad^*(x)f},$$
 which implies that the 
Kirrolov map is constant on $\Ad^*(G)$-orbits in $\G^*$. 

Suppose now that $f$ and $f'$ are in the same $\Ad^*(G)$-quasi-orbit in $\G^*$. Recall that this means that 
$f'\in \overline{\Ad^*(G)f}$ and $f\in \overline{\Ad^*(G)f'}$. 
Since $\kappa$ is constant on orbits, it follows from the continuity of $\kappa$ that
$\kappa(f)\in \overline{\kappa(f')}$ and vice versa. Since $\Prim(C^*(G))$ is a T$_0$-space, this implies 
that $\kappa(f)=\kappa(f')$. Hence  we get

\begin{corollary}\label{tilde-kappa-is-well-defined}
Let $(G, \G)$ be an $(\m,\epsilon)$-dualizable nilpotent $k$-Lie pair. Then the
{\em Kirillov-orbit map}
\[
\tilde{\kappa}: \G^*/\!_{\sim} \rightarrow \Prim(C^*(G)), \;
\mathcal{O} \mapsto \ker(\ind_{R}^G \varphi_{f}),
\]
where $f \in \G^*$ is any chosen representative of the coadjoint
quasi-orbit $\mathcal{O}$, is a well-defined continuous and surjective map.
\end{corollary}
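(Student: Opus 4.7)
The plan is essentially to assemble observations that are already in place, so the corollary becomes a packaging statement. The three properties to verify are well-definedness, continuity, and surjectivity, and each one reduces to a fact proved or noted earlier in the excerpt.

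For well-definedness, I would first record the orbit-invariance observed in the paragraph preceding the statement: if $\r$ is a standard polarization for $f\in\G^*$ and $x\in G$, then $\Ad(x)\r$ is a standard polarization for $\Ad^*(x)f$ (this follows from the construction in Remark \ref{rem-standard} since $\Ad(x)$ preserves the ascending central series and commutes with the recursion), the character $\varphi_{\Ad^*(x)f}$ equals $x\cdot\varphi_f$ on $xRx^{-1}=\exp(\Ad(x)\r)$, and hence Remark \ref{rem-properties-of-induction}(d) gives $\ind_R^G\varphi_f\cong\ind_{xRx^{-1}}^G\varphi_{\Ad^*(x)f}$. Thus $\kappa$ is constant on every $\Ad^*(G)$-orbit. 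To push this to quasi-orbits, I would invoke continuity of $\kappa$ (Proposition \ref{continuity-of-kappa}): if $f'\in\overline{\Ad^*(G)f}$, choose a net in $\Ad^*(G)f$ converging to $f'$, apply $\kappa$, and use that $\kappa$ is constant on the orbit to conclude $\kappa(f')\in\overline{\{\kappa(f)\}}$. The symmetric inclusion yields $\kappa(f)\in\overline{\{\kappa(f')\}}$ as well, so by the $T_0$ property of $\Prim(C^*(G))$ (which holds for any $C^*$-algebra) we must have $\kappa(f)=\kappa(f')$.

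For continuity of $\tilde\kappa$, equip $\G^*/\!\sim$ with the quotient topology from $\G^*$, so that the diagram
\[
\G^*\xrightarrow{\;q\;}\G^*/\!\sim\;\xrightarrow{\;\tilde\kappa\;}\Prim(C^*(G))
\]
commutes with $\tilde\kappa\circ q=\kappa$. By the universal property of the quotient topology, $\tilde\kappa$ is continuous if and only if $\kappa$ is, which is Proposition \ref{continuity-of-kappa}.

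Surjectivity is immediate from the second half of Proposition \ref{prop-standard-polarization}: given $P\in\Prim(C^*(G))$, that result produces $f\in\G^*$ and a standard polarization $\r$ of $f$ with $P=\ker(\ind_R^G\varphi_f)=\kappa(f)=\tilde\kappa([f])$.

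There is no serious obstacle here; the only point that requires a tiny argument is the quasi-orbit step, and even there the $T_0$ property of the primitive ideal space does the work. I would write the proof in one short block, citing Proposition \ref{prop-standard-polarization}, Proposition \ref{continuity-of-kappa}, and Remark \ref{rem-properties-of-induction}(d), and noting the $T_0$ property explicitly so that the quasi-orbit collapse looks transparent.
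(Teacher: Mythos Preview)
Your proposal is correct and mirrors the paper's own argument almost verbatim: the paper also establishes orbit-invariance via Remark~\ref{rem-properties-of-induction}(d), upgrades to quasi-orbit invariance using continuity of $\kappa$ (Proposition~\ref{continuity-of-kappa}) together with the $T_0$ property of $\Prim(C^*(G))$, and reads off surjectivity from Proposition~\ref{prop-standard-polarization}. There is nothing to add or change.
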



In the rest of this section we want to show that the Kirilov-orbit map of Corollary \ref{tilde-kappa-is-well-defined} is a homeomorphism, at least if $G$ satisfies the following regularity condition:

\begin{definition}\label{def-regular}
A $k$-Lie pair $(G,\G)$ is called {\em regular} if for any two closed subalgebras $\h$ and $\r$ of  $\G$ such that
 $[\h,\r]\subseteq \h$, the 
 sum $\h+\r$ is closed in $\G$. 
 \end{definition}

Note that if $\h$ and $\r$ are as in the definition, then $\h+\r$ is a closed subalgebra of $\G$ 
 and if $H=\exp(\h)$ and $R=\exp(\r)$, then 
 $HR=\exp(\h+\r)$ is a closed subgroup of $G$. 

For the proof of openness of the Kirillov-orbit map under this extra condition we rely heavily on the ideas of  Joy \cite{Joy}, in which convergence of a sequence 
in $\widehat{G}$ is described in terms of convergence of corresponding subgroup representations in 
$\mathcal S(G)$. 
Regularity of $(G,\G)$ is used in the proof of  the following lemma



\begin{lemma}\label{inducing-pair-lemma}
Let $(G,\G)$ be an $(\m,\epsilon)$-dualizable $k$-Lie pair and 
let $H$ be a closed normal, exponentiable subgroup of $G$ such 
 that $G/H$ is abelian. Let $f\in \G^*$, 
 $\pi\in \widehat{G}$ and  $\rho\in \widehat{H}$ with   $\ker(\pi)= \kappa(f)$ and 
$\ker(\rho)=\kappa(f|_\h)$, where $\h=\log(H)$.
 Then $\pi\prec \ind_H^G\rho$ and if $(G,\G)$ is regular, we also have $\rho\prec \pi|_H$.
 \end{lemma}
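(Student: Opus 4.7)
The plan is to translate both $\pi$ and $\rho$ into induced-representation form via standard polarizations, then combine Proposition \ref{prop-polarization-independence} with induction in steps and, for the regular case, Lemma \ref{lem-ind-res}. First I would choose a standard polarization $\r\subseteq\G$ for $f$, with $R=\exp(\r)$; by Proposition \ref{prop-standard-polarization} we have $\pi\sim\ind_R^G\varphi_f$. Similarly I pick a standard polarization $\s\subseteq\h$ for $f|_\h$, with $S=\exp(\s)$, so that $\rho\sim\ind_S^H\varphi_{f|_\h}$. Since $\varphi_{f|_\h}$ agrees with $\varphi_f|_S$ on $S$, induction in steps gives $\ind_H^G\rho\cong\ind_S^G\varphi_f$.

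For the first assertion, $\s$ is a closed $f$-subordinate subalgebra of $\G$ (because $f([\s,\s])=f|_\h([\s,\s])=0$), so Proposition \ref{prop-polarization-independence} applied in $\G$ yields $\pi\sim\ind_R^G\varphi_f\prec\ind_S^G\varphi_f\cong\ind_H^G\rho$, which is the desired weak containment.

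For the reverse containment under regularity, I would exploit that $H$ normal forces $\h$ to be an ideal of $\G$, so $[\h,\r]\subseteq\h$; regularity then makes $\h+\r$ closed, hence $HR=\exp(\h+\r)$ is a closed subgroup of $G$. Moreover $HR$ is normal in $G$, as its image in the abelian quotient $G/H$ is automatically a subgroup. Applying Lemma \ref{lem-ind-res} with $H\triangleleft HR=H\cdot R$ (the modular identity holds by unimodularity of nilpotent groups) yields
$$(\ind_R^{HR}\varphi_f)|_H\cong\ind_{R\cap H}^H(\varphi_f|_{R\cap H}).$$
Since $\r\cap\h$ is a closed $f|_\h$-subordinate subalgebra of $\h$ and $\s$ is standard for $f|_\h$, Proposition \ref{prop-polarization-independence} applied inside $\h$ gives $\rho\sim\ind_S^H\varphi_{f|_\h}\prec\ind_{R\cap H}^H(\varphi_f|_{R\cap H})\cong(\ind_R^{HR}\varphi_f)|_H$. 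Setting $\sigma:=\ind_R^{HR}\varphi_f$, induction in steps gives $\pi\sim\ind_R^G\varphi_f\cong\ind_{HR}^G\sigma$; normality of $HR$ in $G$ combined with Remark \ref{rem-group-weak-containment}(b) yields $\sigma\prec(\ind_{HR}^G\sigma)|_{HR}\sim\pi|_{HR}$, and continuity of restriction (Theorem \ref{thm-cont-ind-res}) then gives $\sigma|_H\prec\pi|_H$. Transitivity produces $\rho\prec\pi|_H$.

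The main obstacle is this second containment: regularity is essential because the whole bridge through $HR$ collapses if $HR$ is not closed --- neither Lemma \ref{lem-ind-res} nor Remark \ref{rem-group-weak-containment}(b) (which needs $HR$ to be a genuine closed normal subgroup) is available otherwise. Verifying that $HR$ is normal and that $\h$ is actually a Lie-ideal (so that $[\h,\r]\subseteq\h$, not merely $\Ad(G)$-invariance of $\h$) is a small technicality one must dispatch along the way, using the polynomial nature of the Campbell--Hausdorff series together with the $\Lambda_k$-module structure of $\G$.
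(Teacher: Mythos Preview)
Your proposal is correct and follows essentially the same route as the paper's proof: both arguments pick standard polarizations $\r$ for $f$ and $\s$ for $f|_\h$, use Proposition~\ref{prop-polarization-independence} together with induction in steps for $\pi\prec\ind_H^G\rho$, and in the regular case pass through the closed normal subgroup $HR$, combining Remark~\ref{rem-group-weak-containment}(b), Lemma~\ref{lem-ind-res}, and Proposition~\ref{prop-polarization-independence} applied in $\h$ to obtain $\rho\prec\pi|_H$. Your write-up is in fact slightly more explicit about why $HR$ is closed and normal (the paper just asserts this), and you should replace the $\cong$ in $\ind_H^G\rho\cong\ind_S^G\varphi_f$ by $\sim$, since $\rho$ is only weakly equivalent to $\ind_S^H\varphi_f$.
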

  \begin{proof}
  Let $\r$ be a standard polarizing subalgebra of $\G$ for $f$ and let 
 $\s$ be a standard polarizing subalgebra of $\h$ for  $f|_\h$. Let $R=\exp(\r)$ and $S=\exp(\s)$.
 Since $\s$ is subordinate to $f$ it follows from Proposition \ref{prop-polarization-independence} that 
 $\pi\sim\ind_R^G\varphi_f\prec\ind_S^G\varphi_f\cong \ind_H^G(\ind_S^H\varphi_f)\sim \ind_H^G\rho$,
 which proves the first assertion.
 
 Assume now that $(G,\G)$ is regular. Then $HR$ is a closed normal subgroup of $G$ and it follows from 
 Remark \ref{rem-group-weak-containment} and induction in steps that
 $\ind_R^{HR}\varphi_f\prec (\ind_R^G\varphi_f)|_{HR}\sim \pi|_{HR}$. 
 This implies that $(\ind_R^{HR}\varphi_f)|_H\prec\pi|_H$ and it suffices to show that 
 $\rho\prec (\ind_R^{HR}\varphi_f)|_H$.
 By Lemma \ref{lem-ind-res} we have $(\ind_R^{HR}\varphi_f)|_H\cong \ind_{R\cap H}^H\varphi_f$ and 
 since $\r\cap \h$ is clearly subordinate to $f|_\h$, we see from Proposition \ref{prop-polarization-independence} that $\rho\sim \ind_S^H\varphi_f\prec\ind_{R\cap H}^H\varphi_f\cong  (\ind_R^{HR}\varphi_f)|_H$, which finishes the proof.
 \end{proof}

%
%


The following lemma gives the main step in the proof of the openness of the Kirillov map.

\begin{lemma}\label{lem-open}
Suppose that $(G,\G)$ is a regular $(\m,\epsilon)$-dualizable $k$-Lie pair. Let  $(f_n)_{n\in \NN}$ be a sequence in $\G^*$ and 
let $f\in \G^*$ such that $\kappa(f_n)\to\kappa(f)$ in $\Prim(C^*(G))$.
Let $\r_n$ be a standard polarizing subalgebra for $f_n$ and let $R_n=\exp(\r_n)$.
Then, after passing to a subsequence if necessary,  there exists a closed subgroup $S$ of $G$ such that 
$\s=\log(S)$ is subordinate to $f$, and a sequence $(x_n)_{n\in \NN}$ in $G$ such that
$$(x_nR_nx_n^{-1}, \varphi_{\Ad^*(x_n)f_n})\to (S, \varphi_f)\quad\text{in $\mathcal S(G)$.}$$
\end{lemma}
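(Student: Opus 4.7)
My plan is to proceed by induction on the nilpotence length $l$ of $G$. The base case $l=1$ is immediate: $G$ is abelian, $\r_n=\G=\s$, $R_n=G=S$, and the given convergence $\ker\varphi_{f_n}=\kappa(f_n)\to\kappa(f)=\ker\varphi_f$ in the Hausdorff space $\Prim(G)=\widehat G$ forces $\varphi_{f_n}\to\varphi_f$; Lemma \ref{lemma-convergent-characters} then gives $(G,\varphi_{f_n})\to(G,\varphi_f)$ in $\mathcal S(G)$ and we may take $x_n=e$.

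For the inductive step I would first reduce to the case where $f$ is faithful on $\z=\z(\G)$. Let $\j\subseteq\ker f$ be the largest ideal (Lemma \ref{largest-ideal-in-kernel-of-character}) and $J=\exp(\j)$; then $J\subseteq\ker\pi_f$, and by compactness of $\mathcal K(\G)$ together with the continuity that underlies the Kirillov map, after passing to a subsequence the corresponding largest ideals $\j_n\subseteq\ker f_n$ can be assumed to limit to some $\j'\supseteq\j$, which reduces the problem to the strictly shorter Lie pair $(G/J,\G/\j)$. So assume $f|_\z$ is faithful; pick a maximal abelian $\a\subseteq\z_2(\G)$, let $\n$ be its centralizer, $N=\exp(\n)$, and use Remark \ref{rem-standard} to arrange that the chosen standard polarization $\r$ for $f$ satisfies $\r\subseteq\n$; by Proposition \ref{prop-standard-polarization}, $\pi_f\cong\ind_N^G\rho$ with $\rho=\ind_R^N\varphi_f\in\widehat N$.

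The central step, and the main obstacle, is to align the $f_n$ with $f$ by conjugation. By the recursive construction of Remark \ref{rem-standard}, each $\r_n$ lies in the centralizer $\n_n$ of a maximal abelian $\a_n\subseteq\z_2(\G/\j_n)$; after passing to a subsequence in the compact space $\mathcal K(\G)$, the subalgebras $\a_n$ and $\n_n$ converge. Using the nondegenerate bicharacter of Remark \ref{remark-nondegenerate}, which precisely encodes the coadjoint action of $G/N$ on functionals restricted to $\a/\z$, I would produce $x_n\in G$ such that $\Ad(x_n)\a_n\to\a$ and $\Ad(x_n)\n_n\to\n$ in $\mathcal K(\G)$, and simultaneously $\Ad^*(x_n)f_n|_\z\to f|_\z$. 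With this alignment, Lemma \ref{inducing-pair-lemma} (whose regularity hypothesis is essential) gives $\rho\prec\pi_f|_N$, so combining the convergence $\kappa(f_n)\to\kappa(f)$ in $\Prim(G)$ with Fell's subnet criterion and Schochetman's theorem (Remark \ref{rem-weak-containment}(a),(c)) extracts a subsequence along which $\ker(\ind_{R_n}^N\varphi_{\Ad^*(x_n)f_n})\to\ker\rho$ in $\Prim(N)$.

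Finally, the inductive hypothesis applied to $(N,\n)$, which has strictly smaller nilpotence length, yields $y_n\in N$ and a closed subgroup $S\subseteq N$ with $\log S\subseteq\n$ subordinate to $f|_\n$ (and hence to $f$) such that $(y_nR_ny_n^{-1},\varphi_{\Ad^*(y_n)f_n})\to(S,\varphi_f)$ in $\mathcal S(N)$. Since the canonical inclusion $\mathcal S(N)\hookrightarrow\mathcal S(G)$ is continuous (Remark \ref{rem-SG-properties}(c)), replacing $x_n$ by the composite $y_nx_n$ yields the required convergence in $\mathcal S(G)$ with the same subgroup $S$. The delicate point throughout is the alignment: the nondegeneracy of the pairing in Remark \ref{remark-nondegenerate} is what allows coadjoint conjugation to absorb the ambiguity in the choice of maximal abelian subalgebra in $\z_2$, and regularity is what makes Lemma \ref{inducing-pair-lemma} available to propagate the convergence from $\Prim(G)$ down to $\Prim(N)$.
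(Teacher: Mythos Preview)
Your overall shape---descend from $G$ to a subgroup of smaller complexity, apply an inductive hypothesis there, and pull back to $\mathcal S(G)$---matches the paper's strategy, but two of your reductions do not go through as written.

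First, the quotient reduction. You want to pass to $(G/J,\G/\j)$ where $\j$ is the largest ideal in $\ker f$, but the $f_n$ need not vanish on $\j$, so they do not descend to $\G/\j$. Your sentence ``$\j_n$ can be assumed to limit to some $\j'\supseteq\j$'' has no justification (there is no reason the limit ideal contains $\j$), and even if it did, you would still have to make sense of the whole sequence in a single quotient. The paper never takes such a quotient; it works in $G$ throughout.

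Second, and more seriously, the alignment step is false. You claim the nondegenerate bicharacter of Remark~\ref{remark-nondegenerate} lets you conjugate so that $\Ad(x_n)\a_n\to\a$. But for $Y\in\z_2(\G)$ one has $\Ad(\exp X)Y=Y+[X,Y]$ with $[X,Y]\in\z$, so the adjoint action is trivial on $\z_2/\z$. Any maximal abelian subalgebra of $\z_2$ contains $\z$ and is therefore $\Ad(G)$-invariant: distinct maximal abelian subalgebras of $\z_2$ are never conjugate. The bicharacter encodes how $\Ad^*$ moves functionals, not how it moves subalgebras, and cannot be used to align the $\a_n$ with a fixed $\a$.

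The paper sidesteps both problems. After passing to a subsequence so that all $\r_n$ have the same grade $m$, it uses the full chain $G=G_n^0\supseteq G_n^1\supseteq\cdots\supseteq G_n^m=R_n$ from Remark~\ref{rem-standard} and runs a finite descent on the index $i$, not on the nilpotence length. At each step it lets $G_n^{i+1}\to H^{i+1}$ in $\mathcal K(G)$ (whatever that limit happens to be---no attempt to force it to equal a prechosen $N$), checks from Lemma~\ref{lem-convergence} that $H^{i+1}\trianglelefteq H^i$ with abelian quotient, and then combines continuity of restriction, Lemma~\ref{inducing-pair-lemma} (this is where regularity enters), and Remark~\ref{rem-weak-containment}(c) to find $x_n^i\in G_n^i$ with
\[
\bigl(G_n^{i+1},\,\ind_{x_n^iR_n(x_n^i)^{-1}}^{\,G_n^{i+1}}\varphi_{\Ad^*(x_n^i)f_n}\bigr)\longrightarrow
\bigl(H^{i+1},\,\ind_{S^{i+1}}^{\,H^{i+1}}\varphi_f\bigr)
\]
in $\mathcal S(G)$, where $S^{i+1}=\exp(\s^{i+1})$ for a standard polarization of $f|_{\h^{i+1}}$. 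After $m$ steps the induced representations on the right become one-dimensional, forcing $H^m=S^m=:S$ and yielding the desired character convergence. The key difference from your plan is that the intermediate groups $H^i$ are \emph{produced by the limit}, not prescribed in advance, so no conjugacy of maximal abelian subalgebras is ever needed.
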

\begin{proof}
After passing to a subsequence, if necessary, we may assume that all standard polarizations $\r_n$ have the same degree $m=m(f_n,\r_n)$, as defined in Definition \ref{def-standard}. It follows then from 
Remark \ref{rem-standard} that, for each $n\in \NN$, we find a sequence of subgroups
$$G=G_n^0\supseteq G_n^1\supseteq G_n^2\supseteq \cdots \supseteq G_n^m=R_n$$
with the properties as listed in that remark. By definition, we have $\kappa(f_n)=\ker(\ind_{R_n}^G\varphi_{f_n})$ for all $n\in \NN$.

Now fix some $i<m$ and assume that there exists a closed subgroup $H^i$ of $G$ such 
that $(G_n^i, \ind_{R_n}^{G_n^i}\varphi_{f_n})\to (H^i, \ind_{S^i}^{H^i}\varphi_f)$ in $\mathcal S(G)$, 
where $S^i=\log{\s^i}$ for some standard polarizing subalgebra $\s^i$ of $\h^i=\log(H^i)$ for $f|_{\h^i}$.
After passing to a subsequence, if necessary, we may assume that $G_n^{i+1}\to H^{i+1}$ for some 
closed subgroup $H^{i+1}$ of $G$. 
We claim that $H^{i+1}$ is normal in $H^i$ and that $H^i/H^{i+1}$ is abelian. For this let $x,y\in H^i$.
After passing to a subsequence we may assume that there are $x_n,y_n\in G_n^i$ such that $x_n\to x$ and 
$y_n\to y$. Then $x_ny_nx_n^{-1}y_n^{-1}\to xyx^{-1}y^{-1}$ and it follows from Lemma \ref{lem-convergence} 
that $xyx^{-1}y^{-1}\in H^{i+1}$. This proves the claim. 

Let $\h^{i+1}=\log(H^{i+1})$ and let
$\s^{i+1}$ be a standard polarizing subalgebra 
for $f|_{\h^{i+1}}$. 
By continuity of restriction, we see that
$$(G_n^{i+1}, (\ind_{R_n}^{G_n^i}\varphi_{f_n})|_{G_n^{i+1}})\to 
(H^{i+1}, (\ind_{S^i}^{H^i}\varphi_f)|_{H^{i+1}})$$
in $\mathcal S(G)$. By Lemma \ref{inducing-pair-lemma} we have $\ind_{S^{i+1}}^{H^{i+1}}\varphi_f\prec (\ind_{S^i}^{H^i}\varphi_f)|_{H^{i+1}}$. Thus it follows from Remark \ref{rem-weak-containment} 
 that 
$$(G_n^{i+1}, (\ind_{R_n}^{G_n^i}\varphi_{f_n})|_{G_n^{i+1}})\to 
(H^{i+1}, \ind_{S^{i+1}}^{H^{i+1}}\varphi_f)$$
in $\mathcal S(G)$. By Remark \ref{rem-group-weak-containment} we have
$$(\ind_{R_n}^{G_n^i}\varphi_f)|_{G_n^{i+1}}\sim \{x\cdot (\ind_{R_n}^{G_n^{i+1}}\varphi_f): x\in G_n^1\}$$
for all $n\in \NN$, so it follows from part (c) of Remark \ref{rem-weak-containment} that, after passing to a subsequence if necessary, we can find $x_n^i\in G_n^i$ such that 
$$(G_n^{i+1}, x_n^i\cdot(\ind_{R_n}^{G_n^{i+1}}\varphi_{f_n}))\to 
(H^{i+1}, \ind_{S^{i+1}}^{H^{i+1}}\varphi_f).$$
Since 
$$x_n^i\cdot(\ind_{R_n}^{G_n^{i+1}}\varphi_{f_n})\cong \ind_{x_n^iR_n(x_n^i)^{-1}}^{G_n^{i+1}}\varphi_{\Ad^*(x_n)f_n}$$
for all $n\in \NN$, we now see that
$$(G_n^{i+1}, \ind_{x_n^iR_n(x_n^i)^{-1}}^{G_n^{i+1}}\varphi_{\Ad^*(x_n^i)f_n})\to 
(H^{i+1}, \ind_{S^{i+1}}^{H^{i+1}}\varphi_f)$$
in $\mathcal S(G)$.
Now, starting this procedure at $i=0$, where we have the convergent sequence
$$(G, \ind_{R_n}^G\varphi_{f_n})\to (G, \ind_R^G\varphi_f)$$
by the  assumption that $\kappa(f_n)\to \kappa(f)$ in $\Prim(C^*(G))$, 
and passing from $f_n$ to $\Ad^*(x_n^i)f_n$ for suitable 
$x_n^i\in G_n^i$ (and a suitable subsequence of $(f_n)_{n\in \NN}$) 
in each step $i\to i+1$, we will arrive after $m$ steps at a convergent (sub-)sequence
$$(x_nR_nx_n^{-1}, \varphi_{\Ad^*(x_n)f_n})\to (S, \varphi_f)$$
with $x_n=x_n^{m-1}x_n^{m-2}\cdots x_n^0$. Note that we do have the character $\varphi_f$ on the right hand side, since the one-dimensional representations in $\mathcal S(G)$ are closed in $\mathcal S(G)$.
Hence in the step $(m-1)\to m$ of the above procedure we must have $\ind_{S^m}^{H^m}\varphi_f$ a character, which implies that $H^m=S^m=:S$.

\end{proof}

\begin{remark}\label{rem-extension}
Suppose that $G$ is an abelian locally compact group and that $(H_n)_{n\in \NN}$ is a sequence 
of closed subgroups such that $H_n\to H$ in $\mathcal K(G)$. Let $(\chi_n)_{n\in \NN}$ be a sequence 
in $\widehat{G}$ and let $\chi\in \widehat{G}$ such that 
$$(H_n,\chi|_{H_n})\to (H, \chi|_H)\quad\text{in $\mathcal S(G)$.}$$
Then, after passing to a subsequence if necessary, we can find elements $\mu_n\in H_n^\perp$ for all $n\in \NN$ such that $\chi_n\cdot\mu_n\to \chi$ in $\widehat{G}$.

To see this well-known fact we simply use continuity of induction, to see that 
$\ind_{H_n}^G(\chi_n|_{H_n})\to \ind_H^G(\chi|_H)$ in $\Rep(G)$. Since $\chi\prec \ind_H^G(\chi|_H)$
and since $\ind_{H_n}^G(\chi_n|_{H_n})\sim \chi_n\cdot \widehat{G/H_n}=\chi_n\cdot H_n^{\perp}$ for all $n\in \NN$, the result follows from parts (a) and (c) of  
 Remark \ref{rem-weak-containment}.
 \end{remark}

\begin{lemma}\label{lem-extension}
Suppose that $(f_n)_{n\in \NN}$ is a sequence in $\G^*$ and that $(\mathfrak r_n)_{n\in \NN}$ is sequence of closed subalgebras of $\G$ such that each $\r_n$ is subordinate to $f_n$. Let $f\in \G^*$ and let $\s$ be 
a closed subalgebra of $\G$ which is subordinate to $f$. Suppose further that 
$$(R_n,\varphi_{f_n})\to (S,\varphi_f)\quad \text{in $\mathcal S(G)$.}$$
Then, after passing to a subsequence, there exist elemens $g_n\in \r_n^{\perp}$ such that 
$f_n+g_n\to f$ in $\G^*$.
\end{lemma}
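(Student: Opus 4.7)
The plan is to reduce the statement to a purely abelian problem on $\G$ and then appeal to Remark \ref{rem-extension}. Viewing $\G$ as a second countable locally compact abelian group, each closed subalgebra $\r_n$ is in particular a closed subgroup of $\G$, and the homeomorphism $\exp:\G\to G$ restricts to a homeomorphism $\r_n\to R_n$ (and similarly $\s\to S$). Under this identification the defining formula $\varphi_{f_n}(\exp X)=\epsilon(f_n(X))$ shows that the character $\varphi_{f_n}$ on $R_n$ corresponds pointwise to the character $(\epsilon\circ f_n)|_{\r_n}$ on the additive group $\r_n$. The Fell-type characterization of subgroup-representation convergence in Lemma \ref{lemma-convergent-characters} involves only the ambient topology (Lemma \ref{lem-convergence}) and the values of the characters on point-convergent sequences, both of which are preserved by $\exp$ and $\log$; hence the hypothesis $(R_n,\varphi_{f_n})\to(S,\varphi_f)$ in $\mathcal S(G)$ is equivalent to
\[(\r_n,(\epsilon\circ f_n)|_{\r_n})\longrightarrow (\s,(\epsilon\circ f)|_{\s})\quad\text{in $\mathcal S(\G)$.}\]

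With this abelian convergence in hand, I would apply Remark \ref{rem-extension} to $\G$ in place of $G$, with $\chi_n:=\epsilon\circ f_n$ and $\chi:=\epsilon\circ f$ in $\widehat\G$; these are indeed extensions to $\G$ of the restricted characters that appear above. The remark then produces, after passing to a further subsequence, characters $\mu_n\in \r_n^{\perp}\subseteq \widehat\G$ such that $(\epsilon\circ f_n)\cdot \mu_n\to \epsilon\circ f$ in $\widehat\G$.

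Finally, I would transfer the result back to $\G^*$ via the topological isomorphism $\Phi:\G^*\to\widehat\G$, $g\mapsto\epsilon\circ g$, of Definition \ref{def-dualizable}. Writing $\mu_n=\epsilon\circ g_n$ for the unique $g_n\in \G^*$, the condition $\mu_n|_{\r_n}\equiv 1$ gives $g_n(\r_n)\subseteq\ker\epsilon$. Since $g_n|_{\r_n}$ is automatically $\Lambda_k$-linear by Remark \ref{remark-f-in-Hom(G,W)-is-Lambda_k-linear}, its image is a $\Lambda_k$-submodule of $\m$ lying inside $\ker\epsilon$, and condition (a) of Definition \ref{def-dualizable} then forces $g_n(\r_n)=\{0\}$, i.e.\ $g_n\in \r_n^\perp$ in the sense used in the statement. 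The convergence $\epsilon\circ(f_n+g_n)=(\epsilon\circ f_n)\cdot\mu_n\to \epsilon\circ f$ in $\widehat\G$ together with the continuity of $\Phi^{-1}$ finally yields $f_n+g_n\to f$ in $\G^*$, which is what we want.

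The main delicate step is the first reduction to $\mathcal S(\G)$, since $\exp$ is only a homeomorphism and the subgroups $\r_n\subseteq \G$ and $R_n\subseteq G$ typically carry different (non-isomorphic) group structures; the identity $\varphi_{f_n}\circ\exp=\epsilon\circ f_n|_{\r_n}$ is exactly what lets one bypass this, because Lemma \ref{lemma-convergent-characters} records convergence only through pointwise character values and ambient-space convergence.
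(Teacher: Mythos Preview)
Your argument is correct and follows essentially the same route as the paper's proof: transfer the convergence from $\mathcal S(G)$ to $\mathcal S(\G)$ via $\log$, apply Remark~\ref{rem-extension} in the abelian group $\G$, and pull back through the isomorphism $\Phi:\G^*\to\widehat{\G}$. Your write-up actually spells out two points the paper leaves implicit, namely the use of Lemma~\ref{lemma-convergent-characters} to justify the transfer and the use of condition~(a) in Definition~\ref{def-dualizable} to identify the Pontrjagin annihilator $\r_n^\perp\subseteq\widehat{\G}$ with the $\G^*$-annihilator.
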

\begin{proof}
By passing from $G$ to $\G$ via $\log:G\to\G$, we get from our assumption that 
$$(\r_n, \epsilon\circ f_n|_{\r_n})\to (\s, \epsilon\circ f|_{\s})$$
in $\mathcal S(\G)$, regarding $\G$ as an abelian locally compact group. By the remark, and using the 
isomorphism  $\G^*\cong \widehat{\G}; g\mapsto \epsilon\circ g$, we can find,  after passing to a subsequence if necessary, elements $g_n\in \r_n^{\perp}$ such that 
$$\epsilon\circ (f_n+g_n)=(\epsilon \circ f_n)\cdot (\epsilon \circ g_n)\to \epsilon\circ f$$
in $\widehat{\G}$. But then we also have $f_n+g_n\to f$ in $\G^*$.
\end{proof}

We  can now prove:

\begin{proposition}\label{prop-open}
Suppose that $(G,\G)$ is a regular $(\m,\epsilon)$-dualizable $k$-Lie-pair and assume that 
$(f_n)_{n\in \NN}$ is a sequence in $\G^*$ such that $\kappa(f_n)\to \kappa(f)$ in $\Prim(C^*(G))$ for some $f\in \G^*$. Then, after passing to a subsequence, there exists a sequence
$x_n$ in $G$ such that $\Ad^*(x_n)f_n\to f$ in $\G^*$. 
\end{proposition}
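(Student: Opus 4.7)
The strategy is to combine Lemma \ref{lem-open} and Lemma \ref{lem-extension}, and then to use the quasi-orbit identity $f+\r^\perp=\mathcal O^R(f)$ provided by Remark \ref{rem-standard} to absorb the remaining ``annihilator error'' into a coadjoint translate. The main obstacle is precisely this last step: Lemma \ref{lem-extension} alone gives convergence only up to an element of some annihilator, and it is the ``standard'' nature of the polarization $\r_n$, together with $\Ad$-equivariance, that lets us rewrite any such annihilator translate as an approximate coadjoint translate.

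First, since $\kappa(f_n)\to\kappa(f)$ in $\Prim(C^*(G))$, Lemma \ref{lem-open} yields, after passing to a subsequence, elements $x_n\in G$ and a closed subgroup $S=\exp(\s)$ of $G$ with $\s\subseteq\G$ subordinate to $f$, such that
$$(x_nR_nx_n^{-1},\,\varphi_{\Ad^*(x_n)f_n})\;\longrightarrow\;(S,\,\varphi_f)\quad\text{in }\mathcal S(G).$$
Observe that $\Ad(x_n)\r_n$ is a standard polarizing subalgebra for $\Ad^*(x_n)f_n$, with $\exp(\Ad(x_n)\r_n)=x_nR_nx_n^{-1}=:R_n'$, because the recursive construction in Remark \ref{rem-standard} is $\Ad^*$-equivariant. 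Applying Lemma \ref{lem-extension} to the sequence $(\Ad^*(x_n)f_n)$ with subordinate algebras $\Ad(x_n)\r_n$ and limiting pair $(\s,f)$, we obtain -- after a further subsequence -- elements $g_n\in(\Ad(x_n)\r_n)^\perp$ with
$$\Ad^*(x_n)f_n+g_n\;\longrightarrow\;f\quad\text{in }\G^*.$$

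To eliminate the correction term $g_n$, we invoke Remark \ref{rem-standard}: since $\Ad(x_n)\r_n$ is a standard polarization for $\Ad^*(x_n)f_n$, Lemma \ref{lem-polarization}(3) gives
$$\Ad^*(x_n)f_n+(\Ad(x_n)\r_n)^\perp \;=\; \mathcal O^{R_n'}(\Ad^*(x_n)f_n)\;\subseteq\;\overline{\Ad^*(R_n')\Ad^*(x_n)f_n}.$$
As $\G^*$ is second countable locally compact, hence metrizable, we may fix a metric $d$ on $\G^*$ and, for each $n$, choose $r_n\in R_n'$ with $d\bigl(\Ad^*(r_n)\Ad^*(x_n)f_n,\;\Ad^*(x_n)f_n+g_n\bigr)<1/n$. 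Setting $y_n:=r_nx_n$ we have $\Ad^*(y_n)f_n=\Ad^*(r_n)\Ad^*(x_n)f_n$, and the triangle inequality yields $\Ad^*(y_n)f_n\to f$ in $\G^*$, which is the desired conclusion.
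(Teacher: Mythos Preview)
Your proof is correct and follows essentially the same route as the paper's: apply Lemma \ref{lem-open}, then Lemma \ref{lem-extension}, then use the quasi-orbit identity $f+\r^\perp=\mathcal O^R(f)$ from Remark \ref{rem-standard} together with metrizability of $\G^*$ to replace the annihilator correction by a genuine coadjoint translate. The only cosmetic difference is that the paper immediately absorbs the conjugating elements into the $f_n$ (renaming $\Ad^*(x_n)f_n$ as the new $f_n$), whereas you track them explicitly throughout; both are fine.
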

\begin{proof}
It follows from Lemma \ref{lem-open} that, after passing to a subsequence,  and after  passing  from $f_n$ to 
$\Ad^*(y_n)f_n$ for some suitable $y_n\in G$, we may assume that there is a choice of standard regularizations $\r_n$ for $f_n$ and a subalgebra $\s$ subordinate to $f$ such that 
$$(R_n,\varphi_{f_n})\to (S,\varphi_f)$$
in $\mathcal S(G)$. By the above lemma, we can find, after passing to another subsequence, elements  $g_n\in \r_n^\perp$ such that $f_n+g_n\to f$ in $\G^*$. By Proposition \ref{prop-standard-polarization} we know that $\Ad^*(R_n)f_n$ is dense in $f_n+\r_n^{\perp}$
for all $n\in \NN$. Thus, we may approximate $f_n+g_n$ by  $\Ad^*(x_n)f_n$ for a suitable $x_n\in R_n$ to 
obtain $\Ad^*(x_n)f_n\to f$ in $\G^*$.
\end{proof}

As a consequence we now get the main result of this paper

\begin{theorem}\label{thm-open}
Suppose that $(G,\G)$ is a regular $(\m,\epsilon)$-dualizable $k$-Lie-pair. Then the Kirillov-orbit map
$\tilde{\kappa}:\G^*/\!\sim\to \Prim(C^*(G))$ is a homeomorphism.
\end{theorem}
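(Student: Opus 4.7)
The plan is to combine Proposition~\ref{prop-open} with the facts already established. Continuity of $\tilde\kappa$ is Corollary~\ref{tilde-kappa-is-well-defined}, and surjectivity follows from Proposition~\ref{prop-standard-polarization}, so the only remaining tasks are injectivity and continuity of the inverse map.

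For injectivity, I would suppose $f,f'\in\G^*$ satisfy $\kappa(f)=\kappa(f')$ and apply Proposition~\ref{prop-open} to the \emph{constant} sequence $f_n\equiv f'$: the hypothesis $\kappa(f_n)\to\kappa(f)$ is automatic, so after passing to a subsequence one obtains $x_n\in G$ with $\Ad^*(x_n)f'\to f$ in $\G^*$, which shows $f\in\overline{\Ad^*(G)f'}$. By symmetry $f'\in\overline{\Ad^*(G)f}$, hence $f\sim f'$, and $\tilde\kappa$ is injective.

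For continuity of $\tilde\kappa^{-1}$, note that since $C^*(G)$ is separable, $\Prim(C^*(G))$ is second countable, so it suffices to verify sequential continuity. Let $[f_n]$ be a sequence in $\G^*/\!\sim$ with $\tilde\kappa([f_n])\to\tilde\kappa([f])$ in $\Prim(C^*(G))$; the claim is $[f_n]\to[f]$ in $\G^*/\!\sim$. I would argue by contradiction: if this fails, there exist an open neighborhood $U$ of $[f]$ and a subsequence $([f_{n_k}])$ with $[f_{n_k}]\notin U$ for all $k$. Since $\kappa(f_{n_k})=\tilde\kappa([f_{n_k}])\to\kappa(f)$, Proposition~\ref{prop-open} provides a further subsequence and elements $x_{n_{k_j}}\in G$ with $\Ad^*(x_{n_{k_j}})f_{n_{k_j}}\to f$ in $\G^*$. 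Applying the continuous quotient map $q\colon\G^*\to\G^*/\!\sim$, and using that $[\Ad^*(x_{n_{k_j}})f_{n_{k_j}}]=[f_{n_{k_j}}]$, yields $[f_{n_{k_j}}]\to[f]$ in $\G^*/\!\sim$; so eventually $[f_{n_{k_j}}]\in U$, contradicting the choice of $([f_{n_k}])$.

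The serious work has already been done in Proposition~\ref{prop-open}, whose proof invokes the regularity hypothesis through Lemma~\ref{inducing-pair-lemma} (used inside Lemma~\ref{lem-open}) together with the extension Lemma~\ref{lem-extension}; this is where I expect all the substantive difficulty to reside. The present theorem is essentially a bookkeeping consequence of that proposition: injectivity corresponds to the constant-sequence case, and continuity of the inverse corresponds to pushing the conclusion of Proposition~\ref{prop-open} through the continuous quotient map together with the standard subsequence-of-subsequence argument made available by second countability of $\Prim(C^*(G))$.
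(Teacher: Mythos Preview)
Your proposal is correct and follows essentially the same approach as the paper: both derive injectivity from Proposition~\ref{prop-open} applied to a constant sequence, and both obtain openness (equivalently, continuity of the inverse) as an immediate consequence of Proposition~\ref{prop-open}. The paper simply asserts that openness follows directly, whereas you spell out the standard subsequence argument via second countability; this is a presentational difference, not a mathematical one.
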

\begin{proof}
Corollary \ref{tilde-kappa-is-well-defined} shows that the Kirillov-orbit map is continuous and surjective and 
the above proposition directly implies that it is open. So the result follows if we can check that it is injective. 
For this suppose that $f,f'\in \G^*$ such that $\kappa(f)=\kappa(f')$. Then the above proposition, applied to 
the constant sequence $f_n=f$ implies that there exists a sequence $(x_n)_{n\in \NN}$ in $G$ such that
$\Ad^*(x_n)f_n\to f'$. Similarly, we can also find a sequence $(y_n)_{n\in \NN}$ in $G$ such that 
$\Ad^*(y_n)f'\to f$. Thus $f$ and $f'$ lie in the same quasi-orbit in $\G^*$.
\end{proof}

\section{GCR and CCR representations}\label{sec-GCR}

Recall that an irreducible representation $\pi\in \widehat{A}$ is called a {\em GCR-representation}
(resp. {\em CCR-representation}), if $\pi(A)$ contains (resp. is equal to) the compact operators 
$\mathcal K(H_\pi)$. Note that this implies that every irreducibe representation $\rho\in \widehat{A}$ with $\rho\sim \pi$ 
must already be unitarily equivalent to $\pi$.

We say that $A$ is GCR (resp. CCR) if every irreducibe representation of $A$ is GCR (resp. CCR).
If $A$ is separable, it follows from Glimm's famous theorem (see \cite[Chapter 12]{Dix}) that $A$ is GCR if and only if $A$ is of type I and 
a representation $\pi\in \widehat{A}$ is GCR (resp. CCR) if and only if $\{\pi\}$ is locally closed (resp. closed) in $\widehat{A}$. (Recall that  a subset $Y$ of a topological space $X$ is called {\em locally closed} if 
$Y$ is open in its closure $\overline{Y}$.)

A locally compact group $G$ is called GCR (resp CCR, resp type I) if the group $C^*$-algebra $C^*(G)$ is 
GCR (resp. CCR, resp type I), and similarly for representations.
In what follows, we prove the following theorem: 

\begin{theorem}\label{theorem-GCR}
Suppose that $(G,\G)$ is a regular $(\m,\epsilon)$-dualizable nilpotent $k$-Lie pair. 
Let $f\in \G^*$,  let $\r\subseteq \G$ be a standard polarizing subalgebra for $f$ and let $R=\exp(\r)$.
Then the following are true:
\begin{enumerate}
\item If $f\in \G^*$ such $\Ad^*(G)f$ is locally closed (resp. closed) in $\G^*$, then $\ind_R^G\varphi_f$ is GCR (resp. CCR).
\item If, in addition, $\Ad^*(R)f=f+\r^{\perp}$, then $\ind_R^G\varphi_f$ is GCR (resp. CCR) if and only if 
$\Ad^*(G)f$ is locally closed (resp. closed).
\end{enumerate}
\end{theorem}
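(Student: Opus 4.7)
The proof relies on the homeomorphism $\tilde{\kappa}\colon \G^*/\!\sim\,\to\Prim(C^*(G))$ established in Theorem~\ref{thm-open} together with Glimm's characterization recalled at the start of this section: $\pi_f$ is GCR (resp.\ CCR) iff $\{\pi_f\}$ is locally closed (resp.\ closed) in $\widehat{G}$. A preliminary observation ties things together: for any $f\in \G^*$, the singleton $\{[f]\}$ is locally closed (resp.\ closed) in $\G^*/\!\sim$ iff the quasi-orbit $[f]$ is locally closed (resp.\ closed) in $\G^*$. Indeed, the closure of $\{[f]\}$ pulls back along $q\colon \G^*\to \G^*/\!\sim$ to $\overline{\Ad^*(G)f}$, and any $G$-invariant open set $V\subseteq \G^*$ is automatically $\sim$-saturated: if $f'\sim f''$ with $f''\in V$, then $f'\in\overline{\Ad^*(G)f''}$ forces $V\cap \Ad^*(G)f''\neq\emptyset$, hence $\Ad^*(G)f'\subseteq V$.

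For the forward direction of (1), assume $\Ad^*(G)f$ is locally closed (resp.\ closed). A standard argument shows that the coadjoint quasi-orbit equals the orbit: if $f'\sim f$ with $\Ad^*(G)f'\neq \Ad^*(G)f$, then $\Ad^*(G)f'\subseteq \overline{\Ad^*(G)f}\setminus \Ad^*(G)f$, which is closed, contradicting $f\in\overline{\Ad^*(G)f'}$. Hence $[f]=\Ad^*(G)f$ is locally closed (resp.\ closed) in $\G^*$, and via $\tilde\kappa$ the point $\{P_f\}$ is locally closed (resp.\ closed) in $\Prim(C^*(G))$.

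To upgrade this to ``$\pi_f$ is GCR (resp.\ CCR)'' the plan is induction on the nilpotence length $l$ of $G$. The base $l=1$ is trivial since $\pi_f$ is a character. For $l\ge 2$, reduce via Lemma~\ref{largest-ideal-in-kernel-of-character} to the case where $f$ is faithful on $\z(\G)$. By the recursive construction of a standard polarization, one has $\r\subseteq \n$, where $\n$ is the centralizer of a maximal abelian $\a\subseteq \z_2(\G)$, and $\pi_f\cong \ind_N^G\rho$ with $\rho=\ind_R^N\varphi_{f|_\n}$. The group $N=\exp(\n)$ is normal in $G$ and has strictly smaller nilpotence length. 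After checking that local closedness of $\Ad^*(G)f$ in $\G^*$ descends to local closedness of $\Ad^*(N)(f|_\n)$ in $\n^*$ (using that the restriction $\G^*\to \n^*$ is an open quotient by the $G$-invariant subspace $\n^\perp$), the inductive hypothesis gives $\rho$ GCR (resp.\ CCR). A Mackey-machine argument along the lines of Lemma~\ref{lemma2}, exploiting the nondegenerate bicharacter $\Phi_f\colon G/N\times A/Z\to \TT$ of Remark~\ref{remark-nondegenerate} together with the regularity hypothesis (through Lemma~\ref{inducing-pair-lemma}), then promotes the GCR property from $\rho$ to $\ind_N^G\rho=\pi_f$: the compact ideal $\K(H_\rho)\subseteq \rho(C^*(N))$ induces to a compact ideal inside $\pi_f(C^*(G))$. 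In the closed-orbit case, additionally $P_f$ is a maximal primitive, so $C^*(G)/P_f$ is simple; combined with GCR, this forces $C^*(G)/P_f\cong \K(H_{\pi_f})$, yielding CCR. This induction-plus-Mackey lifting step is the main technical obstacle.

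For the converse direction in (2), assume $\pi_f$ is GCR (resp.\ CCR). Glimm gives $\{\pi_f\}$ locally closed (resp.\ closed) in $\widehat G$; since the surjection $\widehat{G}\to \Prim(C^*(G))$ is continuous and sends basic open sets to basic open sets, $\{P_f\}$ is locally closed (resp.\ closed) in $\Prim$, and hence by $\tilde\kappa$ and the preliminary observation, $[f]$ is locally closed (resp.\ closed) in $\G^*$. The additional hypothesis $\Ad^*(R)f=f+\r^{\perp}$ is used to force $\Ad^*(G)f=[f]$: by Lemma~\ref{lem-polarization}(3), $f+\r^{\perp}$ is precisely the $R$-quasi-orbit $\mathcal O^R(f)$, and the hypothesis makes it coincide with the $R$-orbit; this rules out points in the $G$-quasi-orbit lying outside the $G$-orbit, giving $[f]=\Ad^*(G)f$. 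Local closedness (resp.\ closedness) thus transfers from $[f]$ to $\Ad^*(G)f$, completing the proof.
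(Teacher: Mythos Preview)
Your forward direction (1) follows essentially the same inductive strategy as the paper, but you hand-wave precisely the step where the real work lies. The paper's Lemma~\ref{lem-GCR} says that $\ind_N^G\rho$ is GCR (resp.\ CCR) iff \emph{two} conditions hold: $\rho$ is GCR, \emph{and} the $G$-orbit $G\cdot\rho$ is locally closed (resp.\ closed) in $\widehat{N}$; moreover the lemma requires the stabilizer of $\ker\rho$ in $G$ to be exactly $N$. You do not verify the stabilizer condition (the paper uses the nondegeneracy of $\Phi_\psi$ on $G/N\times A/Z$ for this), and your sentence ``the compact ideal $\K(H_\rho)\subseteq \rho(C^*(N))$ induces to a compact ideal inside $\pi_f(C^*(G))$'' is not an argument---without the locally-closed-orbit condition this fails. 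Relatedly, what you need to pass down is local closedness of $\Ad^*(G)f|_\n$ (to get $G\cdot\rho$ locally closed via $\tilde\kappa$ for $N$), not just of $\Ad^*(N)(f|_\n)$; the paper obtains both by first invoking Lemma~\ref{lem-R-orbit} to get $\Ad^*(R)f=f+\r^\perp$ and hence $\Ad^*(G)f=\Ad^*(G)f+\n^\perp$, so that restriction to $\n^*$ behaves well.

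Your converse in (2) has a genuine gap. From $\pi_f$ GCR and the homeomorphism $\tilde\kappa$ you correctly deduce that the quasi-orbit $[f]$ is locally closed in $\G^*$. But the claim that the hypothesis $\Ad^*(R)f=f+\r^\perp$ ``rules out points in the $G$-quasi-orbit lying outside the $G$-orbit'' is unsupported: knowing that the $R$-orbit equals the $R$-quasi-orbit $f+\r^\perp$ says nothing, by itself, about whether the $G$-quasi-orbit is a single $G$-orbit. The paper does \emph{not} argue the converse directly through $\tilde\kappa$; it runs the same induction backwards via Lemma~\ref{lem-GCR}, obtaining that $\rho$ is GCR and $G\cdot\rho$ is locally closed in $\widehat{N}$, then uses the inductive hypothesis on $(N,\n)$ and the identity $\Ad^*(G)f=\Ad^*(G)f+\n^\perp$ (this is where $\Ad^*(R)f=f+\r^\perp$ actually enters) to pull local closedness of $\Ad^*(G)f|_\n$ back up to $\Ad^*(G)f$ in $\G^*$. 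You need to replace your direct argument by this inductive one, or else supply a valid proof that $[f]=\Ad^*(G)f$ under your stated hypotheses.
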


The condition that $\Ad^*(R)f=f+\r^{\perp}$ is certainly necessary for $\Ad^*(G)f$ being locally closed, as we shall see in Lemma \ref{lem-R-orbit} below. However, we do not know whether this condition is automatically true if $\ind_R^G\varphi_f$ is GCR. 

\begin{remark}\label{rem-locally-closed}
Since for a given GCR-representation $\pi$ of $C^*(G)$ any other irreducible  representation $\tau$ with the same kernel must already be equivalent to $\pi$, we see that if $\ind_R^G\varphi_f$ is 
GCR for some $f\in \G^*$ and some standard polarizing subalgebra $\r=\log(R)$, then {\em every representation} $\ind_{R'}^G\varphi_{f'}$ for any $f'\in \mathcal O(f)$ and any standard polarizing algebra 
$R'$ for $f'$ must be equivalent to $\ind_R^G\varphi_f$. In particular, if $(G,\G)$ is regular and 
$C^*(G)$ is GCR, then it follows from 
Theorem \ref{thm-open} that 
$$\widehat{\kappa}:\G^*/\!\sim\to \widehat{G}; \mathcal O(f)\mapsto \ind_R^G\varphi_f$$
is a well-defined homeomorphism. 

This observation can be specialized to locally closed subsets of $\widehat{G}$: If $E\subseteq \widehat{G}$ 
is locally closed, then $E$ is homeomorphic to $E':=\{\ker\pi: \pi\in E\}\subseteq \Prim(C^*(G))$ via the canonical map $\pi\mapsto \ker\pi$. Thus if $\G^*_E\subseteq \G^*$ denotes the inverse image of $E'$ 
under the Kirillov map, we obain a homeomorphism
$$\widehat{\kappa}_E:\G^*_E/\!\sim\to E; \mathcal O(f)\mapsto \ind_R^G\varphi_f.$$
\end{remark}

Since a locally closed orbit $\Ad^*(G)f$ coincides with the quasi-orbit 
$\mathcal O(f)$ of $f$, it follows from Theorem \ref{theorem-GCR} and the above remark that

\begin{corollary}\label{cor-GCR}
Suppose that $(G,\G)$  is a regular $(\m,\epsilon)$-dualizable nilpotent $k$-Lie pair such that all $\Ad^*(G)$-orbits are locally closed (resp. closed) in $\G^*$. Then 
$C^*(G)$ is GCR (resp. CCR) and the Kirrilov-orbit map
$$\widehat{\kappa}: \G^*/\Ad^*(G)\to \widehat{G}; f\mapsto \ind_R^G\varphi_f$$
is a homeomorphism.
\end{corollary}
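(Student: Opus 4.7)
The plan is to assemble the corollary directly from Theorem \ref{theorem-GCR}(1), Proposition \ref{prop-standard-polarization}, and Remark \ref{rem-locally-closed}, after first observing that under the hypothesis the quasi-orbit space collapses to the orbit space, so that $\G^*/\Ad^*(G)=\G^*/\!\sim$ and Theorem \ref{thm-open} directly applies.

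First I would check that $\G^*/\Ad^*(G)=\G^*/\!\sim$. Fix $f\in \G^*$; by hypothesis $\Ad^*(G)f$ is open in its closure $\overline{\Ad^*(G)f}$, so the complement $C:=\overline{\Ad^*(G)f}\setminus \Ad^*(G)f$ is closed and, being the complement of an $\Ad^*(G)$-orbit inside its (invariant) closure, is itself $\Ad^*(G)$-invariant. Suppose $f'\sim f$ with $f'\notin \Ad^*(G)f$; then $f'\in C$, hence $\overline{\Ad^*(G)f'}\subseteq C$, and since $f\in \overline{\Ad^*(G)f'}$ this would place $f\in C$, contradicting $f\in \Ad^*(G)f$. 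Therefore the quasi-orbit of $f$ equals $\Ad^*(G)f$.

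Next I would verify that $C^*(G)$ is GCR (resp. CCR). By Theorem \ref{theorem-GCR}(1), for every $f\in \G^*$ and every standard polarizing subalgebra $\r=\log(R)$ for $f$, the irreducible representation $\pi_f:=\ind_R^G\varphi_f$ is GCR (resp. CCR). By Proposition \ref{prop-standard-polarization}, every primitive ideal of $C^*(G)$ arises as $\ker \pi_f$ for some such pair. Glimm's theorem (\cite[Chapter 12]{Dix}) characterises GCR representations as those determined up to unitary equivalence by their kernels. Hence for any $\pi\in \widehat{G}$, writing $\ker\pi=\ker\pi_f$ for an appropriate $f$ and $\r$, we obtain $\pi\cong \pi_f$, so $\pi$ is GCR (resp. CCR). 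It follows that $C^*(G)$ itself is GCR (resp. CCR).

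Finally, Remark \ref{rem-locally-closed} (which rests on Theorem \ref{thm-open} together with the just-established GCR property and the regularity hypothesis) produces a well-defined homeomorphism $\widehat{\kappa}:\G^*/\!\sim\;\to\widehat{G}$, $\mathcal{O}(f)\mapsto \pi_f$. Composing with the identification $\G^*/\Ad^*(G)=\G^*/\!\sim$ from the first step yields the homeomorphism $\widehat{\kappa}:\G^*/\Ad^*(G)\to \widehat{G}$, $\Ad^*(G)f\mapsto \pi_f$, as claimed. There is no real obstacle here beyond careful bookkeeping: all genuine content — regularity of the Kirillov-orbit map, GCR-ness of the individual $\pi_f$'s, and Glimm's criterion — has already been imported.
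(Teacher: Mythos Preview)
Your proposal is correct and follows essentially the same route as the paper: the paper's one-line justification (``Since a locally closed orbit $\Ad^*(G)f$ coincides with the quasi-orbit $\mathcal O(f)$ of $f$, it follows from Theorem \ref{theorem-GCR} and the above remark'') is exactly the combination of your three steps---identifying orbits with quasi-orbits, invoking Theorem \ref{theorem-GCR}(1), and applying Remark \ref{rem-locally-closed}---with your version spelling out the details (notably the use of Proposition \ref{prop-standard-polarization} and Glimm's criterion to pass from ``each $\pi_f$ is GCR'' to ``$C^*(G)$ is GCR'') that the paper leaves implicit.
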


For the proof of Theorem \ref{theorem-GCR} we need the following two lemmas:

\begin{lemma}\label{lem-R-orbit}
Suppose that $(G,\G)$ is an  $(\m,\epsilon)$-dualizable nilpotent $k$-Lie pair and let $f\in \G^*$ such that 
$\Ad^*(G)f$ is locally closed in $\G^*$. Then $\Ad^*(R)f=f+\r^\perp$.
\end{lemma}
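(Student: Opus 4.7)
My plan is to deduce the statement from three ingredients already available in the excerpt: the fact that $\Ad^*(R)f \subseteq f+\r^\perp$, the fact that this inclusion is dense, and the characterization of $R$ provided by Remark \ref{rem-standard}(3).

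First I would verify that $\Ad^*(R)f \subseteq f+\r^\perp$. For $r=\exp(X)\in R$ and $Y\in\r$, formula \eqref{eq-Ad} gives
$$(\Ad^*(r)f-f)(Y) = f\!\left(-[X,Y]+\sum_{n=2}^l \tfrac{(-1)^n}{n!}\ad(X)^n(Y)\right),$$
and every term on the right lies in $[\r,\r]$ because $\r$ is a subalgebra; since $\r$ is $f$-subordinate, this vanishes. Next, Remark \ref{rem-standard}(3) (applied to the standard polarization $\r$) asserts that $f+\r^\perp=\mathcal O^R(f)\subseteq \overline{\Ad^*(R)f}$. Combining this with the first inclusion and the fact that $f+\r^\perp$ is closed in $\G^*$, I conclude
$$\overline{\Ad^*(R)f} \;=\; f+\r^\perp.$$

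The heart of the argument is then to promote density to equality using local closedness of $\Ad^*(G)f$. Let $g\in f+\r^\perp$. By the displayed equality there is a sequence $(r_n)$ in $R$ with $\Ad^*(r_n)f\to g$; in particular $g\in \overline{\Ad^*(G)f}$. Assume, for contradiction, that $g\notin\Ad^*(G)f$. Since $\Ad^*(G)f$ is locally closed, i.e.\ open in its closure, the complement $\overline{\Ad^*(G)f}\setminus\Ad^*(G)f$ is closed in $\overline{\Ad^*(G)f}$, so $g$ has a neighborhood $V$ in $\overline{\Ad^*(G)f}$ disjoint from $\Ad^*(G)f$. But $\Ad^*(r_n)f\in\Ad^*(G)f$ for all $n$ and eventually lies in $V$, a contradiction.

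Hence $g\in\Ad^*(G)f$, say $g=\Ad^*(x)f$ for some $x\in G$. Because $g-f\in\r^\perp$, Remark \ref{rem-standard}(3) forces $x\in R$, so $g\in\Ad^*(R)f$. This yields the reverse inclusion $f+\r^\perp\subseteq\Ad^*(R)f$ and completes the proof. The main (and really only) conceptual step is the local-closedness argument bridging $\overline{\Ad^*(R)f}$ and $\Ad^*(G)f$; everything else is a direct invocation of the structural facts about standard polarizations already established in Remark \ref{rem-standard}.
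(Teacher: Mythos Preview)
Your argument contains a logical slip at the crucial step. You write: ``the complement $\overline{\Ad^*(G)f}\setminus\Ad^*(G)f$ is closed in $\overline{\Ad^*(G)f}$, so $g$ has a neighborhood $V$ in $\overline{\Ad^*(G)f}$ disjoint from $\Ad^*(G)f$.'' This is a non sequitur: membership in a \emph{closed} set never produces such a neighborhood. Points in $\overline{\Ad^*(G)f}\setminus\Ad^*(G)f$ can perfectly well be limits of sequences from $\Ad^*(G)f$ --- that is exactly what boundary points are. So from $\Ad^*(r_n)f\to g$ and local closedness of $\Ad^*(G)f$ alone you cannot conclude $g\in\Ad^*(G)f$.

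Your approach is salvageable, however, if you exploit the full quasi-orbit symmetry from Remark~\ref{rem-standard}. Since $f+\r^\perp=\mathcal O^R(f)$, you have not only $g\in\overline{\Ad^*(R)f}$ but also $f\in\overline{\Ad^*(R)g}\subseteq\overline{\Ad^*(G)g}$. Now argue by contradiction: if $g\notin\Ad^*(G)f$, then the orbit $\Ad^*(G)g$ is disjoint from $\Ad^*(G)f$ yet contained in $\overline{\Ad^*(G)f}$; hence $\Ad^*(G)g$ lies in the closed set $\overline{\Ad^*(G)f}\setminus\Ad^*(G)f$, and therefore so does $\overline{\Ad^*(G)g}$. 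But $f\in\overline{\Ad^*(G)g}$, contradicting $f\in\Ad^*(G)f$.

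The paper takes a somewhat different route: it uses that for a locally closed orbit the map $G/G_f\to\Ad^*(G)f$ is a homeomorphism (second countability plus \cite{Rief}), observes $G_f\subseteq R$ via Remark~\ref{rem-standard}, and concludes that $\Ad^*(R)f$ (the image of the closed set $R/G_f$) is closed in $\Ad^*(G)f$, hence locally closed in $\G^*$; then the $R$-orbit equals the $R$-quasi-orbit $f+\r^\perp$. Your corrected argument avoids invoking the homeomorphism result and is in that sense more self-contained, but the paper's version makes the underlying geometry (closedness of $R/G_f$ in $G/G_f$) more transparent.
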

\begin{proof}
Let $G_f=\{x\in G: \Ad^*(x)f=f\}$ denote the stabilizer of $f$. Since $\Ad^*(G)f$ is locally closed, hence locally compact Hausdorff, and since everything in sight is second countable, it follows that $G/G_f$ is homeomorphic to  $\Ad^*(G)f$ via $xG_f\mapsto \Ad^*(x)f$ (e.g. use \cite[Proposition 7.1]{Rief}). Since 
$\Ad^*(x)f=f\in f+\r^{\perp}$ for all $x\in G_f$, it follows that $x\in R$ (see Remark \ref{rem-standard}). Hence, $G_f\subseteq R$ and $R/G_f$ is closed in $G/G_f$, which implies 
$\Ad^*(R)f$ is closed in $\Ad^*(G)f$, hence locally closed in $\G^*$.
 It thus follows that the the $\Ad^*(R)$-quasi-orbit $\mathcal O^R(f)$ coincides with the orbit
$\Ad^*(R)f$ and it follows then from Remark \ref{rem-standard} that $\Ad^*(R)f=f+\r^{\perp}$.
\end{proof}

\begin{lemma}\label{lem-GCR}
Let $N$ be a closed normal subgroup of the second countable locally compact group $G$ such that $G/N$ is abelian. Let $\rho\in \widehat{N}$ such the stabilizer $G_P$  of $P=\ker\rho$ for the action of $G$ on $\Prim(N)$ is equal to $N$. Then the following are equivalent:
\begin{enumerate}
\item $\pi$ is GCR (resp. CCR).
\item $\rho$ is GCR and the orbit $G\cdot\rho=\{x\cdot \rho: x\in G\}$ is locally closed (resp. closed) in 
$\widehat{N}$.
\end{enumerate}
\end{lemma}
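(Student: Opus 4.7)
The plan is to apply Mackey's normal subgroup analysis combined with the Effros--Glimm dichotomy for orbits of Polish group actions. First, observe that $G_\rho \subseteq G_P = N$, while $N \subseteq G_\rho$ trivially, so $G_\rho = N$; hence the orbit map $G/N \to G\cdot\rho,\; xN \mapsto x\cdot\rho$ is a continuous bijection. Since $G/N$ is amenable, Mackey's theorem (or the argument of Lemma \ref{lemma2}) yields that $\pi := \ind_N^G \rho$ is irreducible, and I would realise it on $H_\pi \cong L^2(G/N, H_\rho)$ together with the imprimitivity pair $(\pi, P^\rho)$.

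For the direction (2) $\Rightarrow$ (1), suppose $G\cdot\rho$ is locally closed and $\rho$ is GCR. Local closedness plus second countability promotes $G/N \to G\cdot\rho$ to a homeomorphism (Rieffel \cite[Proposition 7.1]{Rief}). By Mackey--Green imprimitivity, the C*-algebra generated by $\pi(C^*(G)) \cup P^\rho(C_0(G/N))$ is Morita equivalent to $\rho(C^*(N))$, so since $\rho$ is GCR, it contains $\K(H_\pi)$. The key additional step is to show that $P^\rho(C_0(G/N))$ can be absorbed into (the norm closure of) $\pi(C^*(G))$: under the smoothness hypothesis the dual $\widehat{G/N}$ separates points of $G \cdot \rho$ via conjugation, so suitable operators of the form $\pi(f)$ (averaging over $G/N$ against characters that vanish off small neighbourhoods of a given coset) approximate multiplication by $\varphi \in C_c(G/N)$ on $L^2(G/N, H_\rho)$. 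Combining this with a rank-one projection $\rho(a) \in \K(H_\rho)$ produces a rank-one projection in $\pi(C^*(G))$, giving $\pi$ GCR. Closedness of the orbit allows a global (compact) choice of $\varphi$, which gives $\pi(C^*(G)) = \K(H_\pi)$, i.e.\ CCR.

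For (1) $\Rightarrow$ (2), assume $\pi$ is GCR. Via Fell's continuity of induction, the Mackey map $\Xi: \widehat{N}/G \to \widehat{G}$, $[\rho'] \mapsto \ind_N^G\rho'$ (defined on orbits with stabilizer $N$), is continuous and injective near $[\rho]$. The hypothesis that $\{\pi\}$ is locally closed in $\widehat{G}$ pulls back to $\{[\rho]\}$ being locally closed in the orbit space $\widehat{N}/G$, and the Effros--Glimm dichotomy then forces $G \cdot \rho$ to be locally closed in $\widehat{N}$. Next, the restriction $\pi|_N$ disintegrates as $\int^\oplus_{G/N}(x\cdot\rho)\, d(xN)$, and projection of the compact operators $\K(H_\pi) \subseteq \pi(C^*(G))$ onto the fibres shows that each $x\cdot\rho$, and in particular $\rho$ itself, must be GCR. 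The CCR case follows by replacing ``locally closed'' with ``closed'' throughout.

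The main obstacle will be the direction (1) $\Rightarrow$ (2), in particular verifying that local closedness of $\{\pi\}$ in $\widehat{G}$ actually descends to local closedness of the orbit in $\widehat{N}$; this is where the Effros--Glimm dichotomy for Polish group actions is essential, and where one must use both the continuity of induction and the fact that the Mackey cocycle is trivial because $G_P = N$.
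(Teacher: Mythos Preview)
Your outline is broadly on the right track, but the paper takes a much shorter route: it simply observes that Green's theory gives a twisted crossed-product decomposition $C^*(G)\cong C^*(N)\rtimes_{\alpha,\tau}G$ compatible with induction and restriction, and then invokes \cite[Lemma~3.2.2]{Ech} verbatim. What you are sketching is essentially an unpacking of that cited lemma, carried out by hand via Mackey imprimitivity and Effros--Glimm. That is a legitimate alternative, and it has the advantage of being self-contained, but several of your steps are genuinely incomplete as written.

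The most serious gap is the ``absorption'' step in $(2)\Rightarrow(1)$. You assert that operators of the form $\pi(f)$ approximate multiplication by $\varphi\in C_c(G/N)$ because ``$\widehat{G/N}$ separates points of $G\cdot\rho$ via conjugation''. This is not an argument: what you actually need is that the spectral projections of $\pi|_N$ associated to small $G$-invariant neighbourhoods in $\widehat{N}$ (equivalently, in $\Prim(N)$, using $G_P=N$) are already in the norm closure of $\pi(C^*(G))$, and that under the orbit homeomorphism $G/N\cong G\cdot\rho$ these projections become multiplication by characteristic functions of small sets in $G/N$. This is exactly where the hypothesis $G_P=N$ (not merely $G_\rho=N$) and local closedness of the orbit in $\Prim(N)$ do the work, and it is the technical heart of \cite[Lemma~3.2.2]{Ech}. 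Your sentence about the CCR case (``a global compact choice of $\varphi$'') is also too vague: one must separately argue that $\pi(C^*(G))\subseteq\K(H_\pi)$, which uses closedness of the orbit to factor $\pi$ through a CCR quotient.

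For $(1)\Rightarrow(2)$ your strategy via the Mackey map and Effros--Glimm is reasonable, but note that $\widehat{N}$ need not be $T_0$, so you should run the argument in $\Prim(N)$ (which is second countable and $T_0$, hence the quasi-orbit space is analytic) rather than in $\widehat{N}$; the hypothesis $G_P=N$ is phrased precisely so that this works. Once $G\cdot P$ is locally closed in $\Prim(N)$ and $\rho$ is GCR (which you deduce correctly from the disintegration of $\pi|_N$), the corresponding statement for $G\cdot\rho$ in $\widehat{N}$ follows.
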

\begin{proof}
By Green's theory (e.g. see \cite[Chapter 1]{Ech}) there exists a twisted action $(\alpha,\tau)$ of $(G,N)$ on 
$C^*(N)$ such that $C^*(G)\cong C^*(N)\rtimes_{\alpha,\tau}G$ and such that induction and restriction for
the twisted crossed product $C^*(N)\rtimes_{\alpha,\tau}G$ is compatible with induction and restriction 
of representations in the group $G$  between any subgroups of $G$ which contain $N$. 
The lemma then translates into \cite[Lemma 3.2.2]{Ech}.
\end{proof}

\begin{proof}[Proof of Theorem \ref{theorem-GCR}]
Let $f\in \G^*$ and let $\r$ be a standard polarization of $f$ of degree $m=m(f,\r)$. We give the proof 
by induction on the degree $m$.  If $m=0$ we have $R=G$ and assertions (1) and (2) of the theorem are
trivially true.

Assume now $m>0$. Let $\j\subseteq \G$ be the largest ideal in the kernel of $f$ and let $J=\exp{\j}$.
Then $f$ factors through a functional $\tilde{f}\in (\G/\j)^*$ and one one easily checks that all assertions are true for $(G,\G)$ and $f$ if and only if they are true for 
$(G/J,\G/\j)$ and $\tilde{f}$. Thus, by Lemma \ref{largest-ideal-in-kernel-of-character} we may  assume without loss of generality that $f$ is faithful on $\z=\z(\G)$.

Let $\a$ be a maximal abelian subalgebra of $\z^2(\G)$, let $\n$ be its centralizer in $\G$ and let 
$A=\exp(\a)$ and $N=\exp(\n)$.
It follows then from Lemma \ref{lem-polarization} and Remark \ref{rem-standard} that $\r$ is
a standard polarization for $f|_\n$ of degree $m-1$. Hence we may assume by induction that 
Theorem \ref{theorem-GCR} holds for $(N,\n)$ and $f|_\n$.

Let $\rho\in \ind_R^N\varphi_f$. We claim that the stabilizer for the action of $G$ on $P=\ker\rho$ is 
equal to $N$. Indeed, if $\psi:=\varphi_f|_A$, then it is shown in Proposition \ref{prop-standard-polarization} 
that $\rho|_A=\psi\cdot \id_{H_\rho}$ and we know 
from Remark \ref{remark-nondegenerate} that the bicharacter 
$\Phi_\psi: G/N\times A/Z\to \TT; (\dot{x},\dot {y})\mapsto \psi(xyx^{-1}y^{-1})$ is nondegenerate. 
In particular, the map $G/N\to \widehat{A/Z}$ which sends $\dot{x}$ to the character 
$\chi_x=\Phi_\psi(\dot x,\cdot)$ is injective.
Assume now that $x\in G$ such that $\ker(x\cdot \rho)=\ker\rho$. Then 
$x\cdot\psi\sim x\cdot\rho|_A\sim \rho|_A\sim \psi$, hence $x\cdot \psi=\psi$, which implies 
that $\chi_x(y)=\psi(y^{-1}x^{-1}yx)=\big(\overline{\psi}\cdot (x\cdot\psi)\big)(y)=1$ for all $y\in A$, from which it follows 
that $x\in N$.

Thus we may apply Lemma \ref{lem-GCR} to see that $\ind_R^G\varphi_f=\ind_N^G\rho$ is GCR (resp. CCR)
if and only if $\rho=\ind_R^N\varphi_f$ is GCR and the orbit $\{x\cdot\rho: x\in G\}$ is locally closed (resp. closed) in $\widehat{N}$.

Assume now that the orbit $\Ad^*(G)f$ is locally closed (resp. closed)  in $\G^*$. 
Let $G_f=\{x\in G: \Ad^*(x)f=f\}$ denote the stabilizer of $f$ in $G$. As observed in the proof of Lemma \ref{lem-R-orbit} we have 
$G/G_f\cong \Ad^*(G)f$ via $xG_f\mapsto \Ad^*(x)f$ and since $G_f\subseteq R\subseteq N$
it follows that $\Ad^*(N)f$ is also locally  closed in $\G^*$. Since $\r\subseteq \n$ it follows from Lemma \ref{lem-R-orbit} that
 $$f+\n^\perp\subseteq f+\r^{\perp}=\Ad^*(R)f\subseteq \Ad^*(N)f\subseteq \Ad^*(G)f.$$
  The same argument works if we replace $f$ by $\Ad^*(x)f$ and $R$ by $xRx^{-1}\subseteq N$ 
    for any $x\in G$, from which it follows that $\Ad^*(G)f=\Ad^*(G)f+\n^\perp$ and 
 $\Ad^*(N)f=\Ad^*(N)f+\n^\perp$. 
 Since $\n^*$ carries the quotient topology with respect to the projection
 $\res: \G\to \n; f\mapsto f|_\n$ (which becomes clear after identifying $\G^*$ with $\widehat{\G}$ and $\n^*$ with $\widehat{\n}$), we see that $\Ad^*(G)f|_\n$ and $\Ad^*(N)f|_\n$   are locally closed in $\n^*$
 (closed if $\Ad^*(G)f$ is closed in $\G^*$).
Since the theorem holds for $(N,\n)$,  this implies that $\rho:=\ind_R^N\varphi_f$ is GCR, and since $\kappa^{-1}(\{\ker(x\cdot\rho): x\in G\}=\Ad^*(G)f|_\n$ is locally closed in $\n^*$, 
 it follows from Theorem \ref{thm-open} combined with Remark \ref{rem-locally-closed} that the orbit 
 $G\cdot\rho$ is locally closed   in $\widehat{N}$ (closed if $\Ad^*(G)f$ is closed).  
 Thus, $\ind_R^G\varphi_f=\ind_N^G\rho$ is GCR by 
 Lemma \ref{lem-GCR} (and CCR if  $\Ad^*(G)f$ is closed).
 
 Assume now for the converse that $\pi=\ind_R^G\varphi_f$ is GCR and that $\Ad^*(R)f=f+\r^\perp$. 
 This property is certainly invariant under conjugation with elements in $x\in G$, so we have
 $$\Ad^*(xRx^{-1})\Ad^*(x)f=\Ad^*(x)f+\Ad(x)(\r)^\perp$$ for all $x\in G$. It follows as above that 
  $\Ad^*(G)f=\Ad^*(G)f+\n^\perp$.
  
Since  $\ind_R^G\varphi_f$ is GCR, it follows from Lemma \ref{lem-GCR}
 that $\rho=\ind_R^N\varphi_f$ is GCR and the orbit $G\cdot\rho$ is locally closed in $\widehat{N}$ (closed if 
 $\pi$ is CCR). Applying Theorem \ref{theorem-GCR} to $(N,\n)$ 
 this implies  that $\Ad^*(N)f|_\n$ is locally closed in $\n^*$.
 Since for all $x\in G$ the representation $x\cdot\rho\cong\ind_{xRx^{-1}}^N\varphi_{\Ad^*(x)f|_\n}$ is also GCR we see by induction  that  the $\Ad^*(N)$-orbits of $\Ad^*(x)f|_\n$ are locally closed for all $x\in G$ and hence coincide with the respective quasi-orbits $\mathcal O^N({\Ad^*(x)f|_\n})$. It follows then from Theorem \ref{thm-open} that 
 $$\Ad^*(G)f|_\n=\bigcup_{x\in G}\Ad^*(N)\Ad^*(x)f|_\n=\bigcup_{x\in G}\mathcal O^N({\Ad^*(x)f|_\n})=
 \kappa^{-1}(\{\ker(x\cdot\rho) : x\in G\})$$  is locally closed in $\n^*$ (closed if $\pi$ is CCR). 
But since  $\Ad^*(G)f=\Ad^*(G)f+\n^\perp$, this implies that $\Ad^*(G)f$ is locally closed in $\G^*$ 
(closed if $\pi$ is CCR).

\end{proof}

\section{Examples}\label{section-examples}

In this section we want to discuss some examples for our generalized Kirillov theory.
We start with the case of unipotent groups over $\RR$ or $\QQ_p$:

%

\begin{example}\label{example-Qp}
Let $K$ be a local field of characteristic zero (i.e., $K=\RR,\CC$ or a finite extension of $\Q_p$ for some 
prime $p$). Let $\G$ be any finite dimensional nilpotent Lie algebra over $K$ and let 
$G=\G$ with multiplication given by the Campbell-Hausdorff formula.
Then $\exp:\G\to G$ identifies with the mapping $X\mapsto x$. 
It follows that $(G,\G)$ is an $\infty$-Lie pair (or rather, a $\QQ$-Lie pair)   in the sense of Definition \ref{definition-k-Lie-pair}.
Note that every unipotent nilpotent Lie group over $K$ can be realized in this way and that in case $K=\RR$ this class coincides with the class of connected and simply connected nilpotent real Lie groups.

Since every finite dimensional Lie algebra over a finite extension of $K=\RR$ or $K=\QQ_p$ is also a finite dimensional Lie algebra over $K$, we assume from now on that $K=\RR$ or $K=\Q_p$.
Let $\epsilon:\RR\to \TT$ denote the basic character $\epsilon(t)=e^{2\pi i t}$ for $t\in\RR$ and 
\[
\epsilon: \Q_p \rightarrow \mathbb{T}, \; \sum_{j=m}^{\infty} c_j
\; p^j \mapsto \exp(2 \pi i \sum_{j=m}^{-1} c_j)
\]
for $x=\sum_{j=m}^\infty c_jp^j\in \Q_p$. If $V$ is a finite dimensional  vector space $V$ over $K$, then it follows from  \cite[Theorem 3]{Wei} that
 $V^*:=\Hom(V,K)\cong \widehat{V}$ via $f\mapsto \epsilon\circ f$. In particular, we obtain 
 $\G^*\cong\widehat{\G}$ via $f\mapsto \epsilon\circ f$. Since, $\QQ$ is dense in $K$, we 
 see that every every closed $\QQ$-subalgebra $\h$
 of $\G$ is also a $K$-subalgebra. By basic linear algebra, this  implies that every $f\in \h^*$ extends to a 
 functional $\tilde f\in \G^*$, so we see that $(G,\G)$ is $(K,\epsilon)$-dualizable in the sense of Definition \ref{def-dualizable}. Moreover, since the sum of two $K$-subalgebras $\h$ and $\n$ is a finite dimensional subspace of $\G$, it must be closed in $\G$. This shows that $(G,\G)$ is also regular in the sense of
Definition \ref{def-regular}. It is well known that the $\Ad^*(G)$-orbits in $\G^*$ are always closed, since they 
can be described as the set solutions of certain polynomial equations. 
Thus, as a consequence of Corollary \ref{cor-GCR} we see that $C^*(G)$ is CCR for all such $G$, and the
Kirillov-orbit map
$$\widehat{\kappa}:\G^*/\Ad^*(G)\to \widehat{G}; f\mapsto \ind_R^G\varphi_f$$
is a homeomorphism.
 Thus, we recover the original case of Kirillov \cite{Kir} (together with the main result of 
\cite{Bro}) in case of  real groups. Unipotent groups over $\Q_p$ have been considered by Moore in 
\cite{Moo} and (for the homeomorphism result) by Howe in \cite{How}.
 \end{example}

\begin{example}\label{quasi-p-groups}
Recall from \cite{How} that a locally compact group $G$ is called a 
{\em quasi-$p$ group} if every $x\in G$ generates a compact subgroup of $G$
which is a  projective limit of finite $p$-groups. For instance, every unipotent group over $\Q_p$,
as considered in the previous example, is a quasi-$p$ group in this sense.
Howe has shown in \cite[Theorem I]{How} that every nilpotent quasi-$p$ group 
is totally disconnected and 
has a unique $k$-Lie-algebra $\mathfrak g$
such that the exponential map $\exp:\mathfrak g\to G$ is bijective 
and satisfies the Campbell-Hausdorff formula,  if $G$ (and hence $\mathfrak g$)
is of nilpotence length $k<p$. Thus, in this case 
 $(G,\mathfrak g)$ is a nilpotent $k$-Lie pair. 
 
 Moreover, the underlying abelian group 
$\mathfrak g$ is also a quasi-$p$ group (since $\exp$ sends the closed subgroup generated 
by $X\in \mathfrak g$ to the closed subgroup generated by $x=\exp(X)\in G$). It follows 
that every character of $\mathfrak g$ takes its values in a cyclic group of order a power of $p$.
Thus if we let $\m:=\{\zeta\in \TT: \zeta^{p^m}=1\;\text{for some}\; m\in \NN\}$ (which is known as 
Pr\"ufer's $p$-group) equipped with the discrete topology, and if $\epsilon:\m\to \TT$ denotes the inclusion map,
then one easily checks that $(G,\mathfrak g)$ is $(\m,\epsilon)$-dualizable.  
Finally, since  the sum $\mathfrak a+\mathfrak b$
of two closed subgroups $\mathfrak a$ and $\mathfrak b$ in the 
abelian totally disconnected group $\G$ is always closed (since the sum of the intersection of both groups 
with a fixed compact open subgroup $\mathfrak c$ of $\G$ is compact and open in $\mathfrak a+\mathfrak b$),
we see that the Lie algebra $\mathfrak g$ is also regular. Thus our results apply to those 
groups and we recover most of the content of  \cite[Theorem II]{How}.
\end{example}

\begin{example}\label{example-unipotent-groups-over-F_p((t))} 
In what follows next, we want to consider unipotent groups over a local field $K$ of positive characteristic. 
Note that this means that $K$  is isomorphic to a function field $F_q((t))$, for some power  $q$ of $p$.
We denote by $Tr_0(n,K)$ (resp.  $Tr_1(n,K)$) the  set of upper triangular $n\times n$-matrices over $K$ with $0$'s (resp. $1$'s) on the diagonal. If $n< p:=\Char(K)$, one can check that the 
exponential map 
$$\exp: Tr_0(n,K)\to Tr_1(n,K); X\mapsto \exp(X)=\sum_{l=0}^n\frac{X^l}{l!}$$
is a bijection with inverse map $\log: Tr_1(n,K)\to Tr_0(n,K); \log(x)=\sum_{l=1}^n \frac{(-1)^l}{l+1} (1-x)^l$.
It is clear that $Tr_0(n,K)$ is a $\Lambda_k$-module for all $k<p$, so we see that
 $\big(Tr_1(n,K),Tr_0(n,K)\big)$ becomes a $k$-Lie pair for all 
$n\leq k<p$.  For any closed subgroup  $G$ 
of $Tr_1(n,K)$ let $\mathfrak g=\log(G)\subseteq Tr_0(n,K)$. Then $G$ is a quasi-$p$ group 
with Lie algebra $\mathfrak g$ as considered in the previous example and our results apply to
the pair $(G,\G)$.

Note that  every unipotent linear algebraic group $G$ over $K$ is isomorphic
to an algebraic subgroup of the upper triangular unipotent group
$Tr_1(n,K)$ for some $n \in \N$ (see \cite[Theorem 4.8]{Bor}). 
\end{example}

\begin{example}
Another important example of our approach is given by the class of countable torsion free discrete
divisible nilpotent groups, as considered by Carey, Moran and Pearce in \cite{CMP}. 
As explained in \cite[\S 2]{CMP}, if $G$ is such a group, then there exists a Lie algebra $\G$ 
over $\Q$ together with an exponential map $\exp:\G\to G$ with inverse map $\log: G\to\G$ which satisfy the Campbell-Hausdorff formula, so $(G,\G)$ is an $\infty$-Lie pair in our notation. 
Particular examples are given 
as follows:
Let $\frak g_{\R}$ be any real nilpotent Lie-algebra with base $\{X_1,\ldots, X_n\}$ and with rational structure coefficients with respect to this base.  Let $\G_{\Q}\subseteq \G_{\R}$ denote the 
$\Q$-vector space spanned by $X_1,\ldots, X_n$ and let $G_{\Q}=\exp(\G_{\Q})\subseteq G_{\R}$, where 
$G_{\R}$ denotes the simply connected and connected nilpotent Lie group corresponding to $\G_{\R}$.
Then $G_{\Q}$ is a countable, torsion free, and  divisible group with Lie algebra $\G_{\Q}$.

We need to show  that there exists a $\Q$-module 
$\frak m$ and a basic character $\epsilon :\frak m\to \mathbb{T}$ such that
the pair $(\frak m,\epsilon)$ satisfies the conditions of Defintion \ref{definition-k-Lie-pair}.
For this we recall that the dual group $\widehat{\Q}$ of $\Q$ can be identified 
with the compact group $\frak m:=A_{\Q}/\Q$, where $A_{\Q}$ is the group of adeles and $\Q$ 
is imbedded diagonally into $A_{\Q}$. The identification is done by
$$[a]\mapsto \epsilon_a\quad\text{with $\epsilon_a(\lambda)=\epsilon_{\infty}(\lambda a_{\infty})\prod_{p\in \P}\epsilon_p(\lambda a_p)$}$$
where $(a_{\infty}, a_2, a_3,\ldots)$ is any representative of $[a]$ in $A_{\Q}$ and $\epsilon_\infty$
is the basic character of $\R$ and $\epsilon_p$ is the basic character of $\Q_p$ for every prime $p\in \P$
(see Example \ref{example-Qp}).
We define $\epsilon: \m\to \mathbb{T}$ by 
$$\epsilon([a]):=\epsilon_a(1).$$
We claim that for any $\Q$-vector space $V$ (viewed as a discrete group), we 
obtain an isomorphism $V^*:=\Hom_{\Q}(V,\m)\cong \widehat{V}$ via $f\mapsto \epsilon\circ f$.
To see that this map is injective suppose that $\epsilon\circ f\equiv 1$ for some $f\in V^*$.
Fix $v\in V$. Then, for each $\lambda\in \Q$ we get
$$1=\epsilon(f(\lambda v))=\epsilon(\lambda\cdot f(v))=\epsilon_{f(v)}(\lambda)$$
so that $\epsilon_{f(v)}$ is the trivial character of $\Q$, which implies that $f(v)=0$.
For surjectivity let $\psi\in \widehat{V}$ and let $\{v_i: i\in I\}$ be a base of $V$.
For each $i\in I$ let $\chi_i:\Q\cdot v_i\to \TT$ denote the restriction of $\chi$ to $\Q\cdot v_i\subseteq V$
and let $[a_i]\in \m$ such that $\chi(\lambda v_i)=\psi_{a_i}(\lambda)$. Define 
$f\in \Hom(V,\m)$ by $f(v_i)=[a_i]$. Then, for $v=\sum_{i\in I}\lambda_iv_i\in V$ we get  
\begin{align*}
\epsilon\circ f(v)&=\epsilon(\sum_{i\in I} \lambda_i [a_i])=\prod_{i\in I}\epsilon([\lambda_i a_i])
= \prod_{i\in I}\psi_{[\lambda_i a_i]}(1)= \prod_{i\in I}\psi_{[a_i]}(\lambda_i)=\prod_{i\in I}\chi_i(\lambda_i v_i)\\
&=\chi(\sum_{i\in I} \lambda_iv_i)=\chi(v)
\end{align*}
It follows in particular that $\G^*\cong \Hom_{\Q}(\G,\m)$ for any Lie algebra $\G$ over $\Q$. 
Thus we see that if $\G$ is the Lie algebra of some torsion free divisible group $G$, then 
$(G,\G)$ is an $(\m,\epsilon)$-dualizable $\Q$-Lie pair. Since $G$ is discrete, it is clearly regular.
Thus we see that for all such groups the Kirillov-orbit map
$$\tilde\kappa: \G^*/\!\sim\to \Prim(C^*(G))$$
is a homeomorphism. This covers the main result of \cite{CMP}.
\end{example}

\section{Appendix on the Campbell-Hausdorff formula}\label{appendix}

In this appendix we present some details for the proofs of some results related to the Campbell-Hausdorff formula as used in this paper. In particular we want to present a proof of Theorem \ref{k-complete-subgroups-give-subalgebras}. we start with a general remark on the Campbell-Hausdorff formula:

\begin{remark}
Suppose that $(G,\G)$ is a $k$-Lie-pair.
For $X,Y\in \G$ we write $\ad(X)(Y):=[X,Y]$.
As part of the definition  we require that the Campbell-Hausdorff formula describes the multiplication
inside the group $G$ using the laws of the Lie algebra $\G$:
 If $X,Y \in \G$ then $\exp(X) \exp(Y)= \exp(Z)$, where the element
$Z= \log(\exp(X) \exp(Y))$ is of the form
\begin{equation*}
Z  =  \sum_{n=1}^l Z_n = \sum_{n=1}^l\left( \frac{1}{n}
\sum_{s+t=n}(Z'_{s,t}+ Z''_{s,t})\right),
\end{equation*}
where
\begin{eqnarray}\label{definition-C-H-formula-part1}
Z'_{s,t} & = & \sum_{
      \begin{smallmatrix}
        s_1+ \dots + s_m=s \\
        t_1+ \dots + t_{m-1}=t-1 \\
        s_i + t_i \geq 1 \; \forall i    \\
        s_m \geq 1
      \end{smallmatrix}}
 \frac{(-1)^{m+1}}{m} \frac{\ad(X)^{s_1} \ad(Y)^{t_1} \dots
 \ad(X)^{s_m}(Y)}{s_1 !t_1!\dots s_m!}
\end{eqnarray}
and
\begin{eqnarray}\label{definition-C-H-formula-part2}
Z''_{s,t} & = & \sum_{
      \begin{smallmatrix}
        s_1+ \dots + s_{m-1}=s \\
        t_1+ \dots + t_{m-1}=t \\
        s_i + t_i \geq 1 \; \forall i
      \end{smallmatrix}}
 \frac{(-1)^{m+1}}{m} \frac{\ad(X)^{s_1} \ad(Y)^{t_1} \dots
 \ad(Y)^{t_{m-1}}(X)}{s_1 !t_1!\dots  t_{m-1}!}.
\end{eqnarray}
Explicitly, the values of the first three homogeneous components
of $Z$ are
$$
Z_1= X+Y,\quad 
Z_2 = \frac{1}{2} [X,Y],\quad\text{and}\quad
Z_3 = \frac{1}{12}([X,[X,Y]]+ [Y,[Y,X]]).$$
Hence we have for all $X,Y \in \G$:
\[
(\exp X) (\exp Y)= \exp(X+Y+ \frac{1}{2}[X,Y]+
\frac{1}{12}[X,[X,Y]] + \frac{1}{12}[Y,[Y,X]]+ \cdots).
\]
A complete description of this formula can be found for example
in \cite{Bour}. Since the Lie algebra $\G$ is nilpotent, all
sums which appear above are finite.
\end{remark}

In what follows, a commutator $[X_1, \dots,X_m]$ of length $m \geq 1$ in $\G$ is defined
inductively by
\[
[X_1]:= X_1 \; \hbox{and} \; [X_1, \dots,X_m]:=[[X_1, \dots
,X_{m-1}],X_m], \quad X_i \in \G,\; i=1, \dots,m.
\]
and similarly, we define commutators  $(x_1; \dots;x_m)$ of length
$m \geq 1$ in $G$. The following important result follows from \cite[Theorem 6.1.6]{Jen}.

\begin{proposition}\label{prop-commutators-in-log=log-of-commutators}
Let $(G, \G)$ be a nilpotent $k$-Lie pair. If
$x_1, \dots, x_m \in G$ then
\begin{equation}\label{equation-commutators-in-log=log-of-commutators}
\log \bigl( (x_1; \dots; x_m) \bigr)= [\log(x_1), \dots,\log(x_m)]+
\sum_{j} F_j,
\end{equation}
where each $F_j$ is a $\Lambda_k$-linear
combination of commutators $[\log(x_{i_1}), \dots,
\log(x_{i_j})]$ of length $j>m$ and $i_l \in \{1,\dots,m \}$ for
$1 \leq l \leq j$, such that each of $1, \dots,m$ occurs at least
once among the $i_l$.
\end{proposition}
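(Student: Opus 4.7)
The plan is to prove the proposition by induction on the length $m$ of the group commutator. The base case $m=1$ is trivial since $\log(x_1) = [\log(x_1)]$ and there are no additional terms. For $m=2$, the statement is the assertion that $\log((x;y)) = [\log(x), \log(y)] + \text{higher},$ where the higher terms are $\Lambda_k$-linear combinations of commutators of length $j \geq 3$ in $\log(x), \log(y)$ in which both $\log(x)$ and $\log(y)$ occur. This follows by a direct (if tedious) computation using the Campbell-Hausdorff formula: one applies the formula step by step to
$$\log(xyx^{-1}y^{-1}) = \log\bigl(\exp(\log x)\exp(\log y)\exp(-\log x)\exp(-\log y)\bigr),$$
notes that the linear part vanishes (since the sum of the four logs is zero), and observes that the quadratic part of the Campbell-Hausdorff expansion reduces to exactly $[\log x,\log y]$; all higher homogeneous components are manifestly commutators of length $\geq 3$ and, because any such bracket that omits one of the two letters would contribute to the linear part of that letter and be cancelled, every remaining commutator contains both $\log(x)$ and $\log(y)$.

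For the inductive step, assume the statement holds for commutators of length less than $m$, and write
$(x_1;\ldots;x_m) = ((x_1;\ldots;x_{m-1});x_m).$
Set $a := (x_1;\ldots;x_{m-1})$ and $Y := \log(a)$. By the induction hypothesis,
$$Y = [\log(x_1),\ldots,\log(x_{m-1})] + \sum_{j} F_j,$$
where each $F_j$ is a $\Lambda_k$-linear combination of length-$j$ commutators in $\log(x_1),\ldots,\log(x_{m-1})$, $j > m-1$, with every index appearing. Applying the $m=2$ case to $(a;x_m)$ yields
$$\log\bigl((x_1;\ldots;x_m)\bigr) = [Y,\log(x_m)] + G,$$
where $G$ is a $\Lambda_k$-linear combination of commutators of length $\geq 3$ in $Y$ and $\log(x_m)$, with $\log(x_m)$ and at least one copy of $Y$ appearing.

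Now substitute the expression for $Y$. The leading piece $[Y,\log(x_m)]$ gives
$[[\log(x_1),\ldots,\log(x_{m-1})],\log(x_m)] = [\log(x_1),\ldots,\log(x_m)]$ plus terms $[F_j,\log(x_m)]$, which are commutators of length $j+1 > m$ in which all of $1,\ldots,m$ appear. Each term appearing in $G$ is multilinearly expanded by replacing each occurrence of $Y$ by $[\log(x_1),\ldots,\log(x_{m-1})] + \sum F_j$; since such a term in $G$ involves at least two copies of $Y$ (or one $Y$ together with $\log(x_m)$ appearing at least twice, raising the length similarly), the resulting commutators have length at least $2(m-1) + 1 = 2m-1 > m$ for $m \geq 2$, and each such bracket contains $\log(x_m)$ (from the $\log(x_m)$ factor in $G$) and each of $\log(x_1),\ldots,\log(x_{m-1})$ (since each occurrence of $Y$ expands to a sum whose summands each contain every index $1,\ldots,m-1$).

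The only genuine obstacle is the $m=2$ base case: one must verify not only that the expansion of $\log((x;y))$ via the Campbell-Hausdorff formula begins with $[\log x,\log y]$, but also that every higher-order commutator that survives contains both letters. This follows from the general principle that any multilinear Lie polynomial vanishing whenever one argument is set to zero cannot contain terms depending on a proper subset of the arguments, applied to the fact that $\log((x;y)) = 0$ whenever either $x = e$ or $y = e$. Given this base case, the inductive substitution is straightforward bookkeeping. For the detailed expansion one may alternatively refer to \cite[Theorem 6.1.6]{Jen}, which gives precisely the multilinearity and length estimates needed.
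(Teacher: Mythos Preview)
Your inductive argument is correct, and in fact provides more than the paper does: the paper simply records that the proposition ``follows from \cite[Theorem 6.1.6]{Jen}'' and gives no proof of its own. Your approach---reducing to the $m=2$ case via the recursion $(x_1;\ldots;x_m)=((x_1;\ldots;x_{m-1});x_m)$ and then substituting the inductive expression for $Y=\log(x_1;\ldots;x_{m-1})$ multilinearly---is the standard route and is essentially how Jennings proceeds as well.

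Two minor remarks. First, your length bound for the $G$-terms is stated a bit loosely: when a bracket in $G$ has exactly one occurrence of $Y$ and $t\geq 2$ occurrences of $\log(x_m)$, substitution yields length at least $(m-1)+t\geq m+1$, not $2m-1$; but since $m+1>m$ the conclusion is unaffected. Second, your concern that surviving brackets in the $m=2$ expansion contain both letters is actually automatic for length $\geq 2$: any iterated Lie bracket in a single element vanishes by antisymmetry, so once the linear part cancels, every remaining nonzero bracket in $\log(x),\log(y)$ necessarily involves both. The vanishing argument via $(x;e)=(e;y)=e$ is also fine, just not strictly needed here.
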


From this proposition we deduce

\begin{lemma}\label{lemma-log(h)+D=log(h')+sum-G_t}
Let $(G, \G)$ be a nilpotent $k$-Lie pair  and let
$H$ be a $k$-complete subgroup of $G$. Let $h \in H$, let $D$ be
any commutator of length $r \geq 1$ with entries in $\log(H)$,
and let $\mu \in \Lambda_k$. Then there exists an element
$h' \in H$ and there exist finitely many $\Lambda_k$-linear combinations, $G_t$, of
commutators of length $t \geq r+1$ with entries in $\log(H)$,
such that
\[
\log(h)+ \mu D= \log(h')+ \sum_{t} G_t.
\]
\end{lemma}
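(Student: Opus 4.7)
The plan is to realize the Lie-algebra element $\mu D$ as (an approximation of) the logarithm of a suitable group element lying in $H$, then absorb the remainder into higher-length commutators by expanding the Campbell-Hausdorff formula.

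Concretely, write $D = [\log(x_1),\dots,\log(x_r)]$ with $x_1,\dots,x_r \in H$ and set $c := (x_1;\dots;x_r) \in H$. By Proposition \ref{prop-commutators-in-log=log-of-commutators},
$$\log(c) = D + \sum_{j} F_j,$$
where each $F_j$ is a $\Lambda_k$-linear combination of commutators of length $j>r$ with entries in $\log(H)$. Since $H$ is $k$-complete, $c^{\mu}\in H$, and by Remark \ref{remark-f-in-Hom(G,W)-is-Lambda_k-linear}(2) we have $\log(c^{\mu})=\mu\log(c)=\mu D+\mu\sum_{j} F_j$. Now define $h':=h\cdot c^{\mu}\in H$ and apply the Campbell-Hausdorff formula to compute $\log(h')$.

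Writing $X:=\log(h)$ and $Y:=\log(c^{\mu})=\mu D+\mu\sum_{j}F_j$, the Campbell-Hausdorff formula gives
$$\log(h') = X + Y + \tfrac{1}{2}[X,Y] + \tfrac{1}{12}[X,[X,Y]] + \tfrac{1}{12}[Y,[Y,X]] + \cdots,$$
a finite sum (by nilpotence) of $\Lambda_k$-rational multiples of iterated commutators in $X$ and $Y$. Rearranging,
$$\log(h)+\mu D = \log(h') - \mu\sum_{j}F_j - \tfrac{1}{2}[X,Y] - \tfrac{1}{12}\bigl([X,[X,Y]] + [Y,[Y,X]]\bigr) - \cdots.$$
Expanding each bracket by $\Lambda_k$-bilinearity, $Y$ is a $\Lambda_k$-linear combination of $D$ and the $F_j$'s, which are themselves commutators of length $\geq r$ in $\log(H)$; hence every remaining term on the right-hand side, except $\log(h')$, is a $\Lambda_k$-linear combination of commutators of length $\geq r+1$ with entries in $\log(H)$. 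Grouping these by length yields the desired expression $\log(h)+\mu D = \log(h') + \sum_t G_t$.

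The only subtlety is bookkeeping the lengths: one must check that every iterated bracket $[X,[\cdots[X,Y]\cdots]]$ (and symmetric variants) in the Campbell-Hausdorff expansion, after substituting $Y=\mu D+\mu\sum_jF_j$ and expanding by bilinearity, produces a commutator in $\log(H)$ of length at least $r+1$. This is automatic: a single $X=\log(h)\in\log(H)$ bracketed against a commutator of length $s\geq r$ in $\log(H)$ gives a commutator of length $s+1\geq r+1$ in $\log(H)$, and inductively each further bracketing increases the length by one while keeping all entries in $\log(H)$. Nilpotence of $\G$ ensures the whole expansion is finite, so the sum $\sum_t G_t$ really is a finite $\Lambda_k$-combination of the required form.
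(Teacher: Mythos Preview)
Your argument is correct and follows essentially the same route as the paper's proof. The only cosmetic difference is where the scalar $\mu$ is absorbed: the paper pushes $\mu$ into the first entry via $\mu D=[\log(x_1^{\mu}),\dots,\log(x_r)]$ and works with the group commutator $(x_1^{\mu};\dots;x_r)$, whereas you form $c=(x_1;\dots;x_r)$ first and then use $c^{\mu}$; both invoke $k$-completeness of $H$ in the same way, and the subsequent Campbell--Hausdorff bookkeeping is identical.
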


\begin{proof}
Let $r \geq 1$ be fixed and let $D=[\log(x_1), \dots, \log(x_r)]$
for some $x_{i} \in H$, $i=1, \dots, r$. Since 
the commutator is $\Lambda_k$-bilinear, we obtain
\[
\mu [\log(x_1), \dots, \log(x_r)]= [\mu\log(x_{1}), \dots,
\log(x_{r})]= [\log(x_{1}^{\mu}), \dots,
\log(x_{r})],
\]
with  $x_{1}^{\mu} \in H$ since $H$ is $k$-complete. 
 Recall the Campbell-Hausdorff formula
\[
\log(xy)= \log(x)+ \log(y) + \sum_{t \geq 2} G_t,
\]
where each $G_t$ is a linear combination of commutators of length
$t \geq 2$ in $\log(x)$ and $\log(y)$. Applying this formula to
 $x=h$ and $y= (x_{1}^{\mu}; \dots;x_{r})$ yields
\begin{equation}\label{equation-C-H-for-h-and-G}
\log \bigl( h \cdot (x_{1}^{\mu}; \dots;x_{r}) \bigr)= \log(h)+
\log \bigl( (x_{1}^{\mu}; \dots;x_{r}) \bigr)+ \sum_t  G_t,
\end{equation}
where each $G_t$ is a linear combination of commutators of length
$t \geq 2$ in $\log(h)$ and $\log \bigl( (x_{1}^{\mu},
\dots,x_{r}) \bigr)$.  Proposition
\ref{prop-commutators-in-log=log-of-commutators} gives
\[
\log \bigl( (x_{1}^{\mu}; \dots ; x_{r}) \bigr) =[\log(x_{1}^{\mu}),
\dots,\log(x_{r})]+ \sum_s  F_{s},
\]
where each $F_{s}$ is a linear combination of commutators of
length $s>r$ in $\log(x_{1}^{\mu})$ and in $\log(x_{j})$, $j \in
\{2,\dots,r\}$. 
Thus every term $G_t$ in (\ref{equation-C-H-for-h-and-G}) is
in fact a commutator of length $t \geq r+1$ in $\log(h)$,
$\log(x_{1}^{\mu})$, and $\log(x_{j})$, $j \in \{2, \dots, r\}$
and we obtain
\begin{equation}\label{equation-step-h+D->h'+G_t}
\log(h)+ \log \bigl( (x_{1}^{\mu}; \dots; x_{r}) \bigr)  =  \log
\bigl( h \cdot (x_{1}^{\mu}; \dots;x_{r}) \bigr) + \sum_t G_t,
\end{equation}
where each $G_t$ is a linear combination of commutators of length
$t \geq r+1$ in $\log(h)$, $\log(x_{1}^{\mu})$, and
$\log(x_{j})$, $j \in \{2, \dots, r\}$. Since $h':=h
(x_{1}^{\mu}; \dots ; x_{r}) \in H$ the result follows.
\end{proof}

The next lemma completes the proof of Theorem
\ref{k-complete-subgroups-give-subalgebras}. A similar result can
be found in \cite[Chapter II, Exercises 6]{Bour}.

\begin{lemma}\label{lemma-k-complete-subgroup->subalgebra}
Let $(G, \G)$ be a nilpotent $k$-Lie pair, and let
$H$ be a $k$-complete subgroup of $G$. Then $\log(H)$ is a subalgebra 
of $\G$.  
\end{lemma}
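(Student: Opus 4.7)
The plan is to verify that $\log(H)$ is closed under scalar multiplication by $\Lambda_k$, under addition, and under the Lie bracket. Scalar closure is immediate: if $X = \log(x)$ with $x \in H$ and $\lambda \in \Lambda_k$, then $k$-completeness gives $x^{\lambda} \in H$, and by Remark \ref{remark-f-in-Hom(G,W)-is-Lambda_k-linear}(2) we have $\log(x^{\lambda}) = \lambda X \in \log(H)$.

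The core technical step is the following \emph{absorption claim}: for any $h_0 \in H$ and any finite $\Lambda_k$-linear combination $C$ of commutators of length $\geq 2$ with entries in $\log(H)$, the element $\log(h_0) + C$ belongs to $\log(H)$. Granting this, closure under addition and under the bracket follow directly from the Campbell-Hausdorff formula and Proposition \ref{prop-commutators-in-log=log-of-commutators}. Indeed, given $X, Y \in \log(H)$ with $x = \exp(X)$, $y = \exp(Y)$, Campbell-Hausdorff yields $\log(xy) = X + Y + C$ where $C$ is a $\Lambda_k$-linear combination of commutators of length $\geq 2$ in $X, Y \in \log(H)$, so $X + Y = \log(xy) + (-C) \in \log(H)$. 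Likewise, Proposition \ref{prop-commutators-in-log=log-of-commutators} applied to $(x;y)$ gives $\log((x;y)) = [X,Y] + C'$ where $C'$ is a sum of commutators of length $\geq 3$ in $X, Y$, whence $[X,Y] = \log((x;y)) + (-C') \in \log(H)$.

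To prove the absorption claim, I would run an induction controlled by the minimal length $r$ of commutators appearing in $C$. Pick one such commutator $\mu D$ of length $r \geq 2$ in $C$ and apply Lemma \ref{lemma-log(h)+D=log(h')+sum-G_t} to rewrite $\log(h_0) + \mu D = \log(h_1) + \sum_t G_t$, where $h_1 \in H$ and each $G_t$ is a $\Lambda_k$-linear combination of commutators of length $t \geq r+1$ with entries in $\log(H)$. Substituting back and absorbing the new terms into $C$, the resulting expression $\log(h_1) + C^{\mathrm{new}}$ either has one fewer commutator of length exactly $r$ or strictly larger minimal length. Since $\G$ is nilpotent of length $l$, all commutators of length $> l$ vanish, so only finitely many iterations are possible, and the process terminates with $\log(h_0) + C = \log(h^*)$ for some $h^* \in H$.

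The main obstacle is organizing the induction in the absorption step so that each application of Lemma \ref{lemma-log(h)+D=log(h')+sum-G_t} strictly decreases a well-founded invariant; the cleanest choice is the lexicographic pair (minimal commutator length appearing in $C$, number of commutators at that minimal length), with the first coordinate capped above by the nilpotence length $l$. Once this bookkeeping is in place, the argument is a direct application of the Campbell-Hausdorff expansions already collected in Proposition \ref{prop-commutators-in-log=log-of-commutators} and Lemma \ref{lemma-log(h)+D=log(h')+sum-G_t}.
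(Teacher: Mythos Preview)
Your proposal is correct and follows essentially the same route as the paper's proof: both arguments repeatedly invoke Lemma \ref{lemma-log(h)+D=log(h')+sum-G_t} to push the commutator tail to ever higher length until nilpotency kills it. The only cosmetic difference is that the paper kicks off the addition case by applying Lemma \ref{lemma-log(h)+D=log(h')+sum-G_t} with $r=1$ (taking $D=\log(y)$ as a length-one ``commutator''), whereas you invoke Campbell--Hausdorff directly to reach the same starting point; your explicit isolation of the absorption claim and the lexicographic termination invariant is a slightly cleaner packaging of the same iteration.
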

\begin{proof} 
Note first that $\lambda\log(x)=\log(x^\lambda)$ for all $\lambda\in \Lambda_k$ since 
$H$ is $k$-complete. We now show that $\log(H)$ is closed under addition. 
For this let $x,y\in H$. The above lemma implies that
$$\log(x)+\log(y)=\log(x_1)+\sum_t G_t$$
for some $x_1\in H$ and a finite sum $\sum_t G_t$ where $G_t$ is a $\lambda_k$-linear combination 
of commutators of length $t\geq 2$ with entries in $\log(H)$. Applying the same lemma again and again 
to each summand $\mu D$ of $G_t$ for $t=2$, we finally obtain an element $x_2\in H$ such that
$$\log(x)+\log(y)=\log(x_2)+\sum_s F_s$$
where each $G_t$ is a $\lambda_k$-linear combination 
of commutators of length $s\geq 3$ with entries in $\log(H)$.
After a finite number of steps we obtain an element $x_l\in H$ such that
$$\log(x)+\log(y)=\log(x_l)+\sum_r E_r$$
where each $E_r$ is a $\lambda_k$-linear combination of commutators of length $r\geq l+1$.
If $l$ is the nilpotence length of $\G$, then all those commutators $E_r$ vanish, and we 
get $\log(x)+\log(y)=\log(x_l)\in \log(H)$.

If we apply Lemma \ref{lemma-log(h)+D=log(h')+sum-G_t}
 to $h=1$ and $D=[\log(x),\log(y)]$, a similar argument shows that $[\log(x),\log(y)]\in \log(H)$
 for all $x,y\in H$, which finishes the proof.
\end{proof}

A similar proof as given in Lemma \ref{lemma-k-complete-subgroup->subalgebra} gives the
following 

\begin{corollary}\label{corollary-inversion-formulas}
For every nilpotent $k$-Lie pair $(G, \G)$ there
exist two different inversion formulas of the Campbell-Hausdorff
formula. One formula expresses the sum of two elements of
$\log(G)$ as an element of $\log(G)$:
\begin{equation}\label{inversion-C-H-sum}
\log(x) + \log(y)  =  \log \bigl( \prod_{m =1}^{k} C_m(x,y)
\bigr),
\end{equation}
where each $C_m(x,y)$ is a product of commutators $(z_1;
\dots;z_m)$ of length $m$ and where each $z_i$ is equal to some
rational power $\lambda \in \Lambda_k$ of some product in
$x$ and $y$. 

The other formula expresses the commutator of two elements of
$\log(G)$ as an element of $\log(G)$:
\begin{equation}\label{inversion-C-H-commutator}
[\log(x), \log(y)]= \log( \prod_{m = 2}^{k} D_m(x,y)),
\end{equation}
where each $D_m(x,y)$ is a product of commutators $(z_1;
\dots;z_m)$ of length $m$ and where each $z_i$ is equal to some
rational power by $\lambda \in \Lambda_k$ of either $x$ or
$y$. 
\end{corollary}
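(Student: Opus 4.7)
The plan is to mimic the iterative procedure employed in the proof of Lemma \ref{lemma-k-complete-subgroup->subalgebra}, but to keep careful track of the specific form of the group elements produced at each stage. Let $l \leq k$ denote the nilpotence length of $\G$, so that every $(l+1)$-fold commutator in $\G$ vanishes.

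For the sum formula \eqref{inversion-C-H-sum}, I would start by applying the Campbell-Hausdorff formula to $xy$ to get
$$\log(xy) = \log(x) + \log(y) + \sum_{t \geq 2} G_{t},$$
where each $G_{t}$ is a $\Lambda_k$-linear combination of commutators of length $t \geq 2$ with entries in $\{\log(x), \log(y)\}$. Rearranging, $\log(x) + \log(y) = \log(xy) - \sum_{t\geq 2} G_{t}$. Now apply Lemma \ref{lemma-log(h)+D=log(h')+sum-G_t} successively to each summand $\mu D$ of the length-$2$ terms (with $h = xy$), then to the length-$3$ terms of the resulting expression, and so on. At each stage the commutator terms that remain are of strictly greater length, and since all commutators of length $> l$ vanish, after at most $l - 1$ passes we obtain $\log(x) + \log(y) = \log(h_{*})$ for some $h_{*} \in G$. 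Unraveling the iteration, $h_{*}$ is a product of elements of the form $(z_1;\dots;z_m)$ with $2 \leq m \leq l$; grouping factors by commutator length yields $C_m(x,y)$ for $m\geq 2$ and the leading factor $C_1(x,y) = xy$.

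For the commutator formula \eqref{inversion-C-H-commutator}, I would apply Lemma \ref{lemma-log(h)+D=log(h')+sum-G_t} with $h = 1_G$, $\mu = 1$ and $D = [\log(x), \log(y)]$. This produces $h_1 = (x;y) \in G$ such that
$$[\log(x), \log(y)] = \log((x;y)) + \sum_{t \geq 3} G_t,$$
where each $G_t$ is a $\Lambda_k$-linear combination of commutators of length $t \geq 3$ with entries in $\{\log(x), \log(y)\}$. Iterating as in the first part, the lengths of the residual commutators strictly increase at each pass, and after at most $l - 2$ further steps they all vanish, producing an identity of the form $[\log(x),\log(y)] = \log(\prod_m D_m(x,y))$ as claimed.

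The termination of the scheme is automatic from the nilpotence of $\G$, so the main obstacle is the bookkeeping certifying that the entries $z_i$ in $C_m(x,y)$ and $D_m(x,y)$ have the stated form. This is controlled by the structure of Lemma \ref{lemma-log(h)+D=log(h')+sum-G_t}: passing from $\log(h) + \mu D$ to $\log(h')$ only introduces the single new factor $x_1^{\mu}$ alongside entries $x_2, \dots, x_r$ that were already present in the commutator $D$. For the sum formula, the pool of entries is initially $\{x, y\}$ but each new $G_t$ involves their products, so the $z_i$ end up being rational powers of arbitrary products in $x$ and $y$. For the commutator formula, one checks inductively that the pool of entries remains $\{x, y\}$ itself (never their products), so each $z_i$ is a rational power of either $x$ or $y$ alone. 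This finer distinction is the only point requiring care.
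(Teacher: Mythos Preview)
Your proposal is correct and follows precisely the approach the paper indicates: the paper gives no explicit proof for this corollary, only the remark that ``a similar proof as given in Lemma~\ref{lemma-k-complete-subgroup->subalgebra}'' works, and your iterative application of Lemma~\ref{lemma-log(h)+D=log(h')+sum-G_t} with the bookkeeping on the shape of the entries~$z_i$ is exactly that. The one point you gloss over slightly---that for the commutator formula the pool of Lie-algebra entries remains $\{\log(x),\log(y)\}$ even though $\log(h_j)$ enters the new $G_t$'s at each stage---is handled by expanding $\log(h_j)$ via Proposition~\ref{prop-commutators-in-log=log-of-commutators} (since $h_j$ is itself a product of group commutators of length $\geq 2$, its $\log$ is a sum of Lie commutators of length $\geq 2$ in $\log(x),\log(y)$, and bilinearity pushes everything back into the desired pool with increased length); this is worth making explicit but does not change the argument.
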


The proof of the following proposition follows  from the above formulas in the usual way and is omitted.

\begin{proposition}\label{prop-log(normal-complete-subgroups)=ideal}
Let $(G, \G)$ be a nilpotent $k$-Lie pair. Then the assignment 
$\mathfrak{n}\mapsto N:=\exp(\mathfrak{n})$ is a bijection between the set of 
closed ideals $\n$ in $\G$ and exponentiable normal subgroups $N$ of $G$.
Moreover, if $N=\exp(\n)$ is an exponentiable normal subgroup of $G$, then 
$(G/N, \G/\n)$ is a nilpotent $k$-Lie pair. 
%
\end{proposition}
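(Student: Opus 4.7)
My plan is to build on Theorem \ref{k-complete-subgroups-give-subalgebras}, which already gives a bijection $\h \leftrightarrow H = \exp(\h)$ between closed subalgebras of $\G$ and exponentiable subgroups of $G$. It then suffices to check that under this correspondence the property ``$\h$ is an ideal'' matches up with the property ``$H$ is normal,'' and to verify that the quotient inherits the structure of a nilpotent $k$-Lie pair.

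For the first direction, suppose $\n$ is a closed ideal and let $N = \exp(\n)$. For $x = \exp(X) \in G$ and $y = \exp(Y) \in N$, formula (\ref{eq-Ad}) gives
\[
xyx^{-1} = \exp(\Ad(x)(Y)) = \exp\Big(\sum_{n=0}^{l}\tfrac{1}{n!}\ad(X)^n(Y)\Big).
\]
Each $\ad(X)^n(Y)$ lies in $\n$ since $\n$ is an ideal, and each scalar $\tfrac{1}{n!}$ lies in $\Lambda_k$ because $n \leq l \leq k$; hence $\Ad(x)(Y) \in \n$ and $xyx^{-1} \in N$, so $N$ is normal.

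The reverse direction is the main technical step. Given an exponentiable normal $N$, set $\n = \log(N)$, which is already a closed subalgebra by Theorem \ref{k-complete-subgroups-give-subalgebras}. I must show $[X, Y] \in \n$ whenever $X \in \G$ and $Y \in \n$, and for this I would invoke the commutator inversion formula (\ref{inversion-C-H-commutator}) of Corollary \ref{corollary-inversion-formulas},
\[
[X, Y] = \log\Big(\prod_{m=2}^{k} D_m(x, y)\Big),
\]
where each $D_m(x, y)$ is a product of group commutators $(z_1; \dots; z_m)$ whose entries $z_i$ are $\Lambda_k$-powers of $x$ or $y$. The claim is that each such commutator lies in $N$: if all $z_i$ are powers of $x$ alone (or of $y$ alone) they pairwise commute, since $x^\lambda x^\mu = x^{\lambda + \mu}$, and the commutator is trivial; otherwise at least one $z_i$ is a $\Lambda_k$-power of $y$, hence lies in $N$ by $k$-completeness, and a short induction on $m$ based on the identity $(a; b) = (aba^{-1}) b^{-1}$ together with normality of $N$ shows $(z_1; \dots; z_m) \in N$. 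Consequently the product lies in $N$ and $[X, Y] \in \log(N) = \n$.

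Finally, for the quotient assertion, $G/N$ is a second countable locally compact group and $\G/\n$ inherits from $\G$ a $\Lambda_k$-module structure and a nilpotent Lie-algebra structure of length at most $l \leq k$. The exponential descends to a well-defined map $\exp_{G/N}: \G/\n \to G/N$, because for $Z \in \n$ the BCH formula expresses $\exp(X + Z)$ as $\exp(X)$ times an element whose logarithm lies in $\n$ (all mixed commutator terms are in $\n$ since $\n$ is an ideal). This descended $\exp_{G/N}$ is a homeomorphism whose inverse is induced by $\log$, and the BCH identity on $G/N$ descends from the one on $G$. I expect the main obstacle to be the commutator step in the converse direction, where one must extract from the inversion formula that every commutator expression there respects normality of $N$.
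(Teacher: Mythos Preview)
Your proposal is correct and follows exactly the route the paper indicates: the paper omits the proof, stating only that it ``follows from the above formulas in the usual way,'' where ``the above formulas'' are precisely the inversion formulas (\ref{inversion-C-H-sum}) and (\ref{inversion-C-H-commutator}) of Corollary~\ref{corollary-inversion-formulas} that you invoke for the normal $\Rightarrow$ ideal direction. Your use of (\ref{eq-Ad}) for the ideal $\Rightarrow$ normal direction and the descent argument for the quotient are the standard ingredients the paper has in mind.
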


We  are now going to show that certain characteristic subgroups 
of $G$ are exponentiable. We start with another important consequence of 
Proposition \ref{prop-commutators-in-log=log-of-commutators}. Of course, the proof is the same 
as in the of ordinary Lie groups, but for completeness, we give the arguments.

\begin{lemma}\label{lem-commutator}
Suppose that $(G,\G)$ is a nilpotent $k$-Lie pair.  Let $x=\exp(X)$ and $y=\exp(Y)$ 
for some $X,Y\in \G$. Then
$(x;y)=1$ if and only if  $[X,Y]=0$. In particular, we have
$Z(G)=\exp(\z(\G))$, where $Z(G)$ and $\z(\G)$ denote the centers of $G$ and $\G$, respectively,
and therefore $Z(G)$ is an exponentiable subgroup of $G$.
\end{lemma}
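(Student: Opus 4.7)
The plan is to apply Proposition \ref{prop-commutators-in-log=log-of-commutators} with $m=2$, $x_1=x$, $x_2=y$, which gives
\begin{equation*}
\log\bigl((x;y)\bigr)\;=\;[X,Y]+\sum_{j\geq 3} F_j,
\end{equation*}
where each $F_j$ is a $\Lambda_k$-linear combination of left-normed commutators $[W_{i_1},\ldots,W_{i_j}]$ of length $j\geq 3$ with $W_1=X$, $W_2=Y$, and each of the indices $1,2$ appearing at least once among the $i_l$. The key observation is that such a left-normed commutator equals $[[W_{i_1},W_{i_2}],W_{i_3},\ldots,W_{i_j}]$, which vanishes if $i_1=i_2$ and otherwise equals $\pm[[X,Y],W_{i_3},\ldots,W_{i_j}]$; in either case it can be written as an iterated bracket of $[X,Y]$ with elements of $\G$.

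For the direction ``$[X,Y]=0\Rightarrow(x;y)=1$'', the observation shows $F_j=0$ for every $j\geq 3$, whence $\log((x;y))=0$ and $(x;y)=1$. For the converse, let $\G^{(1)}=\G$ and $\G^{(r+1)}=[\G,\G^{(r)}]$ denote the lower central series of $\G$; since $\G$ has nilpotence length $l\leq k$, we have $\G^{(l+1)}=0$. Assuming $(x;y)=1$, I would prove by induction on $r\geq 2$ that $[X,Y]\in\G^{(r)}$. The base case $r=2$ is trivial. If $[X,Y]\in\G^{(r)}$, then by the above observation together with the standard inclusion $[\G^{(s)},\G^{(t)}]\subseteq\G^{(s+t)}$, every term of $F_j$ lies in $\G^{(r+(j-2))}\subseteq\G^{(r+1)}$ since $j\geq 3$. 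Reducing the identity $0=[X,Y]+\sum_j F_j$ modulo $\G^{(r+1)}$ then yields $[X,Y]\in\G^{(r+1)}$. After finitely many steps we reach $[X,Y]\in\G^{(l+1)}=0$.

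For the statement about the center, the equivalence just proved shows that $x=\exp(X)\in Z(G)$ iff $[X,Y]=0$ for every $Y\in\G$, i.e., iff $X\in\z(\G)$, so $Z(G)=\exp(\z(\G))$. Since the bracket is $\Lambda_k$-bilinear by Remark \ref{remark-f-in-Hom(G,W)-is-Lambda_k-linear}, $\z(\G)$ is a closed $\Lambda_k$-submodule of $\G$ and hence a subalgebra in the sense of this paper, so $Z(G)$ is exponentiable by definition. The genuinely nontrivial point is the converse in the main equivalence, where the higher-order remainder terms $F_j$ a priori obstruct concluding $[X,Y]=0$ from $\log((x;y))=0$; the induction along the lower central series removes this obstruction precisely because each such term is itself an iterated bracket involving $[X,Y]$.
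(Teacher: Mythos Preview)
Your proof is correct, and takes a genuinely different route from the paper's argument for the converse implication.

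The paper applies Proposition~\ref{prop-commutators-in-log=log-of-commutators} at \emph{every} length $m\leq l$ (not just $m=2$): from $(x;y)=1$ it first notes that all group commutators $(z_1;\ldots;z_m)$ with $z_i\in\{x,y\}$ are trivial, and then runs a \emph{downward} induction on the length of Lie commutators in $X,Y$. At length $m=l$ the remainder terms $F_j$ (length $>l$) vanish by nilpotence, forcing $[Z_1,\ldots,Z_l]=\log((z_1;\ldots;z_l))=0$; this feeds into the step $m=l-1$, and so on until $m=2$. Your argument, by contrast, uses only the $m=2$ instance of the proposition and exploits the specific left-normed shape of the remainder terms: since each monomial in $F_j$ necessarily begins with $[W_{i_1},W_{i_2}]=\pm[X,Y]$ (or is zero), you can push $[X,Y]$ down the lower central series by an \emph{upward} induction on $r$. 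Your approach is slightly more economical---one invocation of the proposition rather than $l-1$---at the cost of needing to inspect the structure of the terms $F_j$; the paper's approach is more mechanical but applies the proposition as a black box at each level. Both lead to the same conclusion with comparable effort.
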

\begin{proof} If $[X,Y]=0$, then so are all higher commutators in $X$ and $Y$, and it follows then from
Proposition \ref{prop-commutators-in-log=log-of-commutators} that $(x;y)=\exp(0)=1$.
Conversely, assume that $(x;y)=1$. Then the same is true for
all higher group commutators in $x$ and $y$. 
Let $m$ be the nilpotence length of $\G$. 
Then all commutators of length
$m+1$ in $\G$ vanish. 
Suppose now that $[Z_1,\ldots, Z_m]$ is a commutator of length $m$ with $Z_i\in \{X,Y\}$ 
for all $1\leq i\leq m$. Then Proposition \ref{prop-commutators-in-log=log-of-commutators} implies that
$$[Z_1,\ldots, Z_m]=\log(z_1;\ldots;z_m)+\sum_j F_j$$
with $z_i=\exp(Z_i)$ for all $1\leq i\leq m$ and with each $F_j$ a $\Lambda_k$-linear combination of commutator in $X,Y$ of length $>m$. 
But then all $F_j$ vanish, and since $(z_1;\ldots;z_m)=1$ it follows that $[Z_1,\ldots, Z_m]=0$.
The same argument then shows that commutators of length $m-1$ in $X,Y$ vanish, and by 
induction we then  see that all commutators of length $l\geq 2$ in $X,Y$ vanish. In particular,
it follows that $[X,Y]=0$.
The last assertion follows from the fact that $\z(\G)$ is an ideal in $\G$.
\end{proof}

\begin{lemma}\label{lemma-comparing-the-central-series}
Let $(G, \G)$ be a nilpotent $k$-Lie pair of
nilpotence length $l$. Then
$$
\exp(\mathfrak{z}^{i}(\G)) = Z^{i}(G) \quad \forall \;i=1,\dots,l,
$$
where $\mathfrak{z}^{i}(\G)$ denotes the $i$th element of the
ascending central series of $\G$ and $Z^{i}(G)$ denotes the $i$th
element of the ascending central series of $G$. In particular,
$Z^{i}(G)$ is exponentiable for every $i=1,\dots,l$.
\end{lemma}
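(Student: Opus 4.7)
The plan is to proceed by induction on $i$. The base case $i = 1$ is precisely the content of Lemma \ref{lem-commutator}: there we showed $Z(G) = \exp(\z(\G))$, and by definition $Z^1(G) = Z(G)$ and $\z^1(\G) = \z(\G)$.

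For the inductive step, assume $Z^i(G) = \exp(\z^i(\G))$ for some $1 \leq i < l$. Since $\z^i(\G)$ is an ideal of $\G$, Proposition \ref{prop-log(normal-complete-subgroups)=ideal} guarantees that $Z^i(G)$ is an exponentiable normal subgroup of $G$ and that the quotient pair $(G/Z^i(G), \G/\z^i(\G))$ is again a nilpotent $k$-Lie pair, with exponential map $\overline{\exp}$ induced from $\exp$ via the quotient maps. Recall that by definition of the ascending central series, $\z^{i+1}(\G)/\z^i(\G)$ is the center of $\G/\z^i(\G)$, and likewise $Z^{i+1}(G)/Z^i(G)$ is the center of $G/Z^i(G)$.

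Applying the base case (Lemma \ref{lem-commutator}) to the $k$-Lie pair $(G/Z^i(G), \G/\z^i(\G))$, we obtain
\[
\overline{\exp}\bigl(\z^{i+1}(\G)/\z^i(\G)\bigr) = Z\bigl(G/Z^i(G)\bigr) = Z^{i+1}(G)/Z^i(G).
\]
Since the diagram relating $\exp$ and $\overline{\exp}$ via the quotient maps commutes, and since $\exp(\z^i(\G)) = Z^i(G)$ by the inductive hypothesis, the preimage of $Z^{i+1}(G)/Z^i(G)$ in $G$ is exactly $\exp(\z^{i+1}(\G))$, yielding $Z^{i+1}(G) = \exp(\z^{i+1}(\G))$. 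The last assertion, that $Z^i(G)$ is exponentiable, follows immediately since $\z^i(\G)$ is (in particular) a subalgebra of $\G$.

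I do not expect any real obstacle here: the argument is a straightforward induction, and the only point requiring care is the compatibility of $\exp$ with passing to the quotient, which is already packaged in Proposition \ref{prop-log(normal-complete-subgroups)=ideal}.
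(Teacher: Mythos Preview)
Your proof is correct and is precisely the argument the paper has in mind: the paper's proof is a one-line reference to Lemma~\ref{lem-commutator} and Proposition~\ref{prop-log(normal-complete-subgroups)=ideal}, and your induction is the natural way to unpack that reference.
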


\begin{proof} 
This is an easy consequence of Lemma \ref{lem-commutator} and Proposition
 \ref{prop-log(normal-complete-subgroups)=ideal}.
\end{proof}

\begin{lemma}\label{lemma-commutator-subgroup-is-exponentiable-subgroup}
Let  $(G,\G)$ be a nilpotent $k$-Lie pair and let $G':=\overline{(G;G)}$ denote the 
closed commutator subgroup of $G$ and let $\G'=\overline{[\G,\G]}$ the closed commutator subalgebra of $\G$. Then 
$G'=\exp(\G')$.
\end{lemma}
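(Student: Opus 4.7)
The plan is to prove the two inclusions $\exp(\G') \subseteq G'$ and $G' \subseteq \exp(\G')$ separately, using the Campbell--Hausdorff formula (Proposition \ref{prop-commutators-in-log=log-of-commutators}) and its inversion (Corollary \ref{corollary-inversion-formulas}) as the two ``bridges'' between Lie brackets and group commutators. The key observation is that $\G'$ is a closed ideal of $\G$: the subspace $[\G,\G]$ is an ideal by the Jacobi identity, and its closure remains an ideal since the bracket is continuous. Hence, by Proposition \ref{prop-log(normal-complete-subgroups)=ideal}, $\exp(\G')$ is automatically a closed normal subgroup of $G$ and $(G/\exp(\G'),\G/\G')$ is a nilpotent $k$-Lie pair. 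Similarly, since $G'$ is closed by definition and $\exp$ is a homeomorphism, $\log(G')$ is a closed subset of $\G$.

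For the inclusion $\exp(\G') \subseteq G'$, I would use the second inversion formula \eqref{inversion-C-H-commutator}: for any $x,y \in G$,
\[
[\log(x),\log(y)] \;=\; \log\!\left( \prod_{m=2}^{k} D_m(x,y) \right),
\]
where each $D_m(x,y)$ is a product of commutators of length $\geq 2$ in rational powers of $x$ and $y$. These commutators lie in $(G;G) \subseteq G'$, so $[\log(x),\log(y)] \in \log(G')$. As $x,y$ range over $G$, this shows $[\G,\G] \subseteq \log(G')$; taking closures and using that $\log(G')$ is closed yields $\G' \subseteq \log(G')$, i.e.\ $\exp(\G') \subseteq G'$.

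For the reverse inclusion $G' \subseteq \exp(\G')$, I would invoke Proposition \ref{prop-commutators-in-log=log-of-commutators} with $m=2$: for $x,y \in G$,
\[
\log\bigl((x;y)\bigr) \;=\; [\log(x),\log(y)] + \sum_j F_j,
\]
where each $F_j$ is a $\Lambda_k$-linear combination of commutators of length $\geq 3$ in $\log(x)$ and $\log(y)$. Every term on the right-hand side lies in $[\G,\G] \subseteq \G'$, so $(x;y) \in \exp(\G')$. Since $\exp(\G')$ is a (closed) subgroup of $G$ and $(G;G)$ is by definition the subgroup generated by all such commutators, we obtain $(G;G) \subseteq \exp(\G')$; closedness of $\exp(\G')$ then gives $G' = \overline{(G;G)} \subseteq \exp(\G')$. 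No serious obstacle is expected: the only mild subtlety is to verify that $\G'$ is indeed an ideal so that Proposition \ref{prop-log(normal-complete-subgroups)=ideal} applies and $\exp(\G')$ is closed, but this is immediate from continuity of the bracket.
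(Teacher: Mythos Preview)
Your proposal is correct and follows essentially the same route as the paper: both use Proposition~\ref{prop-commutators-in-log=log-of-commutators} to get $G'\subseteq\exp(\G')$ and equation~\eqref{inversion-C-H-commutator} for the reverse inclusion, together with the observation that $\G'$ is a closed ideal so that $\exp(\G')$ is a closed normal subgroup. Your write-up is in fact more explicit than the paper's three-line sketch; the only step you (and the paper) pass over quickly is why the set $\log(G')$ is closed under $\Lambda_k$-linear combinations so that the simple brackets $[X,Y]\in\log(G')$ imply $[\G,\G]\subseteq\log(G')$, but this is easily handled via the first inversion formula~\eqref{inversion-C-H-sum} and bilinearity of the bracket.
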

\begin{proof} It follows from Proposition \ref{prop-commutators-in-log=log-of-commutators}
that $(G;G)\subseteq \exp([\G,\G])$  which implies $\overline{(G;G)}\subseteq \exp(\overline{[\G,\G]})$.
The converse follows from equation (\ref{inversion-C-H-commutator}). The result then follows from 
the easy fact that $\overline{[\G,\G]}$ is an ideal in $\G$.
\end{proof}

In view of Proposition \ref{A-N-proposition}, the following lemma is certainly very useful:

\begin{lemma}\label{lemma-log(A)-is-a-subalgebra}
Let $k \geq 2$ and let $(G, \G)$ be a nilpotent $k$-Lie pair of
nilpotence length $l \geq 2$. Then  $A\mapsto \log(A)=:\a$ 
gives a bijective correspondence between the maximal abelian subgroups $A$ of $Z_2(G)$ and the maximal 
abelian subalgebras $\a$ of $\z_2(\G)$. In particular, every maximal abelian subgroup of $Z_2(G)$ is exponentiable.

Moreover, if $N$ is the centralizer of a maximal abelian subgroup $A$ of $Z_2(G)$, then
$N$ is exponentiable and  $\n=\log(N)$ is the centralizer of $\a=\log(A)$ in $\G$.
\end{lemma}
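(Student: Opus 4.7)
The plan is to derive everything from Lemma \ref{lem-commutator}, which provides the dictionary ``$x$ and $y$ commute in $G$ iff $[\log x,\log y]=0$ in $\G$'', together with Theorem \ref{k-complete-subgroups-give-subalgebras}. By Lemma \ref{lemma-comparing-the-central-series} we already know $Z_2(G)=\exp(\z_2(\G))$, so throughout we work inside the $k$-Lie pair $(Z_2(G),\z_2(\G))$.

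First I would show that any maximal abelian subgroup $A\subseteq Z_2(G)$ is $k$-complete, hence exponentiable by Theorem \ref{k-complete-subgroups-give-subalgebras}. Fix $a=\exp(X)\in A$ and $\lambda\in\Lambda_k$; then $a^\lambda=\exp(\lambda X)$. For every $b=\exp(Y)\in A$, the relation $ab=ba$ together with Lemma \ref{lem-commutator} gives $[X,Y]=0$, whence $[\lambda X,Y]=0$, and applying Lemma \ref{lem-commutator} again shows $a^\lambda b=ba^\lambda$. Thus the group generated by $A$ and $a^\lambda$ is still abelian and lies in $Z_2(G)$, so maximality of $A$ forces $a^\lambda\in A$. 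By Theorem \ref{k-complete-subgroups-give-subalgebras}, $\a:=\log(A)$ is a subalgebra of $\z_2(\G)$, and Lemma \ref{lem-commutator} shows directly that $\a$ is abelian.

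Next, I would verify that $A\mapsto\log(A)$ and $\a\mapsto\exp(\a)$ are mutually inverse bijections between the two sets of maximal objects. Given any abelian subalgebra $\a\subseteq\z_2(\G)$, the set $\exp(\a)$ is closed under multiplication and inversion by the Campbell--Hausdorff formula applied to commuting elements, so $\exp(\a)$ is a subgroup of $Z_2(G)$; it is abelian by Lemma \ref{lem-commutator}. Since $\exp$ is a homeomorphism, inclusions of abelian subalgebras in $\z_2(\G)$ correspond to inclusions of abelian subgroups in $Z_2(G)$, and maximality is preserved in both directions. Combined with the previous paragraph (showing every maximal abelian $A$ equals $\exp(\log A)$ with $\log A$ maximal abelian), this gives the claimed bijection and exponentiability of every such $A$.

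Finally, for the centralizer statement let $A$ be maximal abelian in $Z_2(G)$ with $\a=\log(A)\subseteq\z_2(\G)$, and let $\tilde\n$ denote the centralizer of $\a$ in $\G$. One checks readily that $\tilde\n$ is a closed $\Lambda_k$-subalgebra of $\G$ (it is a $\Lambda_k$-submodule by $\Lambda_k$-bilinearity of the bracket, closed under brackets by the Jacobi identity, and closed because $[\cdot,Y]$ is continuous for each $Y\in\a$). Now for $x=\exp(X)\in G$ we have $x\in N$ iff $x$ commutes with every $a=\exp(Y)\in A$, which by Lemma \ref{lem-commutator} is equivalent to $[X,Y]=0$ for all $Y\in\a$, i.e.\ to $X\in\tilde\n$. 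Hence $N=\exp(\tilde\n)$, which simultaneously shows that $N$ is exponentiable and that $\log(N)=\tilde\n$ is the centralizer of $\a$ in $\G$. No serious obstacle is expected; the only point requiring mild care is the verification that $\tilde\n$ is a closed $\Lambda_k$-subalgebra, which proceeds by the standard Jacobi-identity argument.
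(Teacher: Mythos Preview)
Your proof is correct and follows essentially the same route as the paper's (very brief) argument, which simply cites Lemma~\ref{lem-commutator} and Lemma~\ref{lemma-comparing-the-central-series} for the first assertion and Lemma~\ref{lem-commutator} for the second. Your write-up just fills in the details. One small remark: the detour through $k$-completeness and Theorem~\ref{k-complete-subgroups-give-subalgebras} is not strictly needed, since for an \emph{abelian} subgroup $A$ the Campbell--Hausdorff formula collapses to $\exp(X)\exp(Y)=\exp(X+Y)$, so $\log(A)$ is already closed under addition, and your maximality argument then gives the $\Lambda_k$-module structure directly; but your route is perfectly valid.
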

\begin{proof} The first assertion is an easy consequence of Lemma \ref{lem-commutator} together with 
Lemma \ref{lemma-comparing-the-central-series}. The second assertion is a consequence of 
Lemma \ref{lem-commutator}  and the fact that
$\n$ is a subalgebra of $\G$.
\end{proof}

\end{document}